\titleformat{\section}[block]{\large\scshape\bfseries\filcenter}{\thesection.}{1em}{}		
\titleformat{\subsection}[hang]{\large\scshape\bfseries}{\thesubsection}{1em}{}			
\titleformat{\subsubsection}[hang]{\large\scshape\bfseries}{\thesubsubsection}{1em}{}			
\newcolumntype{M}[1]{>{\centering\arraybackslash}m{#1}}
\newcolumntype{N}{@{}m{0pt}@{}}
\newtheorem{lemma}{Lemma}[section]
\newtheorem{theorem}[lemma]{Theorem}
\newtheorem{prop}[lemma]{Proposition}
\newtheorem{cor}[lemma]{Corollary}
\newtheorem{conj}[lemma]{Conjecture}
\newtheorem{defn}[lemma]{Definition} 
\newtheorem{notation}[lemma]{Notation} 
\newtheorem{question}[lemma]{Question}
\newtheorem{theoremalpha}{Theorem}
\newtheorem{corollaryalpha}[theoremalpha]{Corollary}
\theoremstyle{remark}
\newtheorem{remark}[lemma]{Remark}
\newtheorem{example}[lemma]{Example}
\newcommand{\HF}{\operatorname{HF}}
\newcommand{\Span}{\operatorname{span}}
\newcommand{\img}{\operatorname{img}}
\newcommand{\Sym}{\operatorname{Sym}} 
\newcommand{\supp}{\operatorname{supp}}
\newcommand{\Cox}{\operatorname{Cox}}
\newcommand{\remd}{\operatorname{remd}}
\newcommand{\indeg}{\operatorname{index.deg}}
\newcommand{\moddeg}{\operatorname{mod.deg}}
\newcommand{\doot}{\bullet}
\newcommand{\Alt}{\bigwedge\nolimits}
\renewcommand{\aa}{\mathbf a}
\newcommand{\bb}{\mathbf b}
\newcommand{\dd}{\mathbf d}
\newcommand{\vv}{\mathbf v}
\newcommand{\ww}{\mathbf w}
\newcommand{\xx}{\mathbf x}
\newcommand{\yy}{\mathbf y}
\newcommand{\rr}{\mathbf r}
\newcommand{\nn}{\mathbf n}
\newcommand{\fF}{\mathbf F}
\newcommand{\one}{\mathbf 1}
\newcommand{\zero}{\mathbf 0}
\renewcommand{\H}{\operatorname{H}}
\newcommand{\cI}{\mathcal{I}}
\renewcommand{\O}{\mathcal{O}}
\newcommand{\K}{\mathbb{K}}
\renewcommand{\L}{\mathbb{L}}
\newcommand{\N}{\mathbb{N}}
\renewcommand{\P}{\mathbb{P}}
\newcommand{\R}{\mathbb{R}}
\newcommand{\Z}{\mathbb{Z}}
\newcommand{\n}{\mathfrak{n}}
\renewcommand{\R}{\mathfrak{R}}
\renewcommand{\L}{\mathfrak{L}}
\title{Asymptotic Syzygies in the Setting of Semi-Ample Growth}
\author{Juliette Bruce}
\address{Department of Mathematics, University of Wisconsin, Madison, WI}
\email{\href{mailto:juliette.bruce@math.wisc.edu}{juliette.bruce@math.wisc.edu}}
\urladdr{\url{http://math.wisc.edu/~juliettebruce/}}
\thanks{The author was partially supported by the NSF GRFP under Grant No. DGE-1256259.}
\subjclass[2010]{13D02, 14M25}
\begin{document}

\begin{abstract}
We study the asymptotic non-vanishing of syzygies for products of projective spaces. Generalizing the monomial methods of Ein, Erman, and Lazarsfeld \cite{einErmanLazarsfeld16} we give an explicit range in which the graded Betti numbers of $\P^{n_1}\times \P^{n_2}$ embedded by $\O_{\P^{n_1}\times\P^{n_2}}(d_1,d_2)$ are non-zero. These bounds provide the first example of how the asymptotic syzygies of a smooth projective variety whose embedding line bundle grows in a semi-ample fashion behave in nuanced and previously unseen ways. 
 \end{abstract}

\maketitle


\setcounter{section}{1}

The goal of this paper is to initiate the study of the asymptotic behavior of the syzygies of a smooth projective variety as the embedding line bundle grows in a semi-ample fashion. We show that for the prototypical example of such varieties, the product of two projective spaces, the asymptotic behavior is more complicated and nuanced than in the case when the positivity grows in an ample fashion. In particular, we show that the non-vanishing theorems of Ein and Lazarsfeld and others \cites{conca18,einLazarsfeld12,einErmanLazarsfeld16,ermanYang18} do not describe the non-vanishing syzygies of products of projective space in the setting of semi-ample asymptotics.

More specifically, fix $\nn=(n_1,n_2)\in \Z_{\geq1}^2$ and set $\P^{\nn}:=\P^{n_1}\times\P^{n_2}$. Given $\bb=(b_1,b_2)\in \Z^2$, we let
\[
\O_{\P^{\nn}}(\bb)\coloneqq \pi_{1}^{*}\O_{\P^{n_1}}(b_1)\otimes \pi_{2}^{*}\O_{\P^{n_2}}(b_2) \,,
\]
where $\pi_i$ is the projection from $\P^{\nn}$ to $\P^{n_i}$. If $\dd\in \Z_{\geq1}^2$ then $\O_{\P^{\nn}}(\dd)$ is very ample, and so defines an embedding:
\[
\begin{tikzcd}[column sep = 3 em, row sep = 4em]
\P^{\nn}=\P^{n_1}\times\P^{n_2} \arrow[rr,"\iota_{\dd}"]& &\P \H^0\left(\P^{\nn}, \O_{\P^{\nn}}(\dd)\right)\cong\P^{r_{\nn,\dd}}
\end{tikzcd}.
\]
We call this the \textbf{$\dd$-uple Segre-Veronese map}. We are interested in studying the asymptotic behavior of the syzygies of $\P^{\nn}$ under the $(d_1,d_2)$-uple Segre-Veronese embedding as $d_1$ or $d_2$ goes to infinity. More generally, following the work of Green \cites{green84-I,green84-II}, we also study the syzygies of other line bundles on $\P^{\nn}$, as this often provides a more unified perspective, see for example \cite[Theorem~2.2]{green84-II}, ~\cite[Theorem~2]{einLazarsfeld93}, and \cite[Theorem~4.1]{einLazarsfeld12}. Thus, let
\[
S(\bb;\dd)=\bigoplus_{k\in \Z}H^0(\P^{r_{\nn,\dd}}, (\iota_{\dd})_{*}(\O_{\P^{\nn}}(\bb)(k)))
\]
be the graded section ring of the pushforward of $\O_{\P^{\nn}}(\bb)$ along the map $\iota_{\dd}$. We consider $S(\bb;\dd)$ as an $R$-module where $R=\Sym H^0\left(\P^{\nn}, \O_{\P^{\nn}}(\dd)\right)$ is the homogeneous coordinate ring of $\P^{r_{\nn,\dd}}$. Note if $\dd\gg0$ then the line bundle $\O_{\P^n}(\dd)$ will be normally generated, in which case $M(\zero;\dd)$ is isomorphic to the homogeneous coordinate ring of $\P^{\nn}$ as a subvariety of $\P^{r_{\nn,\dd}}$.  

\begin{remark}
Using the description of the $\dd$-uple Segre-Veronese map given above, one sees that
\[
r_{\nn,\dd}\coloneqq\binom{d_1+n_1}{n_1}\binom{d_2+n_2}{n_2}-1\in O\left(d_1^{n_1}d_2^{n_2}\right).
\]
Throughout the paper, we will use big-O notation for multivariate functions as follows: if $f$ and $g$ are $\mathbb{R}$-valued functions defined on some domain $U\subset \mathbb{R}^{n}$, then we write $f(\xx)\in O(g(\xx))$ as $\xx\to\infty$ if and only if there exists constants $C>0$ and $M>0$ such that $|f(\xx)|\leq C|g(\xx)|$ for all $\xx\in U$, with $\|\xx\|_{\infty}\geq M$. 
\end{remark}

By studying syzygies of $\P^{\nn}$, we mean studying the minimal graded free resolution of $S(\bb;\dd)$ as an $R$-module. The Hilbert Syzygy Theorem \cite[Theorem~1.1]{eisenbud05} implies that the minimal graded free resolution of $S(\bb;\dd)$ over $R$ is of the form
\[
\begin{tikzcd}[column sep = 3em]
0 & \lar S(\bb;\dd) & \lar F_{0} & \lar F_{1} & \lar \cdots & \cdots & \lar F_{r_{\nn,\dd}} & \lar 0\,,
\end{tikzcd}
\]
where $F_{p}$ is a finitely generated graded free $R$-module. If we let
\[
K_{p,q}\left(\nn,\bb;\dd\right):=\Span_{\K}~\left\langle
\begin{matrix}
~\text{minimal generators of $F_p$} ~\\
\text{of degree $(p+q)$}
\end{matrix}
 \right\rangle
\]
be the finite dimensional vector space of minimal syzygies of homological degree $p$ and degree $(p+q)$, then $F_{p}$ is isomorphic to $\bigoplus_{q}K_{p,q}\left(\nn,\bb;\dd\right)\otimes_{\K}R\left(-(p+q)\right)$. When $\bb=\zero$, we often write $K_{p,q}(\nn;\dd)$ for $K_{p,q}(\nn,\zero;\dd)$.  The main question we are interested in is the following.

\begin{question}\label{quest:main}
If $\dd\gg0$, then for what $p$ and $q$ is $K_{p,q}(\nn,\bb;\dd)\neq0$?
\end{question}

Considerations of Castelnuovo-Mumford regularity imply that if $\dd\gg0$ then $K_{p,q}(\nn,\bb;\dd)$ for all $q>|\nn|=n_1+n_2$. Thus, Question~\ref{quest:main} is of primary interest when $1\leq q\leq|\nn|$. (See Proposition~\ref{prop:regularity} for a precise description of how large $\dd$ must be, relative to $\bb$ and $\nn$, for $K_{p,q}(\nn,\bb;\dd)$ for all $q>|\nn|=n_1+n_2$.)

As a running example, consider $\P^1\times\P^1$ when $\bb=(0,0)$ and $\dd=(d_1,d_2)$. If both $d_1,d_2\to\infty$, then the work of Ein and Lazarsfeld provides an answer to Question~\ref{quest:main} \cite{einLazarsfeld12}. For example, Ein and Lazarsfeld's work implies that if $d_1,d_2\to\infty$ then $K_{p,2}(\nn,\zero;\dd)\neq0$ for $100\%$ of possible $p$'s  \cite[Theorem A]{einLazarsfeld12}. That said, Ein and Lazarsfeld's results require the embedding line bundle to grow in an ample fashion, i.e. the embedding bundle needs to be of the form $B+dA$ where $A$ is ample. Thus, if we fix either $d_1$ or $d_2$, then Ein and Lazarsfeld's non-vanishing results no longer apply. 

For example, if we fix $d_2$ and allow only $d_1\to\infty$, then the embedding line bundle $\O_{\P^{\nn}}(d_1,d_2)$ grows like $\O_{\P^{\nn}}(0,1)$, which is semi-ample. Recall a line bundle $L$ on a smooth variety is \textbf{semi-ample} if the complete linear series $|kL|$ is base point free for some $k\geq1$. The prototypical example of semi-ample line bundles are $\O_{\P^{\nn}}(1,0)$ and $\O_{\P^{\nn}}(0,1)$ on $\P^{\nn}$.

The difference between the cases of ample and semi-ample growth can be visualized if we view the sequence of embedding line bundles as a sequence of points inside the nef cone of $\P^1\times \P^1$.  
\begin{center}
\begin{figure}[H]
\begin{tikzpicture}[xscale=0.38, yscale=0.38]
\draw[->, very thick] (3,0) -- (16,0);
\draw[->, very thick] (3,0) -- (3,13);
\draw (2,0) node{};
\draw (3.7,9,5) node{\footnotesize $\mathcal{O}(0,1)$};
\draw (3,-.5) node{};
\draw (9.5,-.75) node{\footnotesize $\mathcal{O}(1,0)$};
\draw (17,-.5) node{};
\draw[->,very thick,dotted,teal] (5,3)--(13,6.5);
\node [diamond, draw, fill=black,scale=0.4] (a2) at (5,3) {};
\node [diamond, draw, fill=black,scale=0.4] (a2) at (7.5,4.09375) {};
\node [diamond, draw, fill=black,scale=0.4] (a2) at (9.5,4.96875) {};

\draw[->,very thick,dotted,violet] (6,1)--(14,1 );
\node [circle, draw, fill=black,scale=0.4] (a2) at (6,1) {};
\node [circle, draw, fill=black,scale=0.4] (a2) at (8.5,1) {};
\node [circle, draw, fill=black,scale=0.4] (a2) at (11,1) {};
\end{tikzpicture}
\end{figure}
\end{center}
The case covered by Ein and Lazarsfeld corresponds to the sequence of points going to infinity along a line of positive slope, for example, the sequence of points (diamonds) along the teal line. The case of semi-ample growth not covered by Ein and Lazarsfeld's results corresponds to the sequence of points going to infinity along a ray parallel to one of the axes. For example, see the points (circles) along the purple line. 

Interestingly, the syzygies of $\P^1\times\P^1$ in the semi-ample case behave differently than in the ample case. For instance, it is no longer true that in the limit $K_{p,2}((1,1);\dd)\neq0$ for $100\%$ of possible $p$'s. More precisely, following the notation of \cite{ermanYang18}, set
\[
\rho_q(\nn;\dd)\coloneqq\frac{\#\left\{p\in\N |\; \big| \; K_{p,q}(\nn;\dd)\neq0\right\}}{r_{\dd}}\,,
\]
which by the Hilbert Syzygy Theorem is the percentage of degrees in which non-zero syzygies appear \cite[Theorem~1.1]{eisenbud05}. A result of Lemmens's \cite{lemmens18} implies that:
\begin{align*}
\lim_{d_1\to\infty}\rho_2((1,1);\dd)=\lim_{d_1\to\infty}\quad\begin{matrix}
\text{\% of $p$ where}\\
K_{p,2}((1,1);\dd)\neq0
\end{matrix} \quad&=1-\frac{2}{d_2+1}\,.
\end{align*}
Thus, syzygies in the setting of semi-ample growth can behave differently than is suggested by the work of Ein and Lazarsfeld \cite{einLazarsfeld12,einErmanLazarsfeld16}. Further, the fact that this limit is not zero shows that syzygies in the case of semi-ample growth also do not behave as suggested by Green's work on syzygies of curves \cite{green84-I,green84-II}. Hence the asymptotic behavior of syzygies under semi-ample growth is not controlled by either the dimension or the Iitaka dimension of the embedding line bundle.

Our main result is the following, which, given $q$, gives a range for $p$ for which these vector spaces of syzygies, $K_{p,q}(\nn;\dd)$, are non-zero. 

\begin{theoremalpha}\label{thm:main}
Fix $\nn=(n_1,n_2)\in \Z_{\geq1}^{2}$, $\dd=(d_1,d_2)\in \Z_{>1}^2$, and an index $1\leq q\leq |\nn|$. If $d_1>q$ and $d_2>q$ then $K_{p,q}(\nn;\dd)\neq0$ for all $p$ in the range:
\[
\min\left\{\binom{d_1+i}{i}\binom{d_2+j}{j}\right\}_{\substack{i+j=q \\ 0\leq i \leq n_1 \\ 0\leq j \leq n_2}}-(q+2) \leq p \leq r_{\nn,\dd}-\min\left\{\binom{d_1+n_1-i}{n_1-i}\binom{d_2+n_2-j}{n_2-j}\right\}_{\substack{i+j=q \\ 0\leq i \leq n_1 \\ 0\leq j \leq n_2}}-(|\nn|+1).
\]
\end{theoremalpha}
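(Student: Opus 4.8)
The plan is to follow the monomial/combinatorial strategy of Ein--Erman--Lazarsfeld \cite{einErmanLazarsfeld16}, adapted to the bigraded setting. The key observation is that the Segre-Veronese embedding $\iota_{\dd}$ is equivariant for the torus action, so $K_{p,q}(\nn;\dd)$ carries a $\Z^{n_1+1}\times\Z^{n_2+1}$-multigrading, and it suffices to exhibit a single monomial multidegree $\aa$ in which the Koszul cohomology $K_{p,q}(\nn;\dd)_{\aa}$ is nonzero. Concretely, I would work with the Koszul-type complex computing these groups and reduce, via a simplicial/combinatorial interpretation (the ``$\aa$-squarefree'' reduction of \cite{einErmanLazarsfeld16}), the nonvanishing of $K_{p,q}(\nn;\dd)_{\aa}$ for a suitable $\aa$ to the nonvanishing of a reduced (co)homology group of a simplicial complex built from the lattice points of the dilated simplex $d_1\Delta^{n_1}\times d_2\Delta^{n_2}$, or equivalently to a rank computation for a Koszul differential restricted to that multidegree.

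First I would set up notation for the monomial basis: $\H^0(\P^{\nn},\O_{\P^{\nn}}(\dd))$ has a basis indexed by pairs $(\uu,\vv)$ with $\uu\in\Z^{n_1+1}_{\geq 0}$, $|\uu|=d_1$ and $\vv\in\Z^{n_2+1}_{\geq0}$, $|\vv|=d_2$, so $r_{\nn,\dd}+1=\binom{d_1+n_1}{n_1}\binom{d_2+n_2}{n_2}$. Next, mirroring \cite[\S 2--3]{einErmanLazarsfeld16}, I would identify the ``extremal'' multidegrees: for an index split $i+j=q$ with $0\le i\le n_1$, $0\le j\le n_2$, the relevant sub-configuration is a coordinate subspace $\P^{i}\times\P^{j}\subset\P^{\nn}$ twisted appropriately, whose section space has dimension $\binom{d_1+i}{i}\binom{d_2+j}{j}$; this is exactly where the minima in the statement come from. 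The left-hand bound should come from producing syzygies of the ``linear strand type'' supported near such a subspace — one gets $K_{p,q}\neq 0$ for $p$ at least the number of variables cut out minus a small constant (the $-(q+2)$), by an explicit Koszul cycle construction. The right-hand bound should then follow by the duality $K_{p,q}(\nn;\dd)\cong K_{r_{\nn,\dd}-n-1-p,\,|\nn|+1-q}(\nn,\omega;\dd)^{\vee}$ (a bigraded avatar of \cite[Theorem~2.c.6]{green84-I} / the duality used in \cite{einErmanLazarsfeld16}), applied with the complementary index split $(n_1-i)+(n_2-j)=|\nn|-q+1$ after matching indices, which converts the lower bound for the ``dual'' $K$ into the stated upper bound with the $-\min\{\binom{d_1+n_1-i}{n_1-i}\binom{d_2+n_2-j}{n_2-j}\}-(|\nn|+1)$ term.

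The heart of the argument — and the step I expect to be the main obstacle — is establishing the nonvanishing of $K_{p,q}(\nn;\dd)_{\aa}\neq 0$ for all $p$ in an \emph{interval} rather than at a single value of $p$. In the EEL setup over a single projective space, interval nonvanishing follows from the fact that, after fixing the multidegree, the Koszul differential's behavior is governed by a simplicial complex whose reduced homology is supported in a range of dimensions; here the complex is a join/product of simplices (coming from the two factors), and one must show that the cohomology of the associated complex — or the failure of exactness of the restricted Koszul complex — persists across the whole predicted range. The conditions $d_1>q$ and $d_2>q$ are presumably exactly what is needed to guarantee enough ``room'' in each factor for the combinatorial configuration to be nondegenerate (so that the relevant simplicial complex is neither a full simplex nor a boundary sphere in the wrong dimension). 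I would handle this by choosing $\aa$ so that the restriction of $S(\zero;\dd)$ to multidegree $\aa$ is a tensor product of two ``bracket''-type configurations, reducing to a Künneth-type computation, and then invoking (or re-deriving) the relevant nonvanishing for each factor from \cite{einErmanLazarsfeld16}; the subtlety is that the homological degree $p$ does not split as cleanly as the index $q$ does across the two factors, so some care is needed to show that every $p$ in the interval is realized by \emph{some} admissible split $i+j=q$ together with a valid choice of Koszul cycle, rather than only the endpoints.

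Finally, I would assemble the pieces: verify that the explicit extremal cycles are genuinely minimal (not boundaries), which uses minimality of the resolution together with the multigrading to rule out cancellation; check the two endpoint computations match the formula by a direct count of lattice points; and confirm the regularity input (Proposition~\ref{prop:regularity} and the discussion that $K_{p,q}=0$ for $q>|\nn|$) is consistent so that the duality used in the upper-bound step is applied in the correct range $1\le |\nn|+1-q\le |\nn|$. I would expect the calculation that the left bound $\min\{\binom{d_1+i}{i}\binom{d_2+j}{j}\}-(q+2)$ is actually nonnegative (so the interval is nonempty) under the hypotheses $d_1,d_2>q$ to be a short but necessary sanity check included at the end.
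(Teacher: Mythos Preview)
Your approach diverges substantially from the paper's and contains a genuine gap at the upper-bound step. First, a correction: \cite{einErmanLazarsfeld16} does not proceed via a multigraded/simplicial reduction; it quotients by the monomial regular sequence $x_0^d,\ldots,x_n^d$ and then exhibits explicit Koszul cycles $m_1\wedge\cdots\wedge m_p\otimes f$ with the $m_i$ monomial annihilators of a chosen $f$. Both the lower \emph{and} upper bounds there come from counting such annihilators, not from duality. The present paper generalizes exactly this scheme: because no monomial regular sequence of length $|\nn|+1$ exists on $\P^{n_1}\times\P^{n_2}$, Sections~\ref{sec:reg-seq}--\ref{sec:ideal-membership} construct a non-monomial regular sequence $g_0,\ldots,g_{|\nn|}$ and analyze ideal membership for $\R=\langle g_0,\ldots,g_{|\nn|}\rangle$ via two auxiliary gradings (the ``modular'' and ``index weighted'' degrees). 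The interval of nonvanishing is then obtained by building a single nontrivial class on the subvariety $\P^{q-k}\times\P^{k}$ (Theorem~\ref{thm:special}), lifting it to $\P^{\nn}$ via the quotient map (Lemma~\ref{lem:technical-1}), and wedging on further monomial annihilators of $f_{q,k}$ (Lemma~\ref{lem:technical-2} and Proposition~\ref{prop:linear-annhilators}); the upper bound is the annihilator count $\#Z(f_{q,k})$, computed by the Hilbert-function estimate of Lemma~\ref{lem:hilbert-function-bound}. No duality is invoked.

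The concrete failure in your sketch is the duality step. Green's duality gives $K_{p,q}(\nn;\dd)\cong K_{r_{\nn,\dd}-|\nn|-p,\ |\nn|+1-q}(\nn,K_{\P^{\nn}};\dd)^{\vee}$ with $K_{\P^{\nn}}=\O(-n_1-1,-n_2-1)$. To turn this into your upper bound you would need a lower-bound nonvanishing for $K_{p',|\nn|+1-q}$ with the \emph{negative} twist $\bb=(-n_1-1,-n_2-1)$, which your linear-strand cycle construction (stated only for $\bb=\zero$) does not supply; the paper's own Theorem~\ref{thm:main2} is restricted to $\bb\in\Z^2_{\geq0}$ for exactly this reason. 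Even granting that, the bookkeeping fails: under $i'=n_1-i$, $j'=n_2-j$ the dual constraint $i'+j'=|\nn|+1-q$ becomes $i+j=q-1$, not $i+j=q$, so you minimize over the wrong index set and the additive constant does not come out to $-(|\nn|+1)$; and since the dual row $|\nn|+1-q$ can equal $|\nn|$, your hypothesis $d_1,d_2>q$ would have to be strengthened to $d_1,d_2>|\nn|$. Finally, the proposed ``K\"unneth'' route to interval nonvanishing is not available as stated: $\bigwedge^p(V\otimes W)$ is not a tensor product of exterior powers, so the Segre--Veronese Koszul cohomology does not factor through the Koszul cohomologies of the two Veronese factors, and the homological index $p$ genuinely cannot be split across the factors.
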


Notice that these bounds depend on both $d_1$ and $d_2$. In particular, that asymptotic behavior is dependent, in a nuanced way, on the relationship between $d_1$ and $d_2$. Again, this underscores the complicated asymptotic behaviors possible for syzygies under semi-ample growth. 

In order to highlight this behavior, and explain the terms appearing in the bounds of Theorem~\ref{thm:main}, let us consider what can occur when $q=2$. In this case, assuming $n_1,n_2\geq2$, the main terms of the bounds in Theorem~\ref{thm:main} can be written as 
\begin{equation*}\label{intro:example}
\min\left\{
\frac{d_1^2}{2}, \;
d_1d_1,  \;
\frac{d_2^2}{2}
\right\}-O\left(\begin{matrix}\text{lower ord.}\\ \text{terms}\end{matrix}\right)
\leq p\leq r_{\nn,\dd}-\min\left\{ 
\frac{d_1^{n_1}d_2^{n_2-2}}{n_1!(n_2-2)!}, \;
\frac{d_1^{n_1-1}d_2^{n_2-1}}{(n_1-1)!(n_2-1)!} , \;
\frac{d_1^{n_1-2}d_2^{n_2}}{(n_1-2)!n_2!} 
\right\}
-O\left(\begin{matrix}\text{lower ord.}\\ \text{terms}\end{matrix}\right).
\end{equation*}
Focusing our attention on the upper bounds, we see that there are roughly three cases. If $d_1\gg d_2$, we expect the upper bound to be approximately $r_{\nn,\dd}-Cd_1^{n_1-2}d_2^{n_2}$ where $C$ is a constant. On the other hand, if $d_1\sim d_2$, then the upper bound is roughly $r_{\nn,\dd}-C'd_1^{n_1-1}d_2^{n_2-1}$ for some constant $C'$. Finally, if $d_2\gg d_1$, we expect the upper bound to be approximately $r_{\nn,\dd}-C''d_1^{n_1}d_2^{n_2-2}$ for some constant $C''$. 

For larger $q$, the number of cases, and the distinctions between them, become much more complicated. We propose the following rough heuristic for thinking about the bounds appearing in Theorem~\ref{thm:main}. The lower bounds reflect the asymptotic syzygies of restricting $\O_{\P^{\nn}}(\dd)$ to $\P^{i}\times\P^{q-i}\subset \P^{\nn}$ as $i$ varies. Similarly, the upper bounds reflect asymptotic syzygies of restricting $\O_{\P^{\nn}}(\dd)$ to $\P^{n_1-i}\times\P^{n_2-j}\subset \P^{\nn}$ for $i+j=q$. 

In fact, when proving Theorem~\ref{thm:main} we explicitly construct non-trivial syzygies in the given ranges, and in a sense, these syzygies naturally live on subvarieties of the form $\P^{i}\times\P^{j}\subset \P^{\nn}$ where $i+j=q$. This can be seen in a technical way in that we deduce Theorem~\ref{thm:main} from Theorem~\ref{thm:special} via a lifting argument. 

As an immediate corollary of Theorem~\ref{thm:main}, and a generalization of the example of $\P^1\times\P^1$ discussed above, we are able to provide a lower bound on the percentage of degrees in which non-zero syzygies asymptotically appear. In the following corollary, we let $C_{i,j}=\frac{n_1!n_2!}{(n_1-i)!(n_2-j)!}$ and $D_{i,j}=\frac{n_1!n_2!}{i!j!}$.

\begin{corollaryalpha}\label{cor:main}
Fix $\nn=(n_1,n_2)\in \Z_{\geq1}^{2}$, $\dd=(d_1,d_2)\in \Z_{>1}^2$, and an index $1\leq q\leq |\nn|$. If $d_1>q$ and $d_2>q$ then
\begin{align*}
\rho_q(\nn;\dd)\geq1-\sum_{\substack{i+j=q \\ 0\leq i \leq n_1 \\ 0\leq j \leq n_2}}\left(
\frac{C_{i,j}}{d_1^id_2^j}+\frac{D_{i,j}}{d_1^{n_1-i}d_2^{n_2-j}}+O\left(\frac{d_1+d_2}{d_1^{i+1}d_2^{j+1}}+\frac{d_1+d_2}{d_1^{n_1-i+1}d_2^{n_2-j+1}}\right)
\right)\,.
\end{align*}
\end{corollaryalpha}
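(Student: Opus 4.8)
\textbf{Proof proposal for Corollary~\ref{cor:main}.}

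The plan is to deduce the corollary directly from Theorem~\ref{thm:main} by a counting argument, together with the regularity bound of Proposition~\ref{prop:regularity}. First I would note that $\rho_q(\nn;\dd)$ is, by definition, the fraction of homological degrees $p \in \{0,1,\dots,r_{\nn,\dd}\}$ for which $K_{p,q}(\nn;\dd) \neq 0$. Theorem~\ref{thm:main} hands us an explicit interval $[L_q, U_q]$ (with $L_q$ the left-hand min minus $(q+2)$ and $U_q = r_{\nn,\dd}$ minus the right-hand min minus $(|\nn|+1)$) on which every $K_{p,q}$ is non-zero, so immediately
\[
\rho_q(\nn;\dd) \;\geq\; \frac{U_q - L_q + 1}{r_{\nn,\dd}+1} \;\geq\; \frac{U_q - L_q}{r_{\nn,\dd}}.
\]
The whole game is then to estimate $U_q - L_q$ from below and $r_{\nn,\dd}$ from above, and to package the error terms in the stated big-$O$ form. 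Writing $r_{\nn,\dd} = \binom{d_1+n_1}{n_1}\binom{d_2+n_2}{n_2}-1$, we have $r_{\nn,\dd} - (U_q-L_q) \leq \min\{\cdots\}_{\text{left}} + \min\{\cdots\}_{\text{right}} + (q+2)+(|\nn|+1)$, and since the two minima are each $O(d_1^{n_1}d_2^{n_2})$ while $r_{\nn,\dd}$ grows exactly like $d_1^{n_1}d_2^{n_2}/(n_1!\,n_2!)$, the constants $q+2$ and $|\nn|+1$ are absorbed into the error.

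Next I would carry out the asymptotic expansion of each minimum. For a fixed pair $(i,j)$ with $i+j=q$, $0\le i\le n_1$, $0\le j\le n_2$, one has the standard expansion $\binom{d_1+i}{i}\binom{d_2+j}{j} = \tfrac{d_1^i d_2^j}{i!\,j!}\big(1 + O(1/d_1) + O(1/d_2)\big) = \tfrac{d_1^i d_2^j}{i!\,j!} + O\!\big(d_1^{i-1}d_2^{j} + d_1^{i}d_2^{j-1}\big)$, and similarly for the $(n_1-i,n_2-j)$ terms. Dividing the inequality $\rho_q \geq 1 - \big(\min_{\text{left}} + \min_{\text{right}} + O(1)\big)/r_{\nn,\dd}$ through, and using $1/r_{\nn,\dd} = \tfrac{n_1!\,n_2!}{d_1^{n_1}d_2^{n_2}}\big(1+O(1/d_1)+O(1/d_2)\big)$, the term $\min_{\text{left}}/r_{\nn,\dd}$ is bounded above by $\binom{d_1+i}{i}\binom{d_2+j}{j}/r_{\nn,\dd}$ for \emph{each} admissible $(i,j)$ — in particular it is bounded by the sum over all such $(i,j)$ — which contributes $\sum_{i+j=q} \tfrac{n_1!\,n_2!}{i!\,j!\,d_1^{n_1-i}d_2^{n_2-j}} + O(\cdots) = \sum \tfrac{D_{i,j}}{d_1^{n_1-i}d_2^{n_2-j}} + O(\cdots)$; wait — here one must be careful about the direction of the index substitution, since $C_{i,j} = \tfrac{n_1!n_2!}{(n_1-i)!(n_2-j)!}$ pairs naturally with $d_1^i d_2^j$ after dividing by $r_{\nn,\dd}$, whereas $D_{i,j} = \tfrac{n_1!n_2!}{i!j!}$ pairs with $d_1^{n_1-i}d_2^{n_2-j}$. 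Reindexing the right-hand (upper-bound) minimum via $(i,j) \mapsto (n_1-i, n_2-j)$ — which is a bijection on the index set since $i+j=q$ forces $(n_1-i)+(n_2-j) = |\nn|-q$ on a complementary set, so one instead keeps the minimum as is and divides $\binom{d_1+n_1-i}{n_1-i}\binom{d_2+n_2-j}{n_2-j}$ by $r_{\nn,\dd}$ to get $\tfrac{n_1!\,n_2!}{(n_1-i)!(n_2-j)!\,d_1^i d_2^j} = \tfrac{C_{i,j}}{d_1^i d_2^j}$ up to lower-order terms. Bounding each of the two minima by the full sum over the common index set $\{i+j=q\}$ then yields exactly the two families of main terms $\tfrac{C_{i,j}}{d_1^i d_2^j}$ and $\tfrac{D_{i,j}}{d_1^{n_1-i}d_2^{n_2-j}}$ appearing in the statement.

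Finally I would collect the error terms. Each ratio $\binom{d_1+i}{i}\binom{d_2+j}{j}/r_{\nn,\dd}$ differs from $C_{i,j}/(d_1^i d_2^j)$ (resp.\ $D_{i,j}/(d_1^{n_1-i}d_2^{n_2-j})$) by a quantity of size $O\!\big(\tfrac{1}{d_1}+\tfrac{1}{d_2}\big)$ times the main term, i.e.\ $O\!\big(\tfrac{d_1+d_2}{d_1^{i+1}d_2^{j+1}}\big)$ and $O\!\big(\tfrac{d_1+d_2}{d_1^{n_1-i+1}d_2^{n_2-j+1}}\big)$ respectively; the additive constants $q+2$, $|\nn|+1$ from Theorem~\ref{thm:main} divided by $r_{\nn,\dd}$ are of size $O(d_1^{-n_1}d_2^{-n_2})$, hence dominated by these, and since the number of admissible $(i,j)$ is at most $q+1$, a fixed constant, the sum of errors is again of the stated form. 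Assembling these estimates gives the displayed inequality. I do not expect any genuine obstacle here — the content is entirely in Theorem~\ref{thm:main}, and this corollary is a bookkeeping exercise; the one place demanding care is getting the index substitution right so that $C_{i,j}$ lands on $d_1^i d_2^j$ and $D_{i,j}$ on $d_1^{n_1-i}d_2^{n_2-j}$, and confirming that bounding a minimum by the corresponding full sum (rather than by a single term) is what produces the summation in the statement.
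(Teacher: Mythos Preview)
Your proposal is correct and follows essentially the same approach as the paper: apply Theorem~\ref{thm:main} to get the non-vanishing interval, divide by $r_{\nn,\dd}$, bound each minimum above by the full sum over $\{i+j=q\}$, and then expand $\binom{d+n}{n}=d^n/n!+O(d^{n-1})$ to extract the constants $C_{i,j}$ and $D_{i,j}$ together with the stated error terms. The reference to Proposition~\ref{prop:regularity} is unnecessary---the paper's proof does not use it, and indeed nothing about regularity is needed once Theorem~\ref{thm:main} is in hand---but otherwise your outline matches the paper's argument step for step.
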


\begin{example}
If we let $\nn=(1,5)$ and $q=2$, then by Corollary~\ref{cor:main} we see that
\[
\rho_2((1,5);\dd)\geq1-\frac{20}{d_2^2}-\frac{60}{d_1d_2^3}-\frac{5}{d_1d_2}-\frac{120}{d_2^4}-O\left(\begin{matrix}\text{lower ord.}\\ \text{terms}\end{matrix}\right)\,.
\]
In particular, if $d_2$ is fixed and $d_1\to\infty$, then the limit of $\rho_q(\nn;\dd)$ is greater than or equal to $1-\frac{20}{d^2_2}-\frac{120}{d_2^4}$.
\end{example}

In the setting of ample growth, this recovers the results of Ein and Lazarsfeld: namely, if both $d_1\to\infty$ and $d_2\to\infty$, then $\rho_q(\nn;\dd)\to1$. At the other extreme, if $d_2$ is fixed and only $d_1\to\infty$, then 
\[
\lim_{d_1\to\infty}\rho_{q}(\nn;\dd)\geq1-\frac{n_{2}!}{(n_{2}-q)!d_2^q}-\frac{n_2!}{(n_1-q)!d_2^{n_1+n_2-q}}.
\]
In particular, in this case, we do not believe $\rho_q(\nn;\dd)$ will approach 1. Proving this would require a vanishing result for asymptotic syzygies, which is open even in the ample case. See \cite[Conjecture~7.1, Conjecture~7.5]{einLazarsfeld12}.

Under mild hypotheses, we are able to generalize Theorem~\ref{thm:main} to describe the asymptotic non-vanishing of syzygies for other line bundles on $\P^{\nn}$.

\begin{theoremalpha}\label{thm:main2}
Fix $\nn=(n_1,n_2)\in \Z_{\geq1}^{2}$, $\dd=(d_1,d_2)\in \Z_{>1}^2$, $\bb\in \Z_{\geq0}^2$ and an index $1\leq q\leq |\nn|$. If $d_1>q+b_1$, $d_2>q+b_2$,
\begin{equation}\label{eq:thm2}
\frac{d_1}{d_2}b_2-b_1<n_1+1,\quad \quad \text{and} \quad \quad \frac{d_2}{d_1}b_1-b_2<n_2+1\,,
\end{equation}
then $K_{p,q}(\nn,\bb;\dd)\neq0$ for all $p$ in the range:
\[
\min\left\{\binom{d_1+i}{i}\binom{d_2+j}{j}\right\}_{\substack{i+j=q \\ 0\leq i \leq n_1 \\ 0\leq j \leq n_2}}-(q+2) \leq p \leq r_{\nn,\dd}-\min\left\{\binom{d_1+n_1-i}{n_1-i}\binom{d_2+n_2-j}{n_2-j}\right\}_{\substack{i+j=q \\ 0\leq i \leq n_1 \\ 0\leq j \leq n_2}}-(|\nn|+1)\,.
\]
\end{theoremalpha}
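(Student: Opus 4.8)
The plan is to re-run the argument that proves Theorem~\ref{thm:main} with the twisted module $S(\bb;\dd)$ in place of $S(\zero;\dd)$, and to check that the hypotheses $d_1>q+b_1$, $d_2>q+b_2$ together with \eqref{eq:thm2} are exactly what that requires. The conceptual point is that \eqref{eq:thm2} is a cohomological vanishing statement in disguise. Since $\iota_{\dd}$ is a closed embedding, $S(\bb;\dd)$ is a saturated graded $R$-module with $H^{i}_{\m}(S(\bb;\dd))_{k}\cong H^{i-1}\!\left(\P^{\nn},\O_{\P^{\nn}}(\bb+k\dd)\right)$ for $2\le i\le |\nn|$, and by the Künneth formula together with the computation of the cohomology of line bundles on $\P^{n_1}$ and $\P^{n_2}$, these groups vanish for every $k\in\Z$ precisely when the two intervals
\[
\Bigl[\tfrac{-b_2}{d_2},\ \tfrac{-n_1-1-b_1}{d_1}\Bigr]\quad\text{and}\quad\Bigl[\tfrac{-b_1}{d_1},\ \tfrac{-n_2-1-b_2}{d_2}\Bigr]
\]
are empty; a direct cross-multiplication shows that this emptiness is equivalent to \eqref{eq:thm2}. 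Hence \eqref{eq:thm2} holds if and only if $S(\bb;\dd)$ is a Cohen--Macaulay $R$-module of depth $|\nn|+1$ — the structural property that makes the monomial methods of \cite{einErmanLazarsfeld16} behave as in the untwisted case and, together with Proposition~\ref{prop:regularity}, pins down the internal degrees of the syzygies. (Note also that $d_i>q+b_i$ forces $d_i>b_i$, so $S(\bb;\dd)$ is generated in degree $0$, by surjectivity of the multiplication maps via Künneth.)

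Concretely, I would realise $S(\bb;\dd)$ as the $R$-submodule $\bigoplus_{k}S_{(b_1+kd_1,\,b_2+kd_2)}$ of $S=\K[x_0,\dots,x_{n_1},y_0,\dots,y_{n_2}]$, a monomial module, and exploit the fine $\Z^{n_1+1}\times\Z^{n_2+1}$-grading: $K_{p,q}(\nn,\bb;\dd)$ splits over multidegrees $\cc$, and in multidegree $\cc$ the computation involves only the monomials of bidegree $\dd$ dividing $\xx^{\cc}$. When the support of $\cc$ meets only $(i+1)$ of the $x$-coordinates and $(j+1)$ of the $y$-coordinates, this multigraded piece coincides with the corresponding piece for the $\dd$-uple Veronese of the coordinate sub-product $\P^{i}\times\P^{j}\subset\P^{\nn}$, which is why the syzygies ``live on'' such sub-products. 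The special case, Theorem~\ref{thm:special}, then amounts to producing, for each splitting $q=i+j$ with $0\le i\le n_1$, $0\le j\le n_2$, an explicit multidegree $\cc$ carrying the offset $\bb$ for which the relevant reduced (co)homology is nonzero; the inequalities $d_1>q+b_1$ and $d_2>q+b_2$ are precisely what guarantee that such a $\cc$, spread over $q$ Koszul steps with initial shift $\bb$, leaves enough room in each coordinate, exactly as $d_i>q$ did when $\bb=\zero$.

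From Theorem~\ref{thm:special} one obtains Theorem~\ref{thm:main2} by the same lifting argument used to deduce Theorem~\ref{thm:main}. A coordinate sub-product $\P^{i}\times\P^{j}$ is carried by $\iota_{\dd}$ into a coordinate linear subspace of $\P^{r_{\nn,\dd}}$ of codimension $r_{\nn,\dd}-r_{(i,j),\dd}$ cut out by coordinate linear forms; because $S(\bb;\dd)$ is Cohen--Macaulay these linear forms act as a regular sequence in the relevant range, so a nonvanishing syzygy of the sub-Veronese transports up to $\P^{\nn}$ and, after convolving with the Koszul complex on those forms, spreads over an interval of homological degrees. Running this over all splittings $q=i+j$ and taking the best bound produces the lower endpoint $\min\{\binom{d_1+i}{i}\binom{d_2+j}{j}\}-(q+2)$ from where syzygies of the minimal sub-Veroneses first appear, while the Serre-dual version applied to the complementary sub-products $\P^{n_1-i}\times\P^{n_2-j}$ (using CM-ness to reverse the Betti table) gives the upper endpoint $r_{\nn,\dd}-\min\{\binom{d_1+n_1-i}{n_1-i}\binom{d_2+n_2-j}{n_2-j}\}-(|\nn|+1)$, the subtracted minimum being the codimension of the span of the complementary sub-Veronese.

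The step I expect to be the main obstacle is checking that the twist by $\bb$ does not break the lifting on \emph{every} sub-product entering the argument: on each $\P^{i}\times\P^{j}$ with $i+j=q$ (controlling the lower bound) and on each complementary $\P^{n_1-i}\times\P^{n_2-j}$ (controlling the upper bound), the restricted situation must still have the vanishing needed for the linear forms to be a regular sequence and for minimality of the lifted syzygies to survive. Tracking this through all splittings is what forces the asymmetric shape of \eqref{eq:thm2}; the worst case is precisely a failure of the intermediate-cohomology vanishing isolated in the first paragraph, so once \eqref{eq:thm2} is imposed the obstruction disappears and the rest is the same bookkeeping as in the proof of Theorem~\ref{thm:main}. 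A minor secondary point, inherited verbatim from the untwisted case, is that convolving a minimal resolution with a Koszul complex stays minimal, so no cancellation of Betti numbers occurs.
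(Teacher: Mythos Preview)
Your first paragraph is on target: the inequalities in \eqref{eq:thm2} are exactly the Cohen--Macaulay condition for $S(\bb;\dd)$, and this matches Proposition~\ref{prop:cohen-macaulay} in the paper. The high-level plan---produce a nonzero syzygy on a sub-product $\P^{q-k}\times\P^{k}$ via Theorem~\ref{thm:special}, then lift and spread---is also the paper's strategy.

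However, your mechanism for the lifting and especially for the \emph{spreading over an interval} is not right, and this is where the real content of the proof lies. You write that the coordinate linear forms cutting out the sub-Veronese ``act as a regular sequence'' on $S(\bb;\dd)$; they do not---there are $r_{\nn,\dd}-r_{(q-k,k),\dd}$ of them, far exceeding the depth $|\nn|+1$, so Cohen--Macaulayness gives you nothing here. Your multigraded observation does give a clean lift for free (the Koszul strand in a multidegree supported on the sub-product is literally the same for $\P^{\nn}$ and for $\P^{q-k}\times\P^{k}$, so $K_{p,q}((q-k,k),\bb;\dd)$ is a direct summand of $K_{p,q}(\nn,\bb;\dd)$), but this only transports the \emph{single} value $p=r_{(q-k,k),\dd}-(q+1)$ coming from Theorem~\ref{thm:special}. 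It does not produce an interval, and ``convolving with the Koszul complex on those forms'' does not fix this: tensoring a resolution with a Koszul complex on linear forms that are not $M$-regular does not preserve minimality, so you cannot read off nonvanishing Betti numbers that way.

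The paper's interval comes from a completely different device. After the Artinian reduction by the specific (non-monomial) regular sequence $\ell_0,\dots,\ell_{|\nn|}$ of Section~\ref{sec:reg-seq}, one works in $\overline{S}$ with the explicit monomial $f_{q,k,\bb}$ and shows (Proposition~\ref{prop:linear-annhilators}) that $x_0,\dots,x_{q-k-1},y_0,\dots,y_{k-1}$ annihilate it. The lift is the trivial chain-map observation of Lemma~\ref{lem:technical-1} (if $\pi(\zeta)$ is not a coboundary then neither is $\zeta$), and the spreading is Lemma~\ref{lem:technical-2}: wedging a non-coboundary with any further monomial keeps it a non-coboundary, so one wedges with bidegree-$\dd$ monomials lying in $\langle x_0,\dots,x_{q-k-1},y_0,\dots,y_{k-1}\rangle\overline{S}$---these may involve \emph{all} the ambient variables, which is what pushes $p$ up to the Hilbert-function bound $r_{\nn,\dd}-\binom{d_1+n_1-(q-k)}{n_1-(q-k)}\binom{d_2+n_2-k}{n_2-k}-(|\nn|+1)$. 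Both endpoints of the interval thus come from the \emph{same} cocycle $\zeta$ and its annihilators; there is no Serre-duality step for the upper bound as you suggest. The hard technical work you are skipping is Section~\ref{sec:ideal-membership}: establishing those annihilators requires understanding ideal membership in the non-monomial ideal $\R$, which is where the modular grading, the index-weighted grading, and the spectral-sequence arguments of Theorem~\ref{thm:tri-deg-vanishing} enter.
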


In many ways, this theorem mimics Theorem~\ref{thm:main}. For example, since $\bb$ is fixed, the bounds governing non-vanishing depend only on $\dd$ and are the same as the bounds in Theorem~\ref{thm:main}. The main difference between these theorems is that when $\bb\neq\zero$ the section module $S(\bb;\dd)$ need not be Cohen-Macaulay as an $R$-module. Our methods require, in a crucial way, that $S(\bb;\dd)$ be Cohen-Macaulay. The conditions in \eqref{eq:thm2} exactly characterize when this occurs. 

Our proof of Theorem~\ref{thm:main} is based on generalizing the monomial methods of \cite{einErmanLazarsfeld16} to explicitly construct non-zero syzygies in the given ranges after having quotient by a regular sequence. The general idea is that given a linear regular sequence on $S(\bb;\dd)$ and an element $f\in R$, not contained in the ideal generated by the regular sequence, it is possible to construct non-zero syzygies in a range determined by the regular sequence and the element $f$. More specifically, if $I\subset R$ is an ideal generated by linear forms that are a regular sequence on $S(\bb;\dd)$, satisfying a few technical conditions, and $f\in R\setminus I$ is a monomial of degree $q$, then there exists a subset $L(f)\subset (I:_{R}f)$ such that $K_{p,q}(\nn,\bb;\dd)\neq0$ for all $p$ in the range:
\[
\#L(f) \leq p \leq \dim_{\K} (I:_R f)_{1}- \dim_{\K} I_{1}\,.
\] 
In \cite{einErmanLazarsfeld16} Ein, Erman, and Lazarsfeld prove a similar result for a single projective space. In this case, they work with a particular linear monomial regular sequence and define $L(f)$ in terms of monomials dividing $f$. However, these methods cannot be directly applied to a product of projective spaces. 

First, the case of a product of projective spaces is substantially complicated by the fact that there are no monomial regular sequences of length $|\nn|+1$ on either the $\Z^2$-graded Cox ring of $\P^{\nn}$, denoted $\Cox(\P^{\nn})$ (see \cite{cox95}), or the $\Z$-graded homogeneous coordinate ring of $\P^{\nn}$ embedded by $\O_{\P^{\nn}}(\dd)$. Instead, we work with a set of bi-degree $\dd=(d_1,d_2)$ elements of the Cox ring of $\P^{\nn}$, which, while not a regular sequence of length $|\nn|+1$ on $\Cox(\P^{\nn})$, corresponds to a regular sequence of length $|\nn|+1$ on the homogeneous coordinate ring of $\P^{\nn}$ embedded by $\O_{\P^{\nn}}(\dd)$. For example, if $\nn=(2,4)$ and $\dd=(d_1,d_2)$ then the sequence we use is:
\begin{align*}
g_{0}&=x_0^{d_1}y_0^{d_2}\\
g_{1}&=x_1^{d_1}y_0^{d_2}+x_0^{d_1}y_1^{d_2} \\
g_{2}&=x_{2}^{d_1}y_{0}^{d_2}+x_1^{d_1}y_1^{d_2}+x_0^{d_1}y_2^{d_2} \\
g_{3}&=x_2^{d_1}y_1^{d_2}+x_1^{d_1}y_2^{d_2}+x_0^{d_1}y_3^{d_2} \\
g_{4}&=x_{2}^{d_1}y_{2}^{d_2}+x_1^{d_1}y_{3}^{d_2}+x_0^{d_1}y_{4}^{d_2} \\
g_{5}&=x_{2}^{d_1}y_{3}^{d_2}+x_1^{d_1}y_{4}^{d_2}\\
g_{6}&=x_2^{d_1}y_{4}^{d_2}.
\end{align*}
Put differently, we work with a set of bi-degree $\dd=(d_1,d_2)$ elements of the Cox ring of $\P^{\nn}$ that is not a regular sequence on $\Cox(\P^{\nn})$, but which define an ideal in $\Cox(\P^{\nn})$ supported on the irrelevant ideal of $\P^{\nn}$. Thus, in the language of \cite{berkesch17}, we work with a virtual regular sequence of length $|\nn|+1$ of bi-degree $\dd=(d_1,d_2)$ on $\Cox(\P^{\nn})$. These forms, when considered as degree one elements of the homogeneous coordinate ring of $\P^{\nn}$ embedded by $\O_{\P^{\nn}}(\dd)$, are a regular sequence of length $|\nn|+1$. 

\begin{example}
Continuing the example when $\nn=(2,4)$ from above, the elements $g_{0},g_{1},\ldots,g_{6}$ do not form a regular sequence on $\Cox(\P^{2}\times\P^{4})\cong \K[x_0,x_1,x_2,y_0,y_1,y_2,y_3,y_{4}]$. In particular, $\langle g_{0},g_{1},\ldots,g_{6}\rangle$ has $\langle x_0,x_1,x_2\rangle$ as one of its associated primes, so the $g_{i}$'s do not even form a system of parameters on $\Cox(\P^2\times\P^4)$. That said, one can show that $g_{0},g_{1},\ldots,g_{6}$ is supported on $\langle x_0,x_1,x_2\rangle\cap \langle y_{0},y_{1},y_{2},y_{3},y_{4}\rangle$. 
\end{example}

Working with such a regular sequence poses significant new challenges. For example, in \cite{einErmanLazarsfeld16} the authors work with a monomial regular sequence, and so computing the analog of $(I:_Rf)$ is amenable to monomial techniques. The regular sequence we work with, on the other hand, is complicated, and computing $(I:_{R}f)$ is in general difficult. 

In fact, we devote all of Section~\ref{sec:ideal-membership} to developing methods for understanding $(I:_{R}f)$. The central theme is to exploit the fact that our regular sequence, while not monomial, has a large number of symmetries. That is, the ideal generated by the regular sequence is homogeneous with respect to a number of non-trivial non-standard gradings. These gradings, when combined with a series of spectral sequence arguments, eventually allow us to describe $(I:_{R}f)$.

A second subtle challenge is defining the correct subset of $(I:_{R}f)$, from which to construct non-trivial syzygies. In particular, since $I$ is not generated by monomials, the notion of one monomial dividing another in $R/I$ is quite nuanced. This means the definition of $L(f)$ used in \cite{einErmanLazarsfeld16} for a single projective space does not generalize to the case of a product of projective spaces. Instead, we make use of a non-standard grading for which $I$ is homogeneous, and define $L(f)$ in terms of certain degrees in this special grading. 

Finally, we note that Theorem~\ref{thm:main} is not sharp. One source of error is that we are unable to fully describe $(I:_{R}f)$, and a better understanding of this ideal would result in sharper non-vanishing statements. That said, we do believe that the upper bounds in Theorem~\ref{thm:main} are asymptotically sharp.  


The paper is organized as follows: \S~\ref{sec:set-up} gathers background results and sets up the problem. \S~\ref{sec:reg-seq} introduces the regular sequence crucial to our methods, and in \S~\ref{sec:ideal-membership} we study the ideal membership question for the ideal generated by this regular sequence. In \S~\ref{sec:monomial-techniques} we develop the monomial methods we use to construct non-trivial syzygies, and \S~\ref{sec:fqk} presents the exact monomials we will use. \S~\ref{sec:special-case} contains the key case of $K_{p,q}(\nn,\bb;\dd)$ for $\P^{q-k}\times\P^{k}$. Finally, \S~\ref{sec:proof-of-theorem} contains the proof of Theorem~\ref{thm:main}.


\section*{Acknowledgements}  I would like to thank Daniel Erman, Mois\'es Herrad\'on Cueto, Kit Newton, Solly Parenti, Claudiu Raicu, and Melanie Matchett Wood for their helpful conversations and comments. The computer algebra system \texttt{Macaulay2} \cite{M2} provided valuable assistance throughout this work.


\section*{Notation}  Throughout we work over a field $\K$. When clear we generally admit the reference to the field, and so write $\P^{r}$ for  $\P^{r}_{\K}\coloneqq \P(\K^{r+1})$. When referring to vectors (i.e. elements of $\Z^{n}$ or $\K^n$) we normally use a bold font for example $\aa,\bb,\dd,\vv,\ww$. Given a vector $\vv=(v_1,v_2,\ldots,v_{n})$ we denote the sum $v_1+v_2+\cdots+v_n$ by $|\vv|$. For the sake of notational hygiene we abuse notation slightly and write $\Z^2_{\geq1}$ for $\left(\Z_{\geq1}\right)^2$, that is tuples $(a,b)\in \Z^2$ such that $a\geq 1$ and $b\geq1$. Likewise for $\Z_{\geq0}^2$. For brevity we write $\one$ for $(1,1)\in\Z^2$ and $\zero$ for $(0,0)\in\Z^2$.

\section{Background and Set Up}\label{sec:set-up}

Fixing $\nn=(n_1,n_2)\in \Z^2_{\geq1}$, we let $S'=\K[x_{0},x_{1},\ldots,x_{n_1}]$ and $S''=\K[y_{0},y_{1},\ldots,y_{n_2}]$ be standard $\Z$-graded polynomial rings, and set $S=S'\otimes_{\K}S''$ with the induced $\Z^{2}$-multigrading. Concretely $S$ is isomorphic to the bi-graded polynomial ring $\K[x_0,x_1,\ldots,x_{n_1},y_{0},y_{1}\ldots,y_{n_2}]$ where $\deg(x_i)=(1,0)\in \Z^2$ and $\deg(y_j)=(0,1)\in \Z^2$ for every $i=0,1,\ldots,n_1$ and $j=0,1,\ldots,n_2$. Moreover, $S$ is isomorphic to Cox ring of $\P^{\nn}$, which we generally denote $\Cox(\P^{\nn})$ (see \cite{cox95}). Since $S$ is $\Z^2$-graded there is a natural decomposition of $\K$-vector spaces
\[
S\cong \bigoplus_{\aa\in\Z^2} S_{\aa},
\]
where $S_{\aa}$ is the $\K$-vector space spanned by monomials in $S$ of bi-degree $\aa$. The Hilbert function of $S$ is the function $\HF(\aa,S)=\dim_{\K} S_{\aa}$. Similarly given an ideal $J\subset S$ that is homogeneous with respect to this $\Z^2$-grading, we write $J_{\aa}$ for the $\K$-vector space spanned by monomials in $J$ of bi-degree $\aa$, and the Hilbert function of $J$ is the function $\HF(\aa,J)=\dim_{\K} J_{\aa}$.

\begin{defn}
Given $\bb\in \Z^2$ and $\dd\in \Z^2_{\geq1}$ we define the \textbf{bi-graded Veronese module} of $S$ to be 
\[
S(\bb;\dd)\coloneqq\bigoplus_{k\in \Z}S_{k\dd+\bb}\subset S,
\]
which we consider as a $\Z$-graded $R=\Sym S_{\dd}$ module. 
\end{defn}

More specifically a generator $\ell$ of $R$ corresponds to a monomial $m\in S_{\dd}$, and then $\ell$ acts on $S(\bb;\dd)$ via multiplication by this monomial $m$. Further, the degree $k$ piece of $S(\bb;\dd)$ is $S_{k\dd+\bb}$, and so the degree one piece is $S_{\dd+\bb}$. Now as an $R$-module $S(\bb;\dd)$ is isomorphic to the $\Z$-graded homogeneous coordinate ring of $\P^{\nn}$ embedded by $\O_{\P^{\nn}}(\dd)$. 

Given $p,q\in \N$ we define \textbf{$(p,q)$-th Koszul cohomology group of} $S(\bb;\dd)$ to be the cohomology of the sequence:
\begin{equation}\label{eq:koszul-R}
\begin{tikzcd}[column sep = 3em]
\cdots \rar{}& \Alt^{p+1}R_{1}\otimes S_{(q-1)\dd+\bb}\rar{\partial_{p+1,q-1}}&\Alt^{p}R_{1}\otimes S_{q\dd+\bb}\rar{\partial_{p,q}}&\Alt^{p-1}R_{1}\otimes S_{(q+1)\dd+\bb}\rar{}&\cdots
\end{tikzcd}
\end{equation}
where the differentials are given 
\begin{align*}
\partial_{p+1,q-1}\left(m_0\wedge m_1\wedge\cdots\wedge m_p\otimes f\right)&=\sum_{i=0}^{p}(-1)^i m_0\wedge m_1\wedge\cdots\wedge \hat{m}_i\wedge \cdots\wedge m_p\otimes m_if \\
\partial_{p,q}\left(m_1\wedge m_2\wedge\cdots\wedge m_p\otimes f\right)&=\sum_{i=1}^{p}(-1)^i m_1\wedge m_2\wedge\cdots\wedge \hat{m}_i\wedge \cdots\wedge m_p\otimes m_if.
\end{align*}
As in the introduction, we denote this by $K_{p,q}(\nn,\bb;\dd)$, and note that $K_{p,q}(\nn,\bb;\dd)\cong K_{p,q}\left(\P^{\nn},\O_{\P^{\nn}}(\bb);\O_{\P^{\n}}(\dd)\right)$.

That said it will be helpful for us to realize the Koszul complex in \eqref{eq:koszul-R} above in a different way. Towards this, notice that there exist maps:
\begin{center}
\begin{tikzcd}[column sep = 3.5em]
R\rar[two heads]{} & S(\zero,\dd) \rar[hook] & S
\end{tikzcd}.
\end{center}
Moreover, when restricted to the degree one piece of $R$, and the subsequent images, these maps give natural isomorphisms
\begin{center}
\begin{tikzcd}[column sep = 3.5em]
R_{1}\arrow[r, leftrightarrow, "\sim"] & S(\zero;\dd)_{1} \arrow[r, leftrightarrow, "\sim"] & S_{\dd}.
\end{tikzcd}
\end{center}
Thus, the $R$-module action of $R_{1}$ on $S$ is the same as the $S$-module action of $S_{\dd}$ on $S$, and so the Koszul complex in \eqref{eq:koszul-R} is naturally isomorphic to the following Koszul complex:
\begin{equation}\label{eq:kozul-S}
\begin{tikzcd}[column sep = 3em]
\cdots \rar{}& \Alt^{p+1}S_{\dd}\otimes S_{(q-1)\dd+\bb}\rar{\partial_{p+1,q-1}}&\Alt^{p}S_{\dd}\otimes S_{q\dd+\bb}\rar{\partial_{p,q}}&\Alt^{p-1}S_{\dd}\otimes S_{(q+1)\dd+\bb}\rar{}&\cdots
\end{tikzcd}
\end{equation}
where the differentials are defined in an analogous way. So the cohomology of \eqref{eq:kozul-S} is isomorphic to $K_{p,q}(\nn,\bb;\dd)$.

We end this section by noting that considerations of Castelnuovo-Mumford regularity show that if $\dd\gg0$, relative to $\bb$, then $K_{p,q}(\nn,\bb;\dd)=0$ for $q>|\nn|$. In particular, if $\bb=\zero$ then $K_{p,q}(\nn,\zero;\dd)=0$ for $q>|\nn|$ for all choices of $\dd\in \Z^2_{\geq1}$. 

\begin{example}
If $\bb\neq\zero$ then it is not the case that $K_{p,|\nn|}(\nn,\zero;\dd)=0$ for all choices of $\dd\in \Z^2_{\geq1}$. For example, using arguments similar to those in Proposition~\ref{prop:regularity} one can show that if $\nn=(1,3)$, $\dd=(3,3)$, and $\bb=(-2,-1)$ then there exists $p$ such that $K_{p,|\nn|}(\nn,\bb;\dd)\neq0$.\end{example}

\begin{prop}\label{prop:regularity}
Fix $\nn=(n_1,n_2)\in \Z^2_{\geq1}$, $\dd=(d_1,d_2)\in \Z^2_{\geq1}$, and $\bb\in \Z^2$. If the following two pairs of inequalities are satisfied then $K_{p,q}(\bb,\bb;\dd)=0$ for $q>|\nn|$:
\begin{align}
d_1+b_1n_2>-n_1-1 \quad \quad &\text{or} \quad \quad d_2+b_2n_2<0, \label{eq:reg-1} \\
d_1+b_1n_1<0 \quad \quad &\text{or} \quad \quad d_2+b_2n_1>-n_2-1. \label{eq:reg-2}
\end{align}
\end{prop}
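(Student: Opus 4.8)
\textbf{Proof plan for Proposition~\ref{prop:regularity}.}

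The plan is to bound the Castelnuovo--Mumford regularity of the bigraded module $S(\bb;\dd)$ over $R$ by passing through a cohomological criterion and then carefully interpreting the multigraded vanishing on $\P^{\nn}$. Recall the standard fact (a multigraded Serre duality / Künneth computation) that
\[
\H^k\bigl(\P^{\nn},\O_{\P^{\nn}}(\aa)\bigr)\neq 0
\]
only in the three cases $k=0$ (when $a_1\geq 0$ and $a_2\geq 0$), $k=n_1$ (when $a_1\leq -n_1-1$ and $a_2\geq 0$), $k=n_2$ (when $a_1\geq 0$ and $a_2\leq -n_2-1$), and $k=n_1+n_2$ (when $a_1\leq -n_1-1$ and $a_2\leq -n_2-1$); these follow by writing $\O_{\P^{\nn}}(\aa)=\pi_1^*\O(a_1)\boxtimes\pi_2^*\O(a_2)$ and using the Künneth formula together with the cohomology of $\O_{\P^{n_i}}(a_i)$. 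The key input is then the standard Koszul-cohomology vanishing criterion: $K_{p,q}(\nn,\bb;\dd)=0$ for \emph{all} $p$ provided that $\H^i\bigl(\P^{\nn},\O_{\P^{\nn}}(\bb)\otimes\O_{\P^{\nn}}((q-i)\dd)\bigr)=0$ for all $i\geq 1$ — more precisely, one uses the hypercohomology spectral sequence (or the description of $K_{p,q}$ as the cohomology of a Koszul-type complex twisted by sections, as in Green's work) to see that $K_{p,q}(\nn,\bb;\dd)$ for $q>|\nn|$ is computed by the groups $\H^i\bigl(\P^{\nn},\O_{\P^{\nn}}(\bb+(q-i)\dd)\bigr)$ for $1\leq i\leq |\nn|$, so it suffices to show each of these vanishes whenever $q>|\nn|$.

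Next I would run through the three relevant cohomological degrees $i\in\{n_1,\ n_2,\ n_1+n_2\}$ (the only ones where higher cohomology of a line bundle on $\P^{\nn}$ can be nonzero), and in each case translate the nonvanishing condition on $\O_{\P^{\nn}}(\bb+(q-i)\dd)$ into an inequality on $q$. For $i=n_1$: nonvanishing forces $b_1+(q-n_1)d_1\leq -n_1-1$ and $b_2+(q-n_1)d_2\geq 0$; since $q>|\nn|$ gives $q-n_1\geq n_2+1>0$, the first inequality reads $b_1\leq -n_1-1-(q-n_1)d_1$ and the worst (largest) right-hand value over the allowed range $q-n_1\geq n_2+1$... — rather, I want to argue the \emph{contrapositive}: if hypothesis \eqref{eq:reg-1} holds, then for every $q>|\nn|$ at least one of the two nonvanishing conditions for $i=n_1$ fails. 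Writing $m=q-n_1\geq n_2+1$, the condition $b_2+md_2<0$ is implied by $d_2+b_2n_2<0$ when $b_2\leq 0$ and $m\geq n_2$... the cleanest route is: the two clauses of \eqref{eq:reg-1} are set up so that plugging in the extreme admissible value $m=n_2$ (respectively analysing the sign of $d_2$) kills the $\H^{n_1}$ contribution, and monotonicity in $m$ handles all larger $m$. Symmetrically, \eqref{eq:reg-2} with $m'=q-n_2\geq n_1+1$ kills the $\H^{n_2}$ contribution. Finally, the top cohomology $\H^{n_1+n_2}$ requires \emph{both} $b_1+(q-|\nn|)d_1\leq -n_1-1$ and $b_2+(q-|\nn|)d_2\leq -n_2-1$ with $q-|\nn|\geq 1$; this is dominated by — i.e. its vanishing is implied by — the vanishing already forced for $i=n_1$ (or $i=n_2$), because if $\H^{n_1+n_2}(\O(\bb+(q-|\nn|)\dd))\neq 0$ then in particular $b_2+(q-|\nn|)d_2\leq -n_2-1<0$, and one checks this pushes $q$ into the range where \eqref{eq:reg-1}'s failure would already have been contradicted. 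I would organize this as: assume some $K_{p,q}(\nn,\bb;\dd)\neq 0$ with $q>|\nn|$, pick the $i$ with $\H^i\neq 0$, and derive that \emph{both} clauses of the corresponding displayed pair of inequalities fail, contradiction.

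The main obstacle I anticipate is not any single computation but getting the bookkeeping exactly right: one must be careful that the Koszul-cohomology-to-sheaf-cohomology reduction only involves the twists $(q-i)\dd$ for $1\leq i\leq |\nn|$ (so that the relevant exponents $q-i$ range over a controlled interval), and one must verify that the stated inequalities \eqref{eq:reg-1}, \eqref{eq:reg-2} are precisely strong enough to handle the \emph{extreme} value of the free parameter $m=q-i$ (using that shifting $\aa$ by a positive multiple of $\dd\in\Z^2_{\geq1}$ only makes the higher-cohomology nonvanishing conditions harder to satisfy, so the binding case is the smallest admissible $m$). I would also double-check the edge behaviour when $b_i$ has mixed sign, since then the "or" in each hypothesis is genuinely needed. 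The top-degree cohomology $\H^{n_1+n_2}$ should require no separate hypothesis — it is subsumed — and confirming that cleanly is the last step. Once the reduction and the monotonicity observation are in place, each of the three cases is a one-line inequality chase.
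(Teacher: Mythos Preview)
Your overall strategy---reduce to sheaf cohomology on $\P^{\nn}$ and analyse the three K\"unneth contributions $i=n_1,n_2,|\nn|$---is the same as the paper's. The paper, however, carries out the reduction via a Castelnuovo--Mumford regularity criterion (Proposition~2.38 of \cite{aprodu10}): one only has to check $H^i\bigl(\P^{\nn},\O_{\P^{\nn}}(\dd+\bb(|\nn|-i))\bigr)=0$ for each $i>0$, i.e.\ a \emph{single} twist per cohomological degree. With that reduction in hand, the $i=n_1$ check is literally \eqref{eq:reg-1}, the $i=n_2$ check is literally \eqref{eq:reg-2}, and the $i=|\nn|$ check is automatic because $d_1,d_2\geq 1$. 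No monotonicity argument is needed.

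Your plan has two genuine gaps relative to this. First, the monotonicity claim (``shifting $\aa$ by a positive multiple of $\dd$ only makes the higher-cohomology nonvanishing conditions harder'') is false for the intermediate groups: nonvanishing of $H^{n_1}$ requires both $a_1\leq -n_1-1$ and $a_2\geq 0$, and adding $\dd$ makes the first harder but the second \emph{easier}, so there is no single ``binding'' value of $m=q-i$. Second, and more seriously, your twists are $\bb+(q-i)\dd$, so the conditions you would naturally extract have the shape $b_1+m d_1>-n_1-1$ or $b_2+m d_2<0$; the proposition's hypotheses instead read $d_1+b_1 n_2>-n_1-1$ or $d_2+b_2 n_2<0$, with the roles of $b_i$ and $d_i$ reversed. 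These are not equivalent in general, so your argument as sketched does not recover \eqref{eq:reg-1}--\eqref{eq:reg-2}. The fix is exactly what the paper does: invoke the regularity criterion so that only the single twist with exponent $|\nn|-i$ appears, and then the stated inequalities drop out verbatim.
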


\begin{proof}
By Proposition~2.38 of \cite{aprodu10} it is enough to show that $H^i(\P^{\nn}, \O_{\P^{\nn}}(\dd+\bb(|\nn|-i)))=0$ for all $i>0$. Using the K\"{u}nneth formula \cite[\href{https://stacks.math.columbia.edu/tag/0BEC}{Tag 0BEC}]{stacks-project} to compute $H^i(\P^{\nn}, \O_{\P^{\nn}}(\dd+\bb(|\nn|-i)))=0$ we see that these cohomology groups are only potentially non-zero when $i=n_1,n_2,$ and $|\nn|$. In particular, $H^{n_1}(P^{\nn}, \O_{\P^{\nn}}(\dd+\bb(|\nn|-n_1))$ is isomorphic to $H^{n_1}(\P^{n_1},\O_{\P^{n_1}}(d_1+b_1n_2))\otimes H^{0}(\P^{n_2}, \O_{\P^{n_2}}(d_2+b_2n_2))$. Thus, the condition that $H^{n_1}(P^{\nn}, \O_{\P^{\nn}}(\dd+\bb(|\nn|-n_1))=0$ is equivalent to \eqref{eq:reg-1}. An analogous argument shows that the vanishing of $H^{n_2}(P^{\nn}, \O_{\P^{\nn}}(\dd+\bb(|\nn|-n_2))$ is equivalent to \eqref{eq:reg-2}. Finally, in the last case, when $i=|\nn|$, by using the K\"{u}nneth formula we see that $H^{|\nn|}(\P^{\nn}, \O_{\P^{\nn}}(\dd+\bb(|\nn|-|\nn|)))$ is zero if and only if $d_1>- n_1-1$ or $d_2>-n_2-1$. Since $d_1\geq1$ and $d_2\geq1$ these conditions are always satisfied. 
\end{proof}

 \section{A Regular Sequence on $\P^{n_1}\times \P^{n_2}$}\label{sec:reg-seq}

One useful approach when attempting to construct non-zero syzygies is to quotient by a linear regular sequence as this does not change the Koszul cohomology groups \cite[Theorem~2.20]{aprodu10}. For example, in order to prove non-vanishing results for $\P^{n}$, Ein, Erman, and Lazarsfeld quotient by the regular sequence consisting of powers of the variables \cite{einErmanLazarsfeld16}. Since we are working on a product of projective spaces such a regular sequence is not an option for us. Namely there are no monomial regular sequences of bi-degree $\dd$ of length $|\nn|+1$ on either the Cox ring of $\P^{\nn}$ or the homogeneous coordinate ring of $\P^{\nn}$ embedded by $\O_{\P^{\nn}}(\dd)$. 

Instead, we choose to work with a sequence of multigraded forms which form a virtual regular sequence of length $|\nn|+1$ on the Cox ring of $\P^{\nn}$ (i.e. $S$). That is to say a sequence of elements $g_{0},g_{1},\ldots,g_{|\nn|}$ of bi-degree $\dd$ whose support is contained in the irrelevant ideal $\langle x_0,x_1,\ldots,x_{n_1}\rangle \cap \langle y_0,y_1,\ldots,y_{n_2}\rangle$ of $\P^{\nn}$. Since the ideal $\langle g_0,g_1,\ldots,g_{|\nn|}\rangle$ is supported on the irrelevant ideal, the $g_i$ form a regular sequence on $\P^{\nn}$. By the isomorphism between $S_{\dd}$ and $R_{1}$ discussed in the previous section, such $g_{0},g_{1},\ldots,g_{|\nn|}$ correspond to a sequence of linear forms $\ell_{0},\ell_{1},\ldots,\ell_{|\nn|}$ in $R$, that is a regular sequence on $S(\bb;\dd)$.The $g_{i}$ we use generalize forms first introduced by Eisenbud and Schreyer in \cite{eisenbudSchreyer09}, and later used in \cite{berkesch13} and \cite{oeding17}.

\begin{defn}
Fix $\nn=(n_1,n_2)\in \Z^2_{\geq1}$ and $\dd=(d_1,d_2)\in \Z^2_{\geq1}$. Given $0\leq t \leq |\nn|$ we define
\[
g_t\coloneqq\sum_{\substack{i+j=t\\0\leq i \leq n_1\\ 0 \leq j \leq n_2}}x_{i}^{d_1}y_j^{d_2}.
\] 
\end{defn}

\begin{example}
For example if $\nn=(1,1)$ and $\dd=(d_1,d_2)$ then there are three $g_{t}$'s:
\[
g_0=x_0^{d_1}y_0^{d_2}, \quad g_1=x_0^{d_1}y_1^{d_2}+x_1^{d_1}y_0^{d_2}, \quad g_2=x_1^{d_1}y_1^{d_2}.
\]
On the other hand if $\nn=(2,3)$ and $\dd=(d_1,d_2)$ there are six $g_{t}$'s:
\[
g_0=x_0^{d_1}y_0^{d_2}, \quad g_1=x_0^{d_1}y_1^{d_2}+x_1^{d_1}y_0^{d_2}, \quad g_2=x_0^{d_1}y_2^{d_2}+x_1^{d_1}y_1^{d_2}+x_{2}^{d_1}y_{0}^{d_2},
\]
\[
g_{3}=x_{0}^{d_1}y_{3}^{d_2}+x_{1}^{d_1}y_{2}^{d_2},
\quad g_{4}=x_{1}^{d_1}y_{3}^{d_2}+x_{2}^{d_1}y_{2}^{d_2}, \quad g_{5}=x_{2}^{d_1}y_{3}^{d_2}.
\]
\end{example}

\begin{defn}
Throughout the paper we let $\R(\nn,\dd)=\langle g_0,g_1,\ldots,g_{|\nn|}\rangle$. Note that $\R(\nn,\dd)$ depends on both $\nn=(n_1,n_2)$ and $\dd=(d_1,d_2)$, however, for notational hygiene we often suppress this and simply write $\R$ or $\R(\dd)$ for $\R(\nn,\dd)$ when we feel it will not cause confusion.  We denote the quotient $S/\R$ by $\overline{S}$.  
\end{defn}

An extremely important aspect of these particular forms is that they behave nicely when quotienting by $x_{n_1}$ or $y_{n_2}$. For example, if $\nn=(2,3)$ then the image of $g_{2}=x_0^{d_1}y_2^{d_2}+x_1^{d_1}y_1^{d_2}+x_{2}^{d_1}y_{0}^{d_2}$ in $S/\langle x_{2}\rangle$ is $x_0^{d_1}y_2^{d_2}+x_1^{d_1}y_1^{d_2}$, which is the same as $g_{2}$ when $\nn=(1,3)$. This makes them amenable to inductive arguments on $n_1$ or $n_2$. We make significant use of this fact throughout, and so record it in the following lemma.

\begin{lemma}\label{lem:induction-ideal}
Fix $\nn=(n_1,n_2)\in \Z_{\geq1}^2$ and $\dd=(d_1,d_2)\in\Z^2_{\geq1}$. Let $S'=\K[x_0,x_1,\ldots, x_{n_1-i},y_{0},y_1,\ldots,y_{n_2-j}]$ considered with the bi-grading given by $\deg x_s=(1,0)\in \Z^2$ and $\deg y_t=(0,1)\in \Z^2$. Considering $\R(n_1-i,n_2-j,\dd)\subset S'$ there exists a natural isomorphism
\begin{center}
\begin{tikzcd}[column sep = 5em]
\frac{\displaystyle S'}{\displaystyle\R(n_1-i,n_2-j,\dd)} \arrow[r, leftrightarrow, "\sim"] & \frac{\displaystyle S}{\displaystyle\R(\nn,\dd)+\langle x_{n_1-i+1},x_{n_1-i+2},\ldots,x_{n_1},y_{n_2-j+1},y_{n_2-j+2},\ldots,y_{n_2}\rangle}.
\end{tikzcd}
\end{center}
\end{lemma}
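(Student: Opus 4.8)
The plan is to exhibit the isomorphism directly and degree by degree, using the defining property of the forms $g_t$ under specialization. First I would observe that quotienting $S$ by the extra variables $x_{n_1-i+1},\ldots,x_{n_1},y_{n_2-j+1},\ldots,y_{n_2}$ produces precisely the bigraded polynomial ring $S'$; call this surjection $\pi\colon S\twoheadrightarrow S'$. So the statement amounts to the claim that $\pi\bigl(\R(\nn,\dd)\bigr) = \R(n_1-i,n_2-j,\dd)$ inside $S'$, since then the two quotients in the display agree. The containment $\pi(\R(\nn,\dd))\subseteq \R(n_1-i,n_2-j,\dd)$ is the content that needs care; the reverse containment is immediate once one knows that each generator $g_s$ of $\R(n_1-i,n_2-j,\dd)$ lies in the image of some generator of $\R(\nn,\dd)$.

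The key computation is to track what happens to $g_t$ (for $\P^{n_1}\times\P^{n_2}$) under $\pi$. Writing $g_t = \sum_{i'+j'=t,\ 0\le i'\le n_1,\ 0\le j'\le n_2} x_{i'}^{d_1}y_{j'}^{d_2}$, every monomial $x_{i'}^{d_1}y_{j'}^{d_2}$ with $i' > n_1-i$ or $j' > n_2-j$ is killed by $\pi$, and every monomial with $i'\le n_1-i$ and $j'\le n_2-j$ survives unchanged. Hence $\pi(g_t) = \sum_{i'+j'=t,\ 0\le i'\le n_1-i,\ 0\le j'\le n_2-j} x_{i'}^{d_1}y_{j'}^{d_2}$, which is exactly the form $g_t$ for $\P^{n_1-i}\times\P^{n_2-j}$ when $0\le t \le (n_1-i)+(n_2-j)$, and is $0$ when $t$ exceeds that range (there are no admissible $(i',j')$). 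Therefore $\pi$ carries the generating set $\{g_0,\ldots,g_{|\nn|}\}$ of $\R(\nn,\dd)$ onto $\{g_0,\ldots,g_{(n_1-i)+(n_2-j)}\}\cup\{0\}$, i.e.\ onto a generating set of $\R(n_1-i,n_2-j,\dd)$. Since $\pi$ is surjective, $\pi(\R(\nn,\dd)) = \R(n_1-i,n_2-j,\dd)$, and quotienting $S/\R(\nn,\dd)$ further by the images of the extra variables yields $S'/\R(n_1-i,n_2-j,\dd)$; this gives the desired natural isomorphism, compatible with the bigradings since $\pi$ is a map of bigraded rings.

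I do not expect a serious obstacle here: the whole argument is the elementary bookkeeping observation that the $g_t$ are "triangular" with respect to the variable order, so that specialization to a coordinate subspace simply truncates the index set. The only point to state carefully is that one should do both the $x$- and $y$-specializations at once (rather than iterating), so that the combined index constraint $i'\le n_1-i$, $j'\le n_2-j$ appears cleanly; iterating one variable at a time works too but requires checking that the intermediate forms are again of the prescribed shape, which is exactly what the displayed formula for $\pi(g_t)$ shows. Finally I would remark that "natural" here means the isomorphism is induced by the identity on the common variables $x_0,\ldots,x_{n_1-i},y_0,\ldots,y_{n_2-j}$, so it is visibly functorial in the obvious sense and commutes with the further specialization maps used later in the inductive arguments.
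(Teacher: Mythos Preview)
Your proposal is correct and follows essentially the same approach as the paper: both arguments identify $S'$ with $S/\langle x_{n_1-i+1},\ldots,y_{n_2}\rangle$ and then check that the image of each generator $g_t$ of $\R(\nn,\dd)$ under this specialization is exactly the corresponding generator of $\R(n_1-i,n_2-j,\dd)$ (or zero). The only cosmetic difference is that the paper reduces by induction to the case $(i,j)=(1,0)$ and kills one variable at a time, whereas you handle the full specialization in one step; as you yourself note, the two are equivalent and rest on the same computation of $\pi(g_t)$.
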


\begin{proof}
By induction it is enough to consider the case when $i=1$ and $j=0$. There is a natural isomorphism 
\begin{center}
\begin{tikzcd}[column sep = 5em]
\frac{\displaystyle \frac{S}{\langle x_{n_1}\rangle}}{\displaystyle \frac{\R(\nn,\dd)+\langle x_{n_1}\rangle}{\langle x_{n_1}\rangle}}\arrow[r, leftrightarrow, "\sim"] & \frac{\displaystyle S}{\displaystyle\R(\nn,\dd)+\langle x_{n_1}\rangle},
\end{tikzcd}
\end{center}
and since $S/\langle x_{n_1}\rangle\cong S'$ it is enough to show that $\frac{\R(\nn,\dd)+\langle x_{n_1}\rangle}{\langle x_{n_1}\rangle}$ is isomorphic to $\R(n_1-1,n_2,\dd)$. A straightforward argument shows that $\frac{\R(\nn,\dd)+\langle x_{n_1}\rangle}{\langle x_{n_1}\rangle}$ is isomorphic to the ideal $\langle \overline{g}_{0},\overline{g}_{1},\ldots,\overline{g}_{|\n|}\rangle$ where $\overline{g}_{t}$ is the image of $g_{t}$ in $S/\langle x_{n_1}\rangle$. However, one sees that
\[
\overline{g}_{t}=\sum_{\substack{a+b=t\\ 0\leq a \leq n_1-1 \\ 0 \leq b \leq n_2}} x_{a}^{d_1}y_{b}^{d_2}, 
\]
and so considered as an element of $S'$, the ideal $\langle \overline{g}_{0},\overline{g}_{1},\ldots,\overline{g}_{|\nn|}\rangle$ is isomorphic to $\R(n_1-1,n_2,\dd)$. 
\end{proof}

As noted in the previous section, there is a natural isomorphism between $R_{1}$ and $S_{\dd}$, and we write $\ell_{t}$ for the image of $g_{t}$ in $R_{1}$ under this isomorphism. Notice that while $g_{t}\in S$ has bi-degree $\dd$, the corresponding element $\ell_{t}\in R$ is of degree one. We then let $\L(\nn,\dd)$ be the ideal $\langle \ell_{0},\ell_{1},\ldots,\ell_{|\nn|}\rangle \subset R$. As with $\R(\nn,\dd)$ we will often write $\L$ or $\L(\dd)$ for $\L(\nn,\dd)$ when $\nn$ and $\dd$ are clear from context. We write $\overline{R}$ for the quotient $R/\L$, and $\overline{S}(\bb;\dd)$ for $S(\bb;\dd)/\L S(\bb;\dd)$, which we consider as a $\overline{R}$-module. The natural isomorphisms discussed in the previous section remain isomorphisms after quotienting by $\R$ and $\L$
\begin{center}
\begin{tikzcd}[column sep = 3.5em]
\overline{R}_{1}\arrow[r, leftrightarrow, "\sim"] & \overline{S}(\zero;\dd)_{1} \arrow[r, leftrightarrow, "\sim"] & \overline{S}_{\dd}.
\end{tikzcd}
\end{center}

As indicated in the start of the section these $\ell_{t}$'s form a regular sequence on $S(\bb;\dd)$ as long as $S(\bb;\dd)$ is Cohen-Macaulay as an $R$-module. The case when $\dd=\one$ and $\bb=\zero$ was shown by Eisenbud and Schreyer in their work on Boij-S\"{o}derberg theory \cite[Proposition~5.2]{eisenbudSchreyer09}. The following proposition generalizes their argument to the case of all $\dd\in \Z^{t}_{\geq1}$.

\begin{prop}\label{prop:regular-sequence}
Fix $\nn=(n_1,n_2)\in \Z_{\geq1}^2$, $\dd=(d_1,d_2)\in\Z^2_{\geq1}$, and $\bb=(b_1,b_2)\in\Z^2$. If 
\[
\frac{d_1}{d_2}b_2-b_1<n_1+1\quad \quad \text{and} \quad \quad \frac{d_2}{d_1}b_1-b_2<n_2+1
\]
then the forms $\ell_{0},\ell_{1},\ldots,\ell_{|\nn|}$ are a regular sequence on $S(\bb;\dd)$ as an $R$-module. 
\end{prop}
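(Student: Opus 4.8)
The plan is to prove this by a double induction on $n_1$ and $n_2$, reducing to the base case $\nn=(1,1)$ (and the degenerate cases where one coordinate is as small as possible) where the section module is explicitly understood. The key structural tool is Lemma~\ref{lem:induction-ideal}: quotienting by $x_{n_1}$ or $y_{n_2}$ takes the pair $(\R(\nn,\dd), S)$ to $(\R(n_1-1,n_2,\dd), S')$ (respectively $(\R(n_1,n_2-1,\dd),S')$), and one checks that $x_{n_1}$ (viewed appropriately inside the Veronese module) and $y_{n_2}$ behave like regular elements modulo the $\ell_t$'s already accounted for. Concretely, I would first dualize the problem: since $\ell_0,\dots,\ell_{|\nn|}$ is a sequence of $|\nn|+1 = \dim R - (\text{something})$ linear forms and $S(\bb;\dd)$ has Krull dimension $|\nn|+1$ as an $R$-module (it is the coordinate ring of $\P^{\nn}$ with a twist), the sequence is regular on $S(\bb;\dd)$ if and only if $S(\bb;\dd)$ is Cohen-Macaulay \emph{and} the $\ell_t$ form a system of parameters, i.e. $\overline{S}(\bb;\dd) = S(\bb;\dd)/\L S(\bb;\dd)$ is a finite-dimensional $\K$-vector space (Artinian). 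So the heart of the matter is: (i) $S(\bb;\dd)$ is Cohen-Macaulay as an $R$-module precisely under the stated hypotheses on $\bb,\dd,\nn$; (ii) the $\ell_t$ form an s.o.p.

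For (i), Cohen-Macaulayness of the Veronese-type module $S(\bb;\dd)$ should be detected by local cohomology: $S(\bb;\dd)$ is CM of dimension $|\nn|+1$ iff $H^i_{\m_R}(S(\bb;\dd)) = 0$ for $i < |\nn|+1$, and by the standard identification of local cohomology of section modules with sheaf cohomology, $H^i_{\m_R}(S(\bb;\dd))_k \cong H^{i-1}\big(\P^{\nn}, \O_{\P^{\nn}}(k\dd+\bb)\big)$ for $i \geq 2$ (with the usual corrections in degrees $0,1$). The intermediate cohomology $H^j(\P^{\nn},\O_{\P^{\nn}}(k\dd+\bb))$ for $0 < j < |\nn|$ is computed by the Künneth formula exactly as in Proposition~\ref{prop:regularity}: it is supported in $j = n_1$ or $j = n_2$, and it is nonzero for some $k$ precisely when there exists $k\in\Z$ with $k d_1 + b_1 \leq -n_1-1$ and $k d_2 + b_2 \geq 0$ (the $H^{n_1}$ term), or the symmetric condition (the $H^{n_2}$ term). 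Chasing through: such a $k$ exists iff $\lceil -b_2/d_2 \rceil \leq \lfloor (-n_1-1-b_1)/d_1 \rfloor$, and after clearing denominators this inequality has \emph{no} integer solution precisely when $\tfrac{d_1}{d_2} b_2 - b_1 < n_1+1$; symmetrically for the other. So the two displayed inequalities are exactly the CM conditions. (One must also handle the top cohomology $H^{|\nn|}$ contributing to $H^{|\nn|+1}_{\m_R}$, which is fine since it sits in the right degree, and the bottom $H^0$/$H^1$ corrections, which hold since $\O_{\P^{\nn}}(k\dd+\bb)$ is globally generated / has no $H^1$ for $k\gg 0$ and the relevant bookkeeping of the module structure.)

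For (ii), I would argue by the induction outlined above, peeling off $x_{n_1}$ and $y_{n_2}$: by Lemma~\ref{lem:induction-ideal} the ideal $\R(\nn,\dd) + \langle x_{n_1}, y_{n_2}\rangle$ restricts to $\R(n_1-1,n_2-1,\dd)$ on the smaller polynomial ring, so modulo $\L + (\overline{x_{n_1}}, \overline{y_{n_2}})$ the module becomes $\overline{S'}(\bb;\dd)$ for $\P^{n_1-1}\times\P^{n_2-1}$; by induction this is Artinian, and since we are adding back only two more variable-images, the original $\overline{S}(\bb;\dd)$ has dimension at most $2$ over $\overline{S'}(\bb;\dd)$-Artinian, hence one needs the finiteness bootstrap — this is where I expect the \textbf{main obstacle} to be: one must verify that $\overline{x_{n_1}}$ and $\overline{y_{n_2}}$ are themselves nilpotent (or act locally nilpotently) in $\overline{S}(\bb;\dd)$, equivalently that high powers $x_{n_1}^N$ (as elements of the appropriate graded piece) lie in $\R(\nn,\dd) + (\text{the other generators})$, which is a concrete ideal-membership computation with the specific shape of the $g_t$; the "staircase" structure of the $g_t$ (each $g_t$ links $x_i^{d_1}y_{t-i}^{d_2}$ terms) is exactly what makes this work, as in Eisenbud--Schreyer \cite{eisenbudSchreyer09}. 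The base case $\nn=(1,1)$ is the explicit computation: $\R((1,1),\dd) = \langle x_0^{d_1}y_0^{d_2},\ x_0^{d_1}y_1^{d_2}+x_1^{d_1}y_0^{d_2},\ x_1^{d_1}y_1^{d_2}\rangle$, and one checks directly that $S(\bb;\dd)/\L S(\bb;\dd)$ is finite-dimensional under $\tfrac{d_1}{d_2}b_2 - b_1 < 2$ and $\tfrac{d_2}{d_1}b_1 - b_2 < 2$, e.g. by exhibiting an explicit monomial basis or a degree bound past which every monomial of bidegree $k\dd+\bb$ is reducible. Finally, combining (i) and (ii): the $\ell_t$ are an s.o.p.\ on a CM module, hence a regular sequence, which is the claim. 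I would also remark that the case $\dd=\one$, $\bb=\zero$ is \cite[Proposition~5.2]{eisenbudSchreyer09}, which serves as a sanity check on the base of the induction.
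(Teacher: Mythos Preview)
Your overall strategy---show (i) $S(\bb;\dd)$ is Cohen--Macaulay under the stated inequalities and (ii) the $\ell_t$ form a system of parameters---matches the paper exactly, and your argument for (i) via K\"unneth and local cohomology is essentially the content of Proposition~\ref{prop:cohen-macaulay}.

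For (ii) the paper takes a much shorter route than your induction on $\nn$. Since being a system of parameters is a condition on $\supp_R\overline{S}(\bb;\dd)$, and $\supp_R S(\bb;\dd)\subset\supp_R S(\zero;\dd)$, one immediately reduces to $\bb=\zero$. Then the finite morphism $\psi\colon\P^{\nn}\to\P^{\nn}$ given by $x_i\mapsto x_i^{d_1}$, $y_j\mapsto y_j^{d_2}$ pulls the ideal sheaf of the $g_t(\one)$ back to that of the $g_t(\dd)$, so emptiness of the common zero locus for arbitrary $\dd$ follows from the case $\dd=\one$, which is \cite[Proposition~5.2]{eisenbudSchreyer09}. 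No induction on $\nn$ is needed.

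Your inductive scheme, by contrast, has a genuine gap at exactly the point you flag, and the proposed fix does not work as stated. The element $x_{n_1}$ has bidegree $(1,0)$ and hence does not act on the $R$-module $S(\bb;\dd)$ at all; the phrase ``$\overline{x_{n_1}}$ nilpotent in $\overline{S}(\bb;\dd)$'' is therefore not well-posed. If instead you work in $\overline{S}=S/\R$, note that $\R_{(N,0)}=0$ for every $N$ (all generators sit in bidegree $\dd$), whence $\overline{S}_{(N,0)}=S_{(N,0)}\neq 0$ and $x_{n_1}$ is not nilpotent there either. Nor does your reformulation ``$x_{n_1}^N\in\R+(\text{the other generators})$'' hold: modulo $\langle x_0,\dots,x_{n_1-1},y_0,\dots,y_{n_2-1}\rangle$ the ideal $\R$ reduces to $\langle x_{n_1}^{d_1}y_{n_2}^{d_2}\rangle$, which never contains a pure power of $x_{n_1}$. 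The induction \emph{can} be rescued, but only by rephrasing it geometrically: any common zero of the $g_t$ in $\P^{\nn}$ must lie in $\{x_{n_1}=0\}\cup\{y_{n_2}=0\}$ because $g_{|\nn|}=x_{n_1}^{d_1}y_{n_2}^{d_2}$ is a single monomial, and then Lemma~\ref{lem:induction-ideal} handles each hyperplane by induction on $|\nn|$. That argument is correct, but it amounts to reproving the Eisenbud--Schreyer statement directly rather than reducing to it.
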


A key part of the Proposition~\ref{prop:regular-sequence} is the following characterization of when $S(\bb;\dd)$ is Cohen-Macaulay as an $R$-module. In particular, the inequalities appearing in Proposition~\ref{prop:regular-sequence} are needed as they exactly describe when $S(\bb;\dd)$ is Cohen-Macaulay as an $R$-module. This is a major difference between a product of projective spaces and a single projective space, as in the case of a single projective space the equivalent of $S(\bb;\dd)$ is always Cohen-Macaulay \cite{einErmanLazarsfeld16}. 

\begin{prop}\label{prop:cohen-macaulay}
Fix $\nn=(n_1,n_2)\in \Z_{\geq1}^2$, $\dd=(d_1,d_2)\in\Z^2_{\geq1}$, and $\bb=(b_1,b_2)\in\Z^2$. $S(\bb;\dd)$ is Cohen-Macaulay as an $R$-module if and only if:
\[
\frac{d_1}{d_2}b_2-b_1<n_1+1\quad \quad \text{and} \quad \quad \frac{d_2}{d_1}b_1-b_2<n_2+1.
\]
\end{prop}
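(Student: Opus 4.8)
\textbf{Proof proposal for Proposition~\ref{prop:cohen-macaulay}.}

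The plan is to compute the depth of $S(\bb;\dd)$ as an $R$-module using local cohomology and the graded local duality / Künneth machinery already invoked in the proof of Proposition~\ref{prop:regularity}. Since $R$ is a standard graded polynomial ring with homogeneous maximal ideal $\m_R$, and $S(\bb;\dd)$ has Krull dimension $|\nn|+1$ (it is the section ring of the ample bundle $\O_{\P^\nn}(\dd)$ twisted by a line bundle, hence finite over a polynomial subring in $|\nn|+1$ variables), the module $S(\bb;\dd)$ is Cohen--Macaulay precisely when $H^i_{\m_R}(S(\bb;\dd)) = 0$ for all $i < |\nn|+1$. First I would translate these local cohomology modules into sheaf cohomology on $\P^\nn$: by the standard description of the section module of a projective morphism, for $i \geq 1$ one has
\[
H^{i+1}_{\m_R}\big(S(\bb;\dd)\big)_k \;\cong\; H^i\big(\P^\nn, \O_{\P^\nn}(k\dd + \bb)\big),
\]
with the $i=0$ and $i=1$ pieces accounting for the (finite-length) discrepancy between $S(\bb;\dd)$ and $\bigoplus_k H^0(\P^\nn,\O_{\P^\nn}(k\dd+\bb))$, which vanishes in high degree and contributes nothing to depth beyond the bottom. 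So the only obstructions to Cohen--Macaulayness in the range $2 \le i+1 \le |\nn|$ come from $H^{n_1}$, $H^{n_2}$, and $H^{|\nn|}$ of line bundles on $\P^\nn$, exactly as in Proposition~\ref{prop:regularity}.

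Next I would run the Künneth formula on $\P^{n_1}\times\P^{n_2}$: $H^i(\P^\nn,\O(k\dd+\bb)) = \bigoplus_{a+b=i} H^a(\P^{n_1},\O(kd_1+b_1))\otimes H^b(\P^{n_2},\O(kd_2+b_2))$, which is nonzero only for $(a,b)\in\{(0,0),(n_1,0),(0,n_2),(n_1,n_2)\}$. The top group $H^{|\nn|}$ requires $kd_1+b_1 \le -n_1-1$ and $kd_2+b_2 \le -n_2-1$ simultaneously; since $d_1,d_2\ge 1$ these force $k<0$, and one checks there is no integer $k$ satisfying both (or if there is, it still does not obstruct depth $|\nn|+1$ since $H^{|\nn|}$ sits in cohomological degree $|\nn|+1 = \dim$, the top one). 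The genuine obstructions are $H^{n_1}$ and $H^{n_2}$: $H^{n_1}(\P^\nn,\O(k\dd+\bb)) = H^{n_1}(\P^{n_1},\O(kd_1+b_1))\otimes H^0(\P^{n_2},\O(kd_2+b_2))$ is nonzero for some $k$ iff there is an integer $k$ with $kd_1+b_1 \le -n_1-1$ and $kd_2+b_2 \ge 0$, i.e. $-b_1/d_1 - (n_1+1)/d_1 \ge k \ge -b_2/d_2$; such a $k$ exists iff the interval $[-b_2/d_2,\; -b_1/d_1-(n_1+1)/d_1]$ contains an integer. Here the sharp phrasing matters: I would argue that because $\O_{\P^\nn}(\dd)$ is very ample and one is free to note $\m_R$ is generated in degree one, the vanishing $H^{n_1}_{\m_R}$-piece being zero for \emph{all} $k$ is equivalent to the interval being empty, and an elementary but slightly delicate computation (using $d_1,d_2\ge 1$) shows emptiness is equivalent to $-b_2/d_2 > -b_1/d_1 - (n_1+1)/d_1$, i.e. $\tfrac{d_1}{d_2}b_2 - b_1 < n_1+1$. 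The symmetric analysis of $H^{n_2}$ gives the second inequality $\tfrac{d_2}{d_1}b_1 - b_2 < n_2+1$. Combining: $S(\bb;\dd)$ is Cohen--Macaulay iff both inequalities hold.

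The main obstacle I anticipate is the careful bookkeeping in the "interval contains an integer" step — making sure that the condition is exactly the clean strict inequality stated, rather than a non-uniform statement depending on divisibility of $b_i$ by $d_i$. The key observation that rescues this is that one does not merely need \emph{some} $k$ to avoid the bad range; one can exploit the fact that if the relevant local cohomology is nonzero in even a single degree $k$ then $S(\bb;\dd)$ fails to be Cohen--Macaulay, so the honest statement is: CM $\iff$ the half-open condition $\{k\in\Z : kd_1+b_1\le -n_1-1,\ kd_2+b_2\ge 0\} = \emptyset$, and I would show this set is empty iff $\tfrac{d_1}{d_2}b_2 - b_1 < n_1+1$ by clearing denominators and checking the extremal integer candidate (essentially $k = \lceil -b_2/d_2\rceil$), with the $d_i\ge 1$ hypothesis ensuring the two linear constraints in $k$ are "compatible enough" that the floor/ceiling subtleties collapse. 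I would also double-check the boundary behavior when one of the inequalities is an equality, confirming it produces a nonzero local cohomology module and hence genuine non-CM, which is what pins down the strict "$<$". Once this is in place, Proposition~\ref{prop:regular-sequence} follows immediately: a linear system of parameters on a Cohen--Macaulay module is automatically a regular sequence, and the $\ell_t$ form a system of parameters on $S(\bb;\dd)$ because their images cut out the irrelevant locus (Lemma~\ref{lem:induction-ideal} and the virtual-regular-sequence property), so the dimension drops by one with each $\ell_t$.
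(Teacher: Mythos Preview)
Your approach is essentially the same as the paper's: translate Cohen--Macaulayness into vanishing of intermediate local cohomology, pass to sheaf cohomology on $\P^{\nn}$ via the section-module description, apply K\"unneth to reduce to line bundles on the factors, and determine for which $k$ the groups $H^{n_1}$ and $H^{n_2}$ can be nonzero. You are right to flag the ``interval contains an integer'' step as the delicate point---the paper's proof treats $k$ as a real parameter and reads off the equivalence from emptiness of the real interval $[-b_2/d_2,\,-(n_1+1+b_1)/d_1]$, so your more careful handling of the extremal integer candidate is warranted (and indeed this subtlety affects only the ``only if'' direction, which is not used downstream). Your final paragraph about the $\ell_t$ forming a regular sequence is extraneous here; that is Proposition~\ref{prop:regular-sequence}, not this one.
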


Note that $S(\bb;\dd)$ is Cohen-Macaulay for all $\dd$ if $\bb=\zero$. In particular, since a product of projective spaces is a smooth toric variety, the case when $\bb=0$ follows from a far more general result of Hochster \cite{hochster72} (see also \cite[Theorem 9.2.9]{coxLittleSchenck11}). 

\begin{proof}[Proof of Proposition~\ref{prop:cohen-macaulay}]
If we write $H_{R_+}^i(S(\bb;\dd))$ for the $i$-th local cohomology module of $S(\bb;\dd)$, then $S(\bb;\dd)$ is Cohen-Macaulay if and only if $\dim S(\bb;\dd)$ is equal to $\inf\{i\in \N \; : \;H^i_{R_+}(S(\bb;\dd))\neq0\}$ \cite[Theorem~9.1]{twentyFourHours}. Moreover, since $S(\bb;\dd)$ is isomorphic to the section ring of $(\iota_{\dd})_* \O_{\P^{\n}}(\bb)$ where $\iota_{\dd}:\P^{\nn}\rightarrow{}\P^{r_{\nn,\dd}}$ is the $\dd$'uple Segre-Veronese map induced by $\O_{\P^{\nn}}(\dd)$
\[
\inf\left\{i\in \Z_{>1} \; \big| \;H^i_{R_+}(S(\bb;\dd))\neq0\right\}=\inf\left\{i\in\Z_{>1} \; \bigg| \; \begin{matrix}
H^{i-1}\left(\P^{\rr_{\nn,\dd}}, (\iota_{\dd})_* \O_{\P^{\nn}}(\bb)(k)\right)\neq0\\
\text{for some $k\in\Z$}
\end{matrix}
\right\},
\]
and so $S(\bb;\dd)$ is Cohen-Macaulay if and only if $(\iota_{\dd})_* \O_{\P^{\nn}}(\bb)$ has no intermediate cohomology \cite[Theorem~13.21]{twentyFourHours}. Since $H^{i-1}\left(\P^{\rr_{\nn,\dd}}, (\iota_{\dd})_* \O_{\P^{\nn}}(\bb)(k)\right)$ is isomorphic to $H^{i-1}\left(\P^{\nn}, \O_{\P^{\nn}}(\bb+k\dd)\right)$ by the K\"{u}nneth formula \cite[\href{https://stacks.math.columbia.edu/tag/0BEC}{Tag 0BEC}]{stacks-project} we further reduce to cohomology computation on $\P^{n_1}$ and $\P^{n_2}$. From this we see that there is no intermediate cohomology if for every $k\in \Z$: 
\begin{align}
b_1+kd_1>-(n_1+1) \quad \quad \text{or} \quad \quad b_2+kd_2<0 \label{eq:line1} \\ 
\intertext{and}
b_2+kd_2>-(n_2+1) \quad \quad \text{or} \quad \quad b_1+kd_1<0. \label{eq:line2}
\end{align}
Now note the first inequality in Equation~\eqref{eq:line1} is true for every $k>-(n_1+1+b_1)/d_1$ while the second is true for every $k<-b_2/d_2$. Thus, Equation~\eqref{eq:line1} is true for all $k$ if and only if $(n_1+1+b_1)/d_1<-b_2/d_2$. Rearranging this inequality gives the first hypothesis in the proposition statement. A similar analysis for Equation~\eqref{eq:line2} produces the second hypothesis.
\end{proof}

\begin{proof}[Proof of Proposition~\ref{prop:regular-sequence}]
By Proposition~\ref{prop:cohen-macaulay} $S(\bb;\dd)$ is Cohen-Macaulay as an $R$-module, and so showing that $\ell_{0},\ell_{1},\ldots,\ell_{|\nn|}$ is a regular sequence on $S(\bb;\dd)$ is equivalent to showing that $\ell_{0},\ell_{1},\ldots,\ell_{|\nn|}$ is part of a system of parameters. Equivalently that $\dim \overline{S}(\bb;\dd) =\dim S(\bb;\dd)-(|\nn|+1)$ \cite[Theorem~2.12]{burnsHerzog93}. Being system of parameters is a set-theoretic condition on the support of $\overline{S}(\bb;\dd)$, and since 
\[
\supp_{\overline{R}}\overline{S}(\bb;\dd)=\supp_{R} S(\bb;\dd)\otimes_R \overline{R}=\supp_{R} S(\bb;\dd)\otimes_{R}\otimes \overline{S}(0;\dd)\subset \supp_{R} S(\bb;\dd)\cap \supp \overline{S}(0;\dd)
\] we may reduce to the case when $\bb=\zero$. Now let $\cI(\dd)$ be the ideal sheaf generated by $\ell_{0},\ell_{1},\ldots,\ell_{|\nn|}$. Considering the map:
\begin{center}
\begin{tikzcd}[row sep = .75 em, column sep = 3em]
\P^{\nn}\rar{\psi}&\P^{\nn} \\ 
x_{i,j}\rar[mapsto]&x_{i,j}^{d_i}
\end{tikzcd}
\end{center}
one sees that $\psi^{*}\cI(\dd)=\cI(\one)$. Therefore, since $H^0(\cI(\dd))=\L(\nn,\dd)$ we see that we may further reduce to the case when $\dd=\one$. This case was proven in \cite[Proposition~5.2]{eisenbudSchreyer09}. 
\end{proof}

Since $\L$ is generated by a linear regular sequence on $S(\bb;\dd)$, quotienting by $\L$ does not change the cohomology of the Koszul complex of \eqref{eq:kozul-S}. 

\begin{notation}
Fix $\nn=(n_1,n_2)\in \Z_{\geq1}^2$, $\dd=(d_1,d_2)\in\Z^2_{\geq1}$, and $\bb=(b_1,b_2)\in\Z^2$. We let $K^{\overline{R}}_{p,q}(\overline{S}(\bb;\dd))$ denote the cohomology of the following chain complex
\begin{equation}
\begin{tikzcd}[column sep = 3em]
\cdots \rar{}& \Alt^{p+1}\overline{S}_{\dd}\otimes \overline{S}_{(q-1)\dd+\bb}\rar{\overline{\partial}_{p+1}}&\Alt^{p}\overline{S}_{\dd}\otimes \overline{S}_{q\dd+\bb}\rar{\overline{\partial}_p}&\Alt^{p-1}\overline{S}_{\dd}\otimes \overline{S}_{(q+1)\dd+\bb}\rar{}&\cdots
\end{tikzcd}.
\end{equation}
\end{notation}

\begin{cor}\label{cor:artinian-reduction}
Fix $\nn=(n_1,n_2)\in \Z_{\geq1}^2$, $\dd=(d_1,d_2)\in\Z^2_{\geq1}$, and $\bb=(b_1,b_2)\in\Z^2$.  If 
\[
\frac{d_1}{d_2}b_2-b_1<n_1+1\quad \quad \text{and} \quad \quad \frac{d_2}{d_1}b_1-b_2<n_2+1
\]
then for all $p,q\in \Z_{\geq0}$ there exists a natural isomorphism 
\begin{equation*}
\begin{tikzcd}[column sep = 5em]
K_{p,q}(\nn,\bb;\dd) \rar[leftrightarrow]{\sim} & K^{\overline{R}}_{p,q}(\overline{S}(\bb;\dd))
\end{tikzcd}
\end{equation*}
\end{cor}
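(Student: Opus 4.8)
The plan is to deduce Corollary~\ref{cor:artinian-reduction} from the fact that $\ell_0,\ell_1,\ldots,\ell_{|\nn|}$ is a regular sequence on $S(\bb;\dd)$, via the standard principle that quotienting a module by a linear form which is a nonzerodivisor on it (and on the relevant Koszul cohomology) does not change Koszul cohomology. More precisely, I would first invoke Proposition~\ref{prop:regular-sequence}: under the stated inequalities on $\nn$, $\dd$, $\bb$, the linear forms $\ell_0,\ldots,\ell_{|\nn|}\in R_1$ form a regular sequence on the $R$-module $S(\bb;\dd)$. Then I would recall (this is \cite[Theorem~2.20]{aprodu10}, cited at the opening of \S\ref{sec:reg-seq}) that if $M$ is a graded module over a polynomial ring, $V$ is the degree-one part of that ring, and $\ell\in V$ is a nonzerodivisor on $M$, then there is a natural isomorphism $K_{p,q}(M;V)\cong K_{p,q}(M/\ell M;\, \overline V)$ where $\overline V = V/\langle \ell\rangle$ — i.e., one may compute Koszul cohomology after an Artinian-type reduction by a linear nonzerodivisor.

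The key steps, in order, would be: (1) Identify the Koszul cohomology $K_{p,q}(\nn,\bb;\dd)$ with the cohomology of the complex \eqref{eq:kozul-S}, which the excerpt has already established; the relevant "variables" are $R_1\cong S_{\dd}$ and the module is $S(\bb;\dd)$. (2) Apply the one-step reduction lemma to $\ell_0$: since $\ell_0$ is a nonzerodivisor on $S(\bb;\dd)$, we get $K_{p,q}(\nn,\bb;\dd)\cong K_{p,q}\bigl(S(\bb;\dd)/\ell_0 S(\bb;\dd);\, R_1/\langle\ell_0\rangle\bigr)$. (3) Observe that the images $\overline{\ell}_1,\ldots,\overline{\ell}_{|\nn|}$ in $R/\langle\ell_0\rangle$ still form a regular sequence on $S(\bb;\dd)/\ell_0 S(\bb;\dd)$ — this is exactly what "regular sequence" means — so we may iterate. (4) After $|\nn|+1$ steps we land at the Koszul cohomology of $\overline{S}(\bb;\dd)=S(\bb;\dd)/\L S(\bb;\dd)$ computed with respect to $\overline{R}_1\cong \overline{S}_{\dd}$, which by the notation just introduced is precisely $K^{\overline R}_{p,q}(\overline{S}(\bb;\dd))$; here I would use the isomorphisms $\overline{R}_1\cong\overline{S}(\zero;\dd)_1\cong\overline{S}_{\dd}$ recorded in \S\ref{sec:reg-seq} to match the two descriptions of the complex. (5) Check naturality: each isomorphism in the chain is induced by the evident quotient maps, so the composite is natural in $p$ and $q$ (and compatible with any functorial operations one later wants to perform).

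The main obstacle — really the only nontrivial point — is step (2)/(3): verifying that one genuinely has the single-variable reduction statement in the precise form needed, and that the hypothesis propagates along the induction. The subtlety is that the reduction lemma requires $\ell$ to be a nonzerodivisor on the \emph{module} whose Koszul cohomology is being computed, and one must confirm that after killing $\ell_0$ the next form $\ell_1$ is still a nonzerodivisor on the quotient; this is immediate from the definition of a regular sequence, but one should also make sure the ambient "polynomial ring on $R_1$" picture degenerates correctly — i.e., that $\Alt^p \overline{R}_1 \otimes \overline{S}_{k\dd+\bb}$ is really the complex obtained from $\Alt^p R_1\otimes S_{k\dd+\bb}$ by the iterated reduction, not merely something abstractly isomorphic to it. Once the bookkeeping identifying $\overline S_{\dd}$ with $\overline R_1$ is in place (which the excerpt has set up), this is routine. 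I would therefore present the proof as: "By Proposition~\ref{prop:regular-sequence} the $\ell_i$ form a regular sequence on $S(\bb;\dd)$; now apply \cite[Theorem~2.20]{aprodu10} repeatedly, using the identification $\overline R_1\cong\overline S_{\dd}$ from \S\ref{sec:reg-seq}."

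\begin{proof}[Proof of Corollary~\ref{cor:artinian-reduction}]
By Proposition~\ref{prop:regular-sequence} the linear forms $\ell_0,\ell_1,\ldots,\ell_{|\nn|}\in R_1$ form a regular sequence on $S(\bb;\dd)$. Recall from \S\ref{sec:set-up} that $K_{p,q}(\nn,\bb;\dd)$ is computed by the Koszul complex \eqref{eq:kozul-S}, whose linear forms are $R_1\cong S_{\dd}$ and whose coefficient module is $S(\bb;\dd)$. Since quotienting a module by a linear form which is a nonzerodivisor on it does not change Koszul cohomology \cite[Theorem~2.20]{aprodu10}, applying this successively to $\ell_0,\ell_1,\ldots,\ell_{|\nn|}$ — at each stage the image of the next form remains a nonzerodivisor on the successive quotient, by definition of a regular sequence — yields a natural isomorphism between $K_{p,q}(\nn,\bb;\dd)$ and the cohomology of the complex obtained from \eqref{eq:kozul-S} by replacing $R_1$ with $\overline{R}_1 = R_1/\L_1$ and $S(\bb;\dd)$ with $\overline{S}(\bb;\dd)$. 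Under the isomorphisms $\overline{R}_1\cong\overline{S}(\zero;\dd)_1\cong\overline{S}_{\dd}$ recorded in \S\ref{sec:reg-seq}, this complex is exactly the one defining $K^{\overline{R}}_{p,q}(\overline{S}(\bb;\dd))$. Naturality in $p$ and $q$ follows since each isomorphism in the chain is induced by the evident quotient maps.
\end{proof}
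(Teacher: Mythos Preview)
Your proof is correct and follows exactly the same approach as the paper, which simply says ``Combine Proposition~\ref{prop:regular-sequence} with Theorem~2.20 from \cite{aprodu10}.'' You have merely spelled out in more detail the iteration implicit in that citation and the identification $\overline{R}_1\cong\overline{S}_{\dd}$ already recorded in \S\ref{sec:reg-seq}.
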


\begin{proof}
Combine Proposition~\ref{prop:regular-sequence} with Theorem~2.20 from \cite{aprodu10}.
\end{proof}

\section{Ideal Membership for $\R$}\label{sec:ideal-membership}

In this section, we turn our attention to describing when certain monomials are contained in the ideal $\R$ introduced in Section~\ref{sec:reg-seq}. This highlights a significant challenge when generalizing the work of Ein, Erman, and Lazarsfeld from the case of a single projective space to a product of projective spaces. Namely, since there are no monomial regular sequences of length $|\nn|+1$ on $\P^{\nn}$, we must work with a regular sequence for which, the ideal membership question is more difficult. For example, describing when a given element of $S$ is contained in $\R=\langle g_0,g_1,\ldots,g_{|\nn|}\rangle$, is more complicated then determining when an element is in $\langle x_0^{d},x_{1}^{d},\ldots,x_{n}^{d}\rangle\subset \K[x_0,x_1,\ldots,x_n]$. This section is dedicated to studying the ideal membership question for $\R$. 

Our approach to ideal membership for $\R$ is to make use of the fact that $\R$ is homogeneous with respect to a number of interesting gradings. For example, in Section~\ref{sec:mod-deg} we introduce the notion of the modular degree of an element of $S$. This induces a $\left(\Z/\langle d_1\rangle\right)^{\oplus n_1+1}\oplus\left(\Z/\langle d_2\rangle\right)^{\oplus n_2+1}$-grading on $S$ that $\R$ is homogeneous with respect to. Using this grading we show that the ideal membership question for $\R(\nn,\dd)$ can be reduced to the ideal membership question for $\R(\nn,\one)$. 

Having reduced the question of ideal membership to the case when $\dd=\one$, we then introduce the notion of the index weighted degree, which induces a non-standard $\Z$-grading on $S$. The index weighted grading allows us to discuss the $\K$-vector space $S_{\aa,k}$ spanned by monomials of bi-degree $\aa$ and index weighted degree $k$. Using this refinement together with a series of spectral sequence arguments we gain insight into the ideal membership question for $\R(\nn,\one)$. For example, we prove the following:

\begin{theorem}\label{thm:tri-deg-vanishing}
Fix $\nn=(n_1,n_2)\in \Z^2_{\geq1}$, $\aa=(a_1,a_2)\in \Z^2_{\geq0}$, and $k\in\Z_{\geq0}$. If $\aa$ and $k$ satisfy one of the following inequalities:
\begin{enumerate}
\item $a_1\geq1$ and  $a_2\geq n_1+1$,
\item $a_2\geq1$ and $a_1\geq n_{2}+1$,
\item $0\leq k\leq a_1a_2-1$, or
\end{enumerate}
then $S_{\aa,k}=\R(\one)_{\aa,k}$. Moreover, if $k=a_1a_2$ then $\dim S_{\aa,k}=\dim \R(\one)_{\aa,k}-1$.
\end{theorem}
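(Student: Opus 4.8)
\textbf{Proof Proposal for Theorem~\ref{thm:tri-deg-vanishing}.}

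The plan is to set up an induction on $n_1 + n_2$, exploiting the compatibility of the forms $g_t = g_t(\nn,\one)$ with the reductions modulo $x_{n_1}$ and $y_{n_2}$ established in Lemma~\ref{lem:induction-ideal}. The key structural input is that, when $\dd = \one$, the forms are $g_t = \sum_{i+j=t} x_i y_j$, which (after passing to the appropriate grading) generate an ideal that behaves very much like the determinantal/Eagon--Northcott setup of Eisenbud--Schreyer. Since $\ell_0,\dots,\ell_{|\nn|}$ is a regular sequence on $S(\zero;\one) = \Cox(\P^\nn)$-in-the-$\O(\one)$-grading (Proposition~\ref{prop:regular-sequence}), the quotient $\overline{S} = S/\R(\one)$ is Cohen--Macaulay of the expected dimension, and its multigraded Hilbert series is therefore computable: it is the numerator $\prod_{t=0}^{|\nn|}(1 - T)$ (in the coarse grading) against the denominator of $S$, but I want the finer $\Z^2$- and index-weighted refinement. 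The cleanest route is to compute $\dim_\K \overline{S}_{\aa,k}$ directly and show it vanishes in the stated ranges and equals $1$ when $k = a_1 a_2$.

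First I would dispose of cases (1) and (2). Here the claim is purely about the coarse $\Z^2$-grading: I would show $S_{\aa} = \R(\one)_{\aa}$ whenever $a_1 \ge 1$ and $a_2 \ge n_1+1$ (and symmetrically). Using Lemma~\ref{lem:induction-ideal}, modding out by $x_{n_1}$ reduces $\R(\nn,\one)$ to $\R(n_1-1, n_2, \one)$, so by induction on $n_1$ it suffices to handle the base case $n_1 = 0$: then $g_0 = x_0 y_0,\ g_1 = x_0 y_1,\dots$, i.e. $\R = x_0 \cdot \langle y_0,\dots,y_{n_2}\rangle$, and $S_{\aa} = \R_{\aa}$ precisely when $a_1 \ge 1$, matching the hypothesis. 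The inductive step needs the short exact sequence $0 \to \overline{S}(-\ee_1) \xrightarrow{x_{n_1}} \overline{S} \to \overline{S}/(x_{n_1}) \to 0$ — valid because $x_{n_1}$ is a nonzerodivisor on $\overline{S}$, which follows since $\overline{S}$ is Cohen--Macaulay and $x_{n_1}$ avoids its minimal primes (those being the coordinate subspaces $\langle x_\bullet\rangle$ and $\langle y_\bullet\rangle$ as in the Examples) — together with a dimension count: $\dim_\K \overline{S}_{\aa} - \dim_\K \overline{S}_{\aa - \ee_1} = \dim_\K (\overline{S}/x_{n_1})_{\aa}$, and the right side is governed by $\R(n_1-1,n_2,\one)$, which is zero in the relevant range by induction; summing a telescoping identity down to small $a_1$ then forces $\overline{S}_\aa = 0$.

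For case (3) and the $k = a_1 a_2$ statement I would bring in the index-weighted $\Z$-grading promised in Section~\ref{sec:ideal-membership}. The forms $g_t(\one)$ are homogeneous of index-weighted degree $t$ in a suitable weighting, so $\R(\one)$ is homogeneous for the total $(\Z^2 \times \Z)$-grading, and one gets a refined short exact sequence $0 \to \overline{S}(-\ee_1, -w)\xrightarrow{x_{n_1}}\overline{S}\to \overline{S}/x_{n_1}\to 0$ tracking the weight shift $w$ contributed by $x_{n_1}$. The heart of the matter is then a purely combinatorial Hilbert-series identity: I would show that $\sum_{k} \dim_\K \overline{S}_{\aa,k}\, t^k$ is, up to normalization, a Gaussian binomial coefficient $\genfrac[]{0pt}{}{a_1+a_2}{a_1}_t$ — the $q$-analogue count of lattice paths in an $a_1 \times a_2$ box — whose support is exactly $\{a_1 a_2, a_1a_2 + 1, \dots, a_1 a_2 + (\text{top degree})\}$ after the index-weighting is normalized so the minimal weight of a surviving class is $a_1 a_2$. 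This gives vanishing for $0 \le k \le a_1 a_2 - 1$ and, since the extreme coefficient of a Gaussian binomial is $1$, exactly a one-dimensional space at $k = a_1 a_2$.

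The main obstacle I anticipate is pinning down the index-weighted Hilbert series precisely — i.e., proving the Gaussian-binomial identity for $\overline{S}_{\aa,k}$ rather than just a coarse bound. The subtlety is that the $g_t$'s are \emph{not} a regular sequence on the Cox ring (only a virtual one), so I cannot naively multiply Hilbert series by $(1-t)^{|\nn|+1}$ in the fine grading; I must instead run the induction entirely through the $x_{n_1}$- and $y_{n_2}$-reduction exact sequences, carefully bookkeeping the weight shifts, and check that the resulting recursion for $\sum_k \dim \overline{S}_{\aa,k} t^k$ is exactly Pascal's rule for Gaussian binomials, $\genfrac[]{0pt}{}{a_1+a_2}{a_1}_t = \genfrac[]{0pt}{}{a_1+a_2-1}{a_1-1}_t + t^{?}\genfrac[]{0pt}{}{a_1+a_2-1}{a_1}_t$. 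Getting the exponent of $t$ in that recursion to match the weight of $x_{n_1}$ is the delicate point; a secondary concern is confirming the Cohen--Macaulayness / nonzerodivisor claims survive in the refined grading, though that should follow formally from Proposition~\ref{prop:regular-sequence} since the grading refinement does not affect which elements are zerodivisors.
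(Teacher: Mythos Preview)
Your proposal has a genuine gap at its foundation: the claim that $x_{n_1}$ is a nonzerodivisor on $\overline{S}=S/\R(\one)$ is false. As the paper itself notes (see the example following the discussion of virtual regular sequences), the prime $\langle x_0,\ldots,x_{n_1}\rangle$ is an \emph{associated} prime of $S/\R$, so every $x_i$ is a zerodivisor on $\overline{S}$. Concretely, for $\nn=(1,1)$ one has $x_1\cdot y_1=g_2=0$ in $\overline{S}$ with $y_1\neq 0$. Your justification---that $\overline{S}$ is Cohen--Macaulay and $x_{n_1}$ avoids its minimal primes---fails on both counts: when $n_1\neq n_2$ the minimal primes $\langle x_\bullet\rangle$ and $\langle y_\bullet\rangle$ have different heights, so $\overline{S}$ is not even equidimensional; and $x_{n_1}$ lies \emph{in} the minimal prime $\langle x_\bullet\rangle$. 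Consequently the sequence $0\to\overline{S}(-\ee_1)\xrightarrow{x_{n_1}}\overline{S}\to\overline{S}/x_{n_1}\to 0$ is only right exact, and your Pascal-type recursion for the refined Hilbert series collapses to an inequality.

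Even granting the exact sequence, your telescoping argument for parts (1) and (2) still breaks: iterating $\dim\overline{S}_{\aa}=\dim\overline{S}_{\aa-\ee_1}$ down in $a_1$ terminates at $\dim\overline{S}_{(0,a_2)}$, which equals $\dim S_{(0,a_2)}\neq 0$ since $\R(\one)$ has no elements of bidegree $(0,\ast)$. So the induction on $n_1$ alone cannot force vanishing. The paper circumvents this entirely by running the hypercohomology spectral sequence for the Koszul complex of the $g_t$ on $\P^{\nn}$ twisted by $\O(\aa)$: the K\"unneth formula pins down exactly which $E^1_{p,q}$ survive, and one reads off directly that $E^2_{0,0}\cong\overline{S}_\aa$ receives no nonzero differentials under hypotheses (1) or (2). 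This spectral-sequence input is not a matter of taste; it (or something like it) is essential, and it recurs in the base case $\aa=(n_2,n_1)$ for the ``moreover'' statement. For part (3) the paper does use the multiplication-by-$x_{n_1}$ sequence, but only the resulting \emph{inequality} $\dim\overline{S}_{\aa,k}\le\dim\overline{S}_{(a_1-1,a_2),k-n_1}+\dim(\overline{S}/x_{n_1})_{\aa,k}$ is needed, and the first term is killed by the already-established (1)/(2) rather than by further telescoping. Your Gaussian-binomial picture for the refined Hilbert series is attractive and may well be true, but establishing it would require controlling the kernel of $\cdot\,x_{n_1}$ in each tridegree, which is exactly what the right-exact sequence does not give you.
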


Combining these arguments gives us a detailed understanding of what monomials are in $\R(\dd)$. This, in turn, allows us to understand the ideal quotient $(\R:_{S}f)$ for particular polynomials $f\in S$, and this provides the range of non-vanishing appearing in Theorem~\ref{thm:main}. 

\subsection{The Modular Degree on $S$}\label{sec:mod-deg}

Throughout this section given $\vv=(v_1,v_2\ldots,v_n)\in \Z^{n}$ and $d\in \Z$ we write $\vv\bmod d$ to mean $(v_1\bmod d,v_2\bmod d,\ldots,v_n\bmod d)\in \left(\Z/\langle d\rangle \right)^{n}$. Further we use the following multi-index notation for monomials in $S$ and $\overline{S}$.

\begin{notation}
Given $\vv=(v_0,v_1,\ldots,v_{n_1})\in \Z^{n_1+1}_{\geq 0}$ and $\ww=(w_0,w_1,\ldots,w_{n_2})\in \Z^{n_2+1}_{\geq 0}$ write $\xx^{\vv}\yy^{\ww}$ for the monomial:
\[
\xx^{\vv}\yy^{\ww}=\prod_{i=0}^{n_1}x_i^{v_i}\prod_{j=0}^{n_2}y_j^{w_j}\in S.
\]
\end{notation}

With this notation in hand, we define the modular degree of a monomial in $S$ as follows.

\begin{defn}
Fix $\nn=(n_1,n_2)\in \Z^2_{\geq1}$ and $\dd=(d_1,d_2)\in \Z^2_{\geq1}$. Given a monomial $\xx^{\vv}\yy^{\ww}\in S$ we define its modular degree to be:
\[
\moddeg\left(\xx^{\vv}\yy^{\ww}\right)=(\vv\bmod d_1,\ww\bmod d_2)\in \left(\Z/\langle d_1\rangle\right)^{\oplus n_1+1}\oplus\left(\Z/\langle d_2\rangle\right)^{\oplus n_2+1}.
\]
\end{defn}

Immediate from the definition we see that the modular degree induces a $\left(\Z/\langle d_1\rangle\right)^{\oplus n_1+1}\oplus\left(\Z/\langle d_2\rangle\right)^{\oplus n_2+1}$-grading on $S$ as follows
\[
S\cong \bigoplus_{\alpha \in \left(\Z/\langle d_1\rangle\right)^{\oplus n_1+1}\oplus\left(\Z/\langle d_2\rangle\right)^{\oplus n_2+1}} S_{\alpha}
\]
where $S_{\alpha}$ is the $\K$-vector space spanned by monomials $m\in S$ such that $\moddeg(m)=\alpha$. We call this the modular grading, and $\R$ is homogeneous with respect to it. 

\begin{lemma}\label{lem:mod-deg-homg}
Fix $\nn=(n_1,n_2)\in \Z^2_{\geq1}$ and $\dd=(d_1,d_2)\in \Z^2_{\geq1}$. The modular degree gives $S$ a $\left(\Z/\langle d_1\rangle\right)^{\oplus n_1+1}\oplus\left(\Z/\langle d_2\rangle\right)^{\oplus n_2+1}$-grading. Moreover, the ideal $\R$ is homogeneous with respect to this grading.
\end{lemma}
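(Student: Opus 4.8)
The plan is to verify the two assertions of Lemma~\ref{lem:mod-deg-homg} directly: first that the modular degree is additive on products of monomials, so that $S_\alpha \cdot S_\beta \subseteq S_{\alpha+\beta}$ and hence the decomposition $S \cong \bigoplus_\alpha S_\alpha$ is a ring grading; and second that each generator $g_t$ of $\R$ is homogeneous with respect to this grading.

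For the grading claim, observe that if $m = \xx^{\vv}\yy^{\ww}$ and $m' = \xx^{\vv'}\yy^{\ww'}$ are monomials in $S$, then $mm' = \xx^{\vv+\vv'}\yy^{\ww+\ww'}$, and since reduction modulo $d_1$ (resp. $d_2$) is a ring homomorphism $\Z \to \Z/\langle d_1\rangle$ (resp. $\Z \to \Z/\langle d_2\rangle$), we have $(\vv+\vv') \bmod d_1 = (\vv \bmod d_1) + (\vv' \bmod d_1)$ and likewise for the $y$-part. Therefore $\moddeg(mm') = \moddeg(m) + \moddeg(m')$, which gives $S_\alpha \cdot S_\beta \subseteq S_{\alpha+\beta}$. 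Since the monomials form a $\K$-basis of $S$ and each monomial lies in exactly one graded piece $S_\alpha$ by definition, the direct sum decomposition holds, and the above multiplicativity makes it a grading by the monoid $\left(\Z/\langle d_1\rangle\right)^{\oplus n_1+1}\oplus\left(\Z/\langle d_2\rangle\right)^{\oplus n_2+1}$.

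For homogeneity of $\R$, it suffices to show each $g_t = \sum_{i+j=t} x_i^{d_1} y_j^{d_2}$ is homogeneous, i.e.\ that all its monomial terms have the same modular degree. The term $x_i^{d_1} y_j^{d_2}$ has exponent vector $\vv = d_1 e_i$ on the $x$-side and $\ww = d_2 e_j$ on the $y$-side (where $e_i$ denotes the standard basis vector). Then $\vv \bmod d_1 = d_1 e_i \bmod d_1 = \zero$ in $\left(\Z/\langle d_1\rangle\right)^{\oplus n_1+1}$, and similarly $\ww \bmod d_2 = \zero$ in $\left(\Z/\langle d_2\rangle\right)^{\oplus n_2+1}$. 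Hence every monomial appearing in every $g_t$ has modular degree equal to the zero element, so each $g_t \in S_{\zero}$; in particular each $g_t$ is homogeneous, and therefore the ideal $\R = \langle g_0,\dots,g_{|\nn|}\rangle$ they generate is a homogeneous ideal with respect to the modular grading (a more careful statement: $\R$ is generated by elements of $S_{\zero}$, hence $\R = \bigoplus_\alpha \R_\alpha$ with $\R_\alpha = \R \cap S_\alpha$).

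There is essentially no obstacle here — the lemma is a routine bookkeeping verification — so the only thing to be careful about is making the two-part indexing ($x$-variables reduced mod $d_1$, $y$-variables mod $d_2$) explicit and checking that the generators land in the trivial graded component, which is what makes the modular grading useful for the ideal-membership reduction in Section~\ref{sec:mod-deg}.
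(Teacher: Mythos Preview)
Your proof is correct and follows essentially the same approach as the paper: verify that $\moddeg$ is additive on products of monomials (so the decomposition is a grading), then check that every monomial term $x_i^{d_1}y_j^{d_2}$ appearing in each generator $g_t$ has modular degree $\zero$. Your write-up is slightly more explicit about the direct-sum decomposition and the exponent-vector computation, but the argument is the same.
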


\begin{proof}
To show that the modular degree induces a grading on $S$ it is enough to show that if $m,m'\in S$ are monomials then $\moddeg(m\cdot m')$ is equal to $\moddeg(m)+\moddeg(m')$. This follows from the fact that $
\Z/\langle d_i\rangle$ is an abelian group. 

Shifting to showing that $\R=\langle g_{0},g_{1},\ldots,g_{|\nn|}\rangle$ is homogeneous with respect to this grading it is enough to show that each of the generators are homogeneous. Towards this recall that 
\[ 
g_{t}=\sum_{\substack{i+j=t\\0\leq i \leq n_1\\ 0 \leq j \leq n_2}}x_{i}^{d_1}y_j^{d_2},
\] 
and so each term in $g_{t}$ has modular degree $\zero$ meaning $g_{t}$ is homogeneous with respect to this grading.
\end{proof}

The key property of the modular grading on $S$ is that thinking of $S^{[\dd]}\coloneqq \K[x_0^{d_1},x_1^{d_1},\ldots,x_{n_{1}}^{d_{1}},y_{0}^{d_{2}},y_{1}^{d_{2}},\ldots,y_{n_2}^{d_{2}}]$ as a sub-ring of $S$ then $S_{\alpha}$ is a free rank one $S^{[\dd]}$-module for every $\alpha \in \left(\Z/\langle d_1\rangle\right)^{\oplus n_1+1}\oplus\left(\Z/\langle d_2\rangle\right)^{\oplus n_2+1}$. Given a monomial $\xx^{\vv}\yy^{\ww}\in S_{\alpha}$ then by the division algorithm we may write $v_i = q_{i}d_1+r_{i}$ and $w_i = q'_{i}d_2+r'_2$ where $0\leq r_i < d_1$ and $0\leq r'_i < d_2$. This allows us to write $\xx^{\vv}\yy^{\ww}$ as
\[
\xx^{\vv}\yy^{\ww} = \underbrace{\left(\prod_{i=0}^{n_1}x_i^{r_i}\prod_{i=0}^{n_2}y_i^{r'_i}\right)}_{\text{I}} \underbrace{\left(\prod_{i=0}^{n_1}x_i^{q_{i}d_1}\prod_{i=0}^{n_2}y_{i}^{q'_{i}d_2}\right)}_{\text{II}},
\]
where II is a monomial in $S^{[\dd]}$ and I is a monomial determined entirely by the modular degree of $\xx^{\vv}\yy^{\ww}$. 

\begin{defn}
Given a monomial $\xx^{\vv}\yy^{\ww}\in S$  we define $\remd(\xx^{\vv}\yy^{\ww})$ to be the monomial
\[
\remd\left(\xx^{\vv}\yy^{\ww}\right)=\left(\prod_{i=0}^{n_1}x_i^{r_i}\prod_{i=0}^{n_2}y_i^{r'_i}\right)\in S,
\]
where by the division algorithm $v_i = q_{i}d_1+r_{i}$ and $w_i = q'_{i}d_2+r'_2$ with $0\leq r_i < d_1$ and $0\leq r'_i < d_2$ .
\end{defn}

\begin{example}
Let $\nn=(1,2)$ so that $S=\K[x_0,x_1,y_0,y_1,y_2]$ and set $\dd=(3,5)$. If $f=x_{0}^{5}x_1^{3}y_{0}^{5}y_{1}^{11}y_{2}^{8}$ then the modular degree of $f$ is $((2,0),(1,1,3))$ and $\remd(f)=x_0^2y_0y_1y_2^3$. Any element of modular degree $((2,0),(1,1,3))$ can be written as $x_0^2y_0y_1y_2^3$ times an element of $S^{[\dd]}$. For example, $g=x_0^{11}y_0y_1^{6}y_2^{103}$ also has modular degree $((2,0),(1,1,3))$ , and we may write it as $\remd(f)\cdot x_0^{3\cdot 3}y_1^{5}y_2^{20\cdot 5}$.
\end{example}

\begin{lemma}
Fix $\nn=(n_1,n_2)\in \Z^2_{\geq1}$ and $\dd=(d_1,d_2)\in \Z^2_{\geq1}$. For any $\alpha\in  \left(\Z/\langle d_1\rangle\right)^{\oplus n_1+1}\oplus\left(\Z/\langle d_2\rangle\right)^{\oplus n_2+1}$ the vector space $S_{\alpha}$ is a free rank one $S^{[\dd]}$-module, which is generated by $\remd(f)$ for any $f\in S_{\alpha}$.
\end{lemma}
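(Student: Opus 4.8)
The statement is a routine unwinding of the modular grading together with the division-algorithm decomposition already exhibited just before the lemma, so I would organize the proof around that decomposition. First I would fix $\alpha\in \left(\Z/\langle d_1\rangle\right)^{\oplus n_1+1}\oplus\left(\Z/\langle d_2\rangle\right)^{\oplus n_2+1}$, pick an arbitrary $f\in S_{\alpha}$, and observe that by Lemma~\ref{lem:mod-deg-homg} the space $S_{\alpha}$ is spanned by monomials, so it suffices to understand an arbitrary monomial $m=\xx^{\vv}\yy^{\ww}$ with $\moddeg(m)=\alpha$.

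\textbf{Key steps.} The core point is that $\moddeg(m)=\alpha$ forces, via the uniqueness of the remainder in the division algorithm, that $\remd(m)$ depends only on $\alpha$ and not on the choice of $m$; in particular $\remd(m)=\remd(f)$ for every monomial $m\in S_{\alpha}$. Granting this, the decomposition displayed before the lemma writes $m=\remd(m)\cdot\mu = \remd(f)\cdot\mu$ where $\mu=\prod_i x_i^{q_id_1}\prod_i y_i^{q'_id_2}\in S^{[\dd]}$, so every monomial of $S_{\alpha}$, hence every element of $S_{\alpha}$, lies in $\remd(f)\cdot S^{[\dd]}$. Conversely, since $\moddeg$ is additive (again Lemma~\ref{lem:mod-deg-homg}) and every monomial of $S^{[\dd]}$ has modular degree $\zero$, multiplication $S^{[\dd]}\to S_{\alpha}$, $\mu\mapsto \remd(f)\cdot\mu$, lands in $S_{\alpha}$; it is injective because $S$ is a domain (or simply because distinct monomials multiply to distinct monomials), and the previous sentence shows it is surjective. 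Thus $S_{\alpha}\cong S^{[\dd]}$ as $S^{[\dd]}$-modules via this map, which is exactly the assertion that $S_{\alpha}$ is free of rank one generated by $\remd(f)$.

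\textbf{Main obstacle.} There is no real obstacle: the only thing to be careful about is the bookkeeping that $\remd(m)$ is genuinely an invariant of the modular degree. Concretely, if $m=\xx^{\vv}\yy^{\ww}$ then $\remd(m)=\xx^{\rr}\yy^{\rr'}$ where $r_i\equiv v_i\pmod{d_1}$ with $0\le r_i<d_1$ and $r'_i\equiv w_i\pmod{d_2}$ with $0\le r'_i<d_2$; since $\alpha=(\vv\bmod d_1,\ww\bmod d_2)$ determines each $r_i$ and $r'_i$ uniquely, $\remd(m)$ is a function of $\alpha$ alone. I would state this as a one-line observation and then assemble the two inclusions above. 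The whole argument is a few lines.
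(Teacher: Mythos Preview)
Your proposal is correct and follows essentially the same approach as the paper: both arguments use the division-algorithm decomposition to show that $\remd$ is an invariant of the modular degree $\alpha$, and then deduce that multiplication by $\remd(f)$ gives a bijection between monomials of $S^{[\dd]}$ and monomials of $S_{\alpha}$. The paper separates out the verification that $S_{\alpha}$ is an $S^{[\dd]}$-module before turning to freeness, while you fold this into the observation that elements of $S^{[\dd]}$ have modular degree $\zero$, but this is only a minor organizational difference.
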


\begin{proof}
First let us check that $S_{\alpha}$ has the structure of a $S^{[\dd]}$-module. This amounts to showing that $S_{\alpha}$ is closed under multiplication by elements in $S^{[\dd]}$. Since $S^{[\dd]}$ is generated by monomials of bi-degree $\dd$ we further reduce to showing $S_{\alpha}$ is closed under multiplication by $x_{i}^{d_1}$ and $y_j^{d_2}$ for $i=0,1,\ldots,n_1$ and $j=0,1,\ldots,n_2$. This follows immediately from the definition of the modular degree.

Turing our attention to showing that $S_{\alpha}$ is free of rank one fix $\alpha\in  \left(\Z/\langle d_1\rangle\right)^{\oplus n_1+1}\oplus\left(\Z/\langle d_2\rangle\right)^{\oplus n_2+1}$, let $\xx^{\vv}\yy^{\vv}\in S_{\alpha}$ be a monomial. By the division algorithm we may write $v_i = q_{i}d_1+r_{i}$ and $w_i = q'_{i}d_2+r'_2$ where $0\leq r_i < d_1$ and $0\leq r'_i < d_2$. One readily checks that
\[
\xx^{\vv}\yy^{\ww} = \left(\prod_{i=0}^{n_1}x_i^{r_i}\prod_{i=0}^{n_2}y_i^{r'_i}\right) \left(\prod_{i=0}^{n_1}x_i^{q_{i}d_1}\prod_{i=0}^{n_2}y_{i}^{q'_{i}d_2}\right)=\remd\left(\xx^{\vv}\yy^{\ww}\right)\left(\prod_{i=0}^{n_1}x_i^{q_{i}d_1}\prod_{i=0}^{n_2}y_{i}^{q'_{i}d_2}\right).
\]
Moreover, since $r_{i}$ and $r'_{i}$ determine the modular degree of $\xx^{\vv}\yy^{\ww}$ we see that every monomial in $S_{\alpha}$ is of the form $m\cdot\remd(\xx^{\vv}\yy^{\ww})$ for a unique $m\in S^{[\dd]}$. 
\end{proof}

We now state a few basic properties regarding $\remd(f)$ that follows immediately from the previous lemma. 

\begin{lemma}\label{lem:round-down-properties}
Fix $\nn=(n_1,n_2)\in \Z^2_{\geq1}$ and $\dd=(d_1,d_2)\in \Z^2_{\geq1}$. If $f,g\in S$ are homogeneous with respect to the modular grading then
\begin{enumerate}
\item $f$ is divisible by $\remd(f)$,
\item $f/\remd(f) \in S^{[\dd]}$, and
\item $\moddeg f = \moddeg g$ if and only if $\remd(f) = \remd(g)$.
\end{enumerate}
\end{lemma}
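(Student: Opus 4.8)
The statement to prove, Lemma~\ref{lem:round-down-properties}, asserts three basic properties of $\remd$: (1) $\remd(f)$ divides $f$, (2) $f/\remd(f) \in S^{[\dd]}$, and (3) $\moddeg f = \moddeg g$ iff $\remd(f) = \remd(g)$. All three follow quickly from the preceding lemma identifying $S_\alpha$ as a free rank-one $S^{[\dd]}$-module generated by $\remd(f)$, but one must be slightly careful since $f, g$ are only assumed homogeneous with respect to the modular grading, not necessarily monomials.

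The plan is as follows. First I would reduce to the monomial case: a modular-homogeneous element $f \in S_\alpha$ is a $\K$-linear combination of monomials all of modular degree $\alpha$, and $\remd$ is constant on $S_\alpha$ (it depends only on the residues $r_i, r_i'$, which are determined by $\alpha$), so $\remd(f)$ is well-defined and equals $\remd(m)$ for any monomial $m$ appearing in $f$. For part (1), by the preceding lemma each monomial $m$ in $f$ can be written as $\remd(m) \cdot (\text{monomial in } S^{[\dd]}) = \remd(f) \cdot (\text{monomial in } S^{[\dd]})$; summing, $f = \remd(f) \cdot h$ where $h \in S^{[\dd]}$, so $\remd(f) \mid f$. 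Part (2) is then immediate from the same computation: $f/\remd(f) = h \in S^{[\dd]}$.

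For part (3), the forward direction is clear since $\remd(f)$ is determined by $\moddeg(f) = \alpha$ via the division algorithm (the exponents $r_i \in \{0,\dots,d_1-1\}$ and $r_i' \in \{0,\dots,d_2-1\}$ are exactly the components of $\alpha$ lifted to their canonical representatives). For the converse, if $\remd(f) = \remd(g) = \xx^{\rr}\yy^{\rr'}$ with $0 \le r_i < d_1$ and $0 \le r_i' < d_2$, then reading off the exponents of this monomial modulo $(d_1, d_2)$ recovers $\alpha$, so $\moddeg f = \moddeg g$; here one uses that the reduction map sending $(r_i, r_i')$ with $r_i < d_1$, $r_i' < d_2$ to $\left(\Z/\langle d_1\rangle\right)^{\oplus n_1+1}\oplus\left(\Z/\langle d_2\rangle\right)^{\oplus n_2+1}$ is a bijection.

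I do not anticipate any genuine obstacle: the content is entirely bookkeeping with the division algorithm and the rank-one freeness already established. The only point requiring a word of care is confirming that $\remd$ is well-defined on all of $S_\alpha$ (not just on monomials), i.e. that it does not depend on which monomial of $f$ one picks — but this is immediate because the residues $r_i, r_i'$ are functions of $\alpha$ alone. So the "hard part," such as it is, is simply stating the monomial-to-general reduction cleanly and invoking the previous lemma for the divisibility and the $S^{[\dd]}$-membership simultaneously.
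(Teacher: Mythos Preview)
Your proposal is correct and matches the paper's approach exactly: the paper gives no proof at all for this lemma, merely stating that the properties ``follow immediately from the previous lemma'' (the rank-one freeness of $S_\alpha$ over $S^{[\dd]}$), which is precisely what you unpack. Your extra care in extending $\remd$ from monomials to arbitrary modular-homogeneous elements is a detail the paper leaves implicit.
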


Finally, the following proposition shows how the modular grading can be used to reduce the ideal membership question for $\R(\dd)$ to the ideal membership question for $\R(\one)$. Before stating it, however, we fix the following notation that given a monomial $m\in S^{[\dd]}$ we let $m^{1/\dd}$ be the monomial that is the image of $m$ under the isomorphism:
\[
\begin{tikzcd}
S^{[\dd]}\arrow[rr,"^{1/\dd}"]& &S & & x_i^{d_1}\rar[mapsto] &x_i & y_i^{d_1}\rar[mapsto] & y_i 
\end{tikzcd}.
\]

\begin{prop}\label{prop:containment}
Fix $\nn=(n_1,n_2)\in \Z^2_{\geq1}$ and $\dd=(d_1,d_2)\in \Z^2_{\geq1}$. Let $f\in S$ be homogeneous with respect to the modular grading then $f\in \R(\dd)$ if and only if $\left( f/\remd(f)\right)^{1/\dd} \in \R(\one)$.
\end{prop}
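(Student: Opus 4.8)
The plan is to reduce the whole question to the single graded piece of the modular grading in which $f$ lives, and there to recognize $\R(\dd)$ as a $\remd(f)$-multiple of the ideal of $S^{[\dd]}$ generated by $g_0,\dots,g_{|\nn|}$, which the substitution $(-)^{1/\dd}$ then carries onto $\R(\one)$. Write $\alpha=\moddeg(f)$. Since $\R=\R(\dd)$ is homogeneous for the modular grading (Lemma~\ref{lem:mod-deg-homg}) and $f$ is modular-homogeneous, we have $f\in\R(\dd)$ if and only if $f$ lies in the graded piece $\R(\dd)_\alpha:=\R(\dd)\cap S_\alpha$. Thus the entire statement takes place inside $S_\alpha$, which by the lemma identifying each $S_\alpha$ as a free rank-one $S^{[\dd]}$-module equals $\remd(f)\cdot S^{[\dd]}$ (all monomials of modular degree $\alpha$ share the same $\remd$, by Lemma~\ref{lem:round-down-properties}(3)); in particular multiplication by $\remd(f)$ is an injective $S^{[\dd]}$-module map $S^{[\dd]}\to S_\alpha$.

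The key step is the identification $\R(\dd)_\alpha=\remd(f)\cdot\mathfrak{J}$, where $\mathfrak{J}\subseteq S^{[\dd]}$ denotes the ideal generated by $g_0,\dots,g_{|\nn|}$ \emph{inside the subring} $S^{[\dd]}$; this makes sense because each $g_t=\sum_{i+j=t}x_i^{d_1}y_j^{d_2}$ already lies in $S^{[\dd]}$ and is therefore modular-homogeneous of degree $\zero$. For the inclusion $\subseteq$, given $r\in\R(\dd)_\alpha$ write $r=\sum_t a_tg_t$ with $a_t\in S$; projecting onto the modular-degree-$\alpha$ component and using $\moddeg(g_t)=\zero$ gives $r=\sum_t (a_t)_\alpha g_t$ with each $(a_t)_\alpha\in S_\alpha=\remd(f)S^{[\dd]}$, whence $r\in\remd(f)\cdot\mathfrak{J}$. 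For $\supseteq$, since $S_\alpha$ is an $S^{[\dd]}$-module and $g_t\in S^{[\dd]}$ we have $\remd(f)\cdot\mathfrak{J}=\sum_t\big(\remd(f)S^{[\dd]}\big)g_t=\sum_t S_\alpha g_t\subseteq S_\alpha$, and this is contained in $\R(\dd)$ by construction, hence in $\R(\dd)_\alpha$. Combining the identification with the injectivity of multiplication by $\remd(f)$ and the facts $\remd(f)\mid f$ and $f/\remd(f)\in S^{[\dd]}$ (Lemma~\ref{lem:round-down-properties}(1),(2)), one gets $f\in\R(\dd)\iff f/\remd(f)\in\mathfrak{J}$.

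It then remains to transport $\mathfrak{J}$ across the ring isomorphism $(-)^{1/\dd}\colon S^{[\dd]}\xrightarrow{\ \sim\ }S$. Since this isomorphism sends $x_i^{d_1}\mapsto x_i$ and $y_j^{d_2}\mapsto y_j$, it sends $g_t=\sum_{i+j=t}x_i^{d_1}y_j^{d_2}$ to $\sum_{i+j=t}x_iy_j$, which is exactly the $t$-th generator of $\R(\one)$; hence $(-)^{1/\dd}$ maps the ideal $\mathfrak{J}$ of $S^{[\dd]}$ isomorphically onto $\R(\one)$. Therefore $f/\remd(f)\in\mathfrak{J}$ if and only if $(f/\remd(f))^{1/\dd}\in\R(\one)$, and chaining this with the equivalence of the previous paragraph proves the proposition.

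I expect the only step demanding genuine care to be the identification $\R(\dd)_\alpha=\remd(f)\cdot\mathfrak{J}$: one must be careful that the $g_t$'s may legitimately be used to generate an ideal inside the \emph{subring} $S^{[\dd]}$ (not merely inside $S$), and that projecting a relation $\sum_t a_tg_t$ onto a modular-graded piece leaves the $g_t$ untouched while forcing the coefficients into $S_\alpha=\remd(f)S^{[\dd]}$. Once this is set up, the rest is formal bookkeeping with the modular grading and the explicit substitution $(-)^{1/\dd}$.
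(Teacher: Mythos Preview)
Your proof is correct and follows essentially the same approach as the paper's own proof: both exploit the homogeneity of $\R(\dd)$ under the modular grading and $\moddeg(g_t)=\zero$ to force the coefficients in a relation $f=\sum h_tg_t$ into $S_\alpha=\remd(f)\cdot S^{[\dd]}$, then divide by $\remd(f)$ and apply the isomorphism $(-)^{1/\dd}$. Your framing via the intermediate ideal $\mathfrak{J}\subset S^{[\dd]}$ and the explicit identification $\R(\dd)_\alpha=\remd(f)\cdot\mathfrak{J}$ is a bit more structured than the paper's presentation, but the content is the same.
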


\begin{proof}
By definition $f\in \R(\dd)$ if and only if there exists $h_i\in S$ such that:
\[
f=\sum_{i=0}^{|\nn|} h_ig_i.
\]
Now since $f$ is homogeneous with respect to the modular grading without loss of generality we may assume that the $h_i$ are also homogeneous with respect to the modular grading. Moreover, by Lemma~\ref{lem:mod-deg-homg} the modular degree of $g_{t}$ is equal to $\zero$, and so $\moddeg f=\moddeg h_i$. In particular, by part (3) of Lemma~\ref{lem:round-down-properties} we know that $\remd(f) = \remd(h_i)$. By part (1) Lemma~\ref{lem:round-down-properties} we know that $f$ is divisible by $\remd(f)$, and so combining these we have that $f\in \R(\dd)$ if and only if:
\[
\frac{f}{\remd(f)}=\sum_{i=0}^{|\nn|} \frac{h_ig_i}{\remd(f)}=\sum_{i=0}^{|\nn|} \frac{h_i}{\remd(h_i)}g_i.
\]
By part (2) of Lemma~\ref{lem:round-down-properties} the above relation is actually a relation in the subring $S^{[\dd]}$. Since under the isomorphism $-^{1/\dd}$ the image of $\R(\dd)$ is $\R(\one)$, we see that $f\in \R(\dd)$ if and only if $\left( f/\remd(f) \right)^{1/\dd} \in \R(\one)$. \end{proof}

\subsection{The Index Weighted Degree}

If we look at one of the generators of $\R(\dd)$ we see that the lower indices of each term all sum to the same thing. For example, $g_{1}=x_{0}^{d_1}y_1^{d_2}+x_{1}^{d_1}y_0^{d_2}$ and the lower indices of each term sum to one. We exploit this symmetry by introducing a non-standard $\Z$-grading on $S$, which we call the index weighted grading, which $\R$ is homogeneous with respect to. Using this grading we will prove Theorem~\ref{thm:tri-deg-vanishing}, and state a conjecture describing exactly when $S_{\aa,k}=\R(\one)_{\aa,k}$. 

\begin{defn}
Fix $\nn=(n_1,n_2)\in \Z^2_{\geq1}$ and $\dd=(d_1,d_2)\in \Z^2_{\geq1}$. The index weighted grading on $S$ is the non-standard $\Z$-grading given by letting $\indeg x_i=d_2i$ and $\indeg y_j=d_1j$ for $i=0,1,\ldots,n_1$ and $j=0,1,\ldots,n_2$.
\end{defn}

The important property of the index weighted grading is that $\R$ is homogeneous with respect to it.

\begin{lemma}\label{lem:homogeneous-wrt-idex}
Fix $\nn=(n_1,n_2)\in \Z^2_{\geq1}$ and $\dd=(d_1,d_2)\in \Z^2_{\geq1}$. The ideal $\R$ is homogeneous with respect to the index weighted grading.
\end{lemma}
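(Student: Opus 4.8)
\textbf{Proof plan for Lemma~\ref{lem:homogeneous-wrt-idex}.}

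The plan is to mimic the proof of Lemma~\ref{lem:mod-deg-homg}: since $\R=\langle g_0,g_1,\ldots,g_{|\nn|}\rangle$ is generated by the $g_t$, it suffices to check that each generator $g_t$ is homogeneous with respect to the index weighted grading, i.e. that every monomial appearing in $g_t$ has the same index weighted degree. First I would recall that the index weighted degree is additive on products of monomials — this is immediate from the definition $\indeg x_i = d_2 i$, $\indeg y_j = d_1 j$ together with the fact that $\indeg$ of a product is the sum of the $\indeg$'s of the factors — so that the index weighted grading is indeed a $\Z$-grading on $S$; this is the routine first half of the statement.

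For the main point, fix $t$ with $0\leq t\leq |\nn|$ and recall
\[
g_t=\sum_{\substack{i+j=t\\0\leq i \leq n_1\\ 0 \leq j \leq n_2}}x_{i}^{d_1}y_j^{d_2}.
\]
The key computation is that a typical term $x_i^{d_1}y_j^{d_2}$ with $i+j=t$ has index weighted degree
\[
\indeg\left(x_i^{d_1}y_j^{d_2}\right)=d_1\cdot\indeg(x_i)+d_2\cdot\indeg(y_j)=d_1 d_2 i + d_2 d_1 j = d_1 d_2 (i+j)=d_1 d_2\, t,
\]
which depends only on $t$ and not on the particular splitting $i+j=t$. Hence all terms of $g_t$ have the common index weighted degree $d_1 d_2\, t$, so $g_t$ is homogeneous. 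Since homogeneity of generators implies homogeneity of the ideal they generate, $\R$ is homogeneous with respect to the index weighted grading.

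This lemma is almost purely formal, so I do not expect a genuine obstacle; the only thing to be a little careful about is the cross-weighting in the definition (the exponent $d_1$ on $x_i$ is paired with the weight $d_2 i$, and symmetrically for $y_j$), which is exactly what makes the two contributions combine into $d_1 d_2(i+j)$. It is worth remarking, as motivation for the grading, that this is the same symmetry observed in the text — the lower indices of the terms of $g_t$ all sum to $t$ — now recorded as a genuine grading after weighting indices of the $x$'s by $d_2$ and indices of the $y$'s by $d_1$ so that the $d_1$-th and $d_2$-th powers contribute equally.
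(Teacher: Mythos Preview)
Your proof is correct and follows essentially the same approach as the paper: both verify directly that each generator $g_t$ is homogeneous by computing $\indeg(x_i^{d_1}y_j^{d_2})=d_1d_2i+d_1d_2j=d_1d_2t$ for every term with $i+j=t$. Your version adds a brief remark on additivity of $\indeg$ and some motivation, but the core argument is identical.
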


\begin{proof}
Recall that $\R=\langle g_0,g_{1}\ldots,g_{|\nn|}\rangle$ where for $0\leq t \leq |\nn|$:
\[ 
g_{t}=\sum_{\substack{i+j=t\\0\leq i \leq n_1\\ 0 \leq j \leq n_2}}x_{i}^{d_1}y_j^{d_2}.
\] 
Suppose $x_{i}^{d_1}y_j^{d_2}$ is a term appearing in $g_{t}$ so that $i+j=t$. Now we have that:
\[
\indeg\left(x_{i}^{d_1}y_j^{d_2}\right)=d_1d_2i+d_1d_2j=d_1d_2(i+j)=d_1d_2t,
\]
and so each term of $g_{t}$ has the same index weighted degree meaning $\R$ is homogeneous.\end{proof}

\begin{defn}
Given $\aa\in \Z^2$ and $k\in \Z$ we write $S_{\aa,k}$ (respectively $\overline{S}_{\aa,k}$ and $\R_{\aa,k}$) for the $\K$-vector space spanned by monomials in $S$ (respectively $\overline{S}$ and $I$) of bi-degree $\aa$ and index weighted degree $k$.
\end{defn}

The following conjecture describes exactly the $\aa\in\Z^2$ and $k\in \Z$ for which $\R(\one)_{\aa,k}$ is equal to $S_{\aa,k}$. Combined with Proposition~\ref{prop:containment} this provides a partial answer for the ideal membership question for $\R$. 

\begin{conj}\label{conj:tri-deg-vanishing}
Fix $\nn=(n_1,n_2)\in \Z^2_{\geq1}$ and let $\dd=\one$. Given $\aa=(a_1,a_2)\in \Z^2_{\geq0}$ and $k\in\Z_{\geq0}$ we have that $\dim \overline{S}_{\aa,k}=0$ if and only if $\aa$ and $k$ satisfy one of the following inequalities:
\begin{enumerate}
\item $a_1\geq1$ and  $a_2\geq n_1+1$,
\item $a_2\geq1$ and $a_1\geq n_{2}+1$,
\item $0\leq k\leq a_1a_2-1$, or
\item $k \geq a_1n_{1}+(n_{2}-a_1)a_2+1$.
\end{enumerate}
Moreover, if $k=a_1a_2$ or $k=a_1n_{1}+(n_{2}-a_1)a_2$ then $\dim \overline{S}_{\aa,k}=1$.
\end{conj}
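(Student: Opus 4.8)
The plan is to prove the two directions of the biconditional separately: the ``$\Leftarrow$'' direction (each of (1)--(4) forces $\overline S_{\aa,k}=0$) together with the two boundary equalities will reduce to Theorem~\ref{thm:tri-deg-vanishing} and an order‑reversing symmetry, while the ``$\Rightarrow$'' direction (if none of (1)--(4) holds then $\overline S_{\aa,k}\neq 0$) carries the real difficulty. Throughout, the ``moreover'' equalities are to be read in the range where (1) and (2) both fail: otherwise $\overline S_{\aa,k}=0$ for every $k$ and they would contradict the first part of Theorem~\ref{thm:tri-deg-vanishing}.

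\textbf{Vanishing and the boundary equalities.} Cases (1), (2), (3) are exactly Theorem~\ref{thm:tri-deg-vanishing}. For (4) and the boundary values, consider the order‑reversing automorphism $\tau$ of $S$ with $\tau(x_i)=x_{n_1-i}$ and $\tau(y_j)=y_{n_2-j}$. Since $\tau(g_t)=g_{|\nn|-t}$, it permutes the generators of $\R=\R(\one)$ and descends to a graded automorphism $\overline\tau$ of $\overline S$; as $\tau$ replaces the index contribution $i$ of each $x_i$ by $n_1-i$ and $j$ of each $y_j$ by $n_2-j$, it fixes every bi‑degree $\aa$ and sends index weighted degree $k$ to $n_1a_1+n_2a_2-k$, so $\overline\tau$ restricts to isomorphisms $\overline S_{\aa,k}\xrightarrow{\ \sim\ }\overline S_{\aa,\,n_1a_1+n_2a_2-k}$. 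Since $a_1n_1+(n_2-a_1)a_2=n_1a_1+n_2a_2-a_1a_2$, condition (4) says precisely $n_1a_1+n_2a_2-k\le a_1a_2-1$; applying $\overline\tau$ then lands us in case (3) when $n_1a_1+n_2a_2-k\ge 0$, and otherwise kills the group for degree reasons, so $\overline S_{\aa,k}=0$. The same isomorphism carries $k=a_1n_1+(n_2-a_1)a_2$ to $k'=a_1a_2$, so the asserted value $\dim\overline S_{\aa,k}=1$ at that boundary follows from the last sentence of Theorem~\ref{thm:tri-deg-vanishing}, which supplies the value at $k=a_1a_2$ directly.

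\textbf{Non‑vanishing.} Put $L(\aa)=a_1a_2$ and $U(\aa)=n_1a_1+n_2a_2-a_1a_2$, so the negation of (1)--(4) is: $(a_1=0\text{ or }a_2\le n_1)$, $(a_2=0\text{ or }a_1\le n_2)$, and $L(\aa)\le k\le U(\aa)$; we must show $\overline S_{\aa,k}\neq 0$. If $a_1=0$ or $a_2=0$ this is immediate, since $\R$ has no forms of bi‑degree $(0,\ast)$ or $(\ast,0)$, whence $\overline S_{\aa,k}=S_{\aa,k}\neq 0$ for each $k$ in the stated interval. Assume $a_1,a_2\ge 1$. The endpoints $k\in\{L(\aa),U(\aa)\}$ are the boundary equalities just proved, and by $\overline\tau$ we may assume $L(\aa)<k\le\tfrac12\bigl(L(\aa)+U(\aa)\bigr)$; only this ``lower‑interior'' range remains. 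Here I would use the surjections of Lemma~\ref{lem:induction-ideal}: for $0\le i\le n_1$, $0\le j\le n_2$ there is a natural surjection from $\overline S$ onto $S/\bigl(\R+\langle x_{n_1-i+1},\dots,x_{n_1},y_{n_2-j+1},\dots,y_{n_2}\rangle\bigr)$, which Lemma~\ref{lem:induction-ideal} identifies with the analogue of $\overline S$ for $\nn'=(n_1-i,n_2-j)$, and which respects both the bi‑grading and the index weighted grading, so $\dim\overline S_{\aa,k}\ge\dim\overline S'_{\aa,k}$. Since $L(\aa)$ is unchanged while $U_{\nn'}(\aa)=U_{\nn}(\aa)-ia_1-ja_2$, the triple $(\aa,k)$ stays admissible for $\nn'$ as long as $i\le n_1-a_2$, $j\le n_2-a_1$ (forced by the $\nn'$‑analogues of (1), (2)) and $ia_1+ja_2\le U_{\nn}(\aa)-k$; since $U_{\nn}(\aa)-k\ge\tfrac12\bigl(U_{\nn}(\aa)-L(\aa)\bigr)$ in the lower‑interior range, decreasing whichever of $n_1,n_2$ carries enough slack produces such a proper $\nn'$, and iterating lands either on a boundary point (handled above) or on the easy case $a_1a_2=0$, completing the induction on $|\nn|$ in these cases.

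\textbf{The main obstacle.} What this does not reach is a family of ``corner'' triples $(\aa,k)$ admitting no proper $\nn'\le\nn$ at all: $i$ is forced to $0$ (when $a_2=n_1$) and lowering $n_2$ overshoots (when $a_2>U_{\nn}(\aa)-k$); one finds this happens, for instance, for $\aa=(n_2-1,n_1)$ with $k$ near the middle of $[L(\aa),U(\aa)]$, a concrete stuck instance being $\nn=(3,3)$, $\aa=(2,3)$, $k=7$. For such triples the non‑vanishing must be produced directly: one needs an explicit monomial $m$ of bi‑degree $\aa$ and index weighted degree $k$ together with a $\K$‑linear functional on $S_{\aa,k}$ vanishing on $\R_{\aa,k}$ but not on $m$. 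Building such functionals uniformly — equivalently, pinning down the initial ideal of $\R$ under a suitable term order, or the full index‑weighted Hilbert function of $\overline S$ — is the crux, and is exactly the point at which Section~\ref{sec:ideal-membership} stops short of a complete description; I expect this is why the statement is left as a conjecture. The most promising route appears to be to sharpen the spectral‑sequence arguments behind Theorem~\ref{thm:tri-deg-vanishing}, exploiting the several non‑standard gradings with respect to which $\R$ is homogeneous, so that they detect the strict inclusion $\R_{\aa,k}\subsetneq S_{\aa,k}$ and not merely the equality case.
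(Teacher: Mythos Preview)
The statement you are addressing is labeled a \emph{conjecture} in the paper, and the paper explicitly does not prove it: the text immediately following it says ``While we are unable to prove the full conjecture, we do prove a large portion of it,'' and that portion is exactly Theorem~\ref{thm:tri-deg-vanishing} (cases (1), (2), (3) and the boundary $k=a_1a_2$). So there is no ``paper's own proof'' to compare against; the right question is how your attempt relates to what the paper does establish.

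Your order-reversing involution $\tau$ with $\tau(x_i)=x_{n_1-i}$, $\tau(y_j)=y_{n_2-j}$ is correct and is a genuine contribution beyond the paper. One checks $\tau(g_t)=g_{|\nn|-t}$, so $\tau$ preserves $\R(\one)$, fixes each bi-degree, and sends index weighted degree $k$ to $n_1a_1+n_2a_2-k$; the induced isomorphism $\overline S_{\aa,k}\cong\overline S_{\aa,\,n_1a_1+n_2a_2-k}$ then derives case (4) from case (3) and the second boundary value $k=a_1n_1+(n_2-a_1)a_2$ from the first. The paper neither states nor uses this symmetry, so on the vanishing side and on the ``moreover'' clause you have strictly extended Theorem~\ref{thm:tri-deg-vanishing}.

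On the non-vanishing side, your self-assessment is accurate. The reduction via the surjections of Lemma~\ref{lem:induction-ideal} is sound where it applies (surjectivity on each graded piece gives $\dim\overline S_{\aa,k}\ge\dim\overline S'_{\aa,k}$, and both gradings are preserved), but your ``corner'' example $\nn=(3,3)$, $\aa=(2,3)$, $k=7$ is a real obstruction: one cannot lower $n_1$ because $a_2=n_1$, and lowering $n_2$ by $1$ would require $a_2\le U_{\nn}(\aa)-k=2$, which fails. So the induction genuinely stalls, and producing an explicit witness $m\notin\R(\one)_{\aa,k}$ (or a separating functional) for these triples is exactly the missing ingredient. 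This is consistent with the paper's own remark that Section~\ref{sec:ideal-membership} does not fully describe $(\R:_S f)$, and it is the reason the statement remains a conjecture. In short: your proposal is not a complete proof, but it is an honest and accurate account of the boundary between what is proved and what is open, and it pushes that boundary slightly further than the paper does.
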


While we are unable to prove the full conjecture, we do prove a large portion of it. In particular, the remaining portion of this section is dedicated to proving Theorem~\ref{thm:tri-deg-vanishing}. This shows that conditions (1), (2), and (3) imply $\dim \overline{S}_{\aa,k}=0$, as well as proves that $\overline{S}_{\aa,a_{1}a_{2}}$ is one dimensional. 

First, using a hypercohomology spectral sequence argument we prove part (1) and (2) of Theorem~\ref{thm:tri-deg-vanishing}. This establishes the sufficiency of conditions (1) and (2) in Conjecture~\ref{conj:tri-deg-vanishing}.

\begin{proof}[Proof of Part (1) and (2) of Theorem~\ref{thm:tri-deg-vanishing}]
Let $\dd=\one$ and consider the Koszul complex of $\O_{\P^{\nn}}$-modules defined on $g_{0},g_{1},\ldots,g_{|\nn|}$:
\begin{equation*}
\fF_{\doot}\coloneqq \left[
\begin{tikzcd}
0 & \lar \O_{\P^{\nn}} & \lar \O_{\P^{\nn}}^{\oplus(|\nn|+1)}(-\one) & \lar \cdots & & \cdots & \lar  \O_{\P^{\nn}}(-(|\nn|+1)\cdot\one) & \lar 0
\end{tikzcd}\right].
\end{equation*}
More precisely $\fF_{\doot}$ is the Koszul complex of $\O_{\P^{\nn}}$-modules where $\fF_{i}=\O_{\P^{\nn}}(-i\cdot\one)^{\oplus\binom{|\nn|+1}{i}}$. Notice that this complex is quasi-isomorphic to zero. Given $\aa=(a_1,a_2)\in\Z^2$ we write $\fF(\aa)_{\doot}$ for the complex $\fF_{\doot}\otimes \O_{\P^{\nn}}(\aa)$. Consider the hypercohomology spectral sequence associated to the complex $\fF(\aa)_{\doot}$, and the global sections functor $\Gamma\left(-,\O_{\P^{\nn}}\right)$, which is defined by:
\[
E^{1}_{p,q}=R^{q}\Gamma\left(\fF(\aa)_{p}\right)=H^{q}\left(\P^{\nn},\fF(\aa)_{p}\right).
\]
This spectral sequence abuts to $\mathbb{H}^{p-q}\left(\fF(\aa)_{\doot}\right)$, which since $\fF_{\doot}$ is quasi-isomorphic to zero is zero. The $E^1$ page of this spectral sequence looks like:

\begin{tikzpicture}
\matrix (m) [matrix of math nodes,
             nodes in empty cells,
             nodes={minimum width=5ex, minimum height=6ex,
                    text depth=1ex,
                    inner sep=0pt, outer sep=0pt,
                    anchor=base},
             column sep=4ex, row sep=4ex]%
{
{\scriptstyle |\nn|} & \scriptstyle H^{|\nn|}\left(\P^{\nn},\fF(\aa)_{0}\right)  & \scriptstyle H^{|\nn|}\left(\P^{\nn},\fF(\aa)_{1}\right)  &  & \cdots & & \scriptstyle H^{|\nn|}\left(\P^{\nn},\fF(\aa)_{\nn}\right)   & \scriptstyle H^{|\nn|}\left(\P^{\nn},\fF(\aa)_{|\nn|+1}\right) & \\
\scriptstyle |\nn|-1 & \scriptstyle H^{|\nn|-1}\left(\P^{\nn},\fF(\aa)_{0}\right)  & \scriptstyle H^{|\nn|-1}\left(\P^{\nn},\fF(\aa)_{1}\right)   & & \cdots & & \scriptstyle H^{|\nn|-1}\left(\P^{\nn},\fF(\aa)_{|\nn|}\right)   & \scriptstyle H^{|\nn|-1}\left(\P^{\nn},\fF(\aa)_{|\nn|+1}\right)  &\\
    [7ex,between origins]
\vdots  & \vdots &  \vdots   &    & &\ddots & \vdots       & \cdots &\\
    [7ex,between origins]
\scriptstyle   1   &    \scriptstyle H^1\left(\P^{\nn},\fF(\aa)_{0}\right)     &  \scriptstyle H^1\left(\P^{\nn},\fF(\aa)_{1}\right)   & & \cdots & &  \scriptstyle H^1\left(\P^{\nn},\fF(\aa)_{|\nn|}\right) & \scriptstyle H^{1}\left(\P^{\nn},\fF(\aa)_{|\nn|+1}\right) & \\
    [5ex,between origins]
\scriptstyle   0   &  \scriptstyle H^0\left(\P^{\nn},\fF(\aa)_{0}\right)       &   \scriptstyle H^0\left(\P^{\nn},\fF(\aa)_{1}\right)             &     &        &  &  \scriptstyle H^0\left(\P^{\nn},\fF(\aa)_{|\nn|}\right)           &  \scriptstyle H^0\left(\P^{\nn},\fF(\aa)_{|\nn|+1}\right) & \\
    [3ex,between origins]
        &  \scriptstyle 0     &  \scriptstyle 1          &           & \cdots & &  \scriptstyle {|\nn|}           & \scriptstyle {|\nn|+1} &\\
};
\draw[shorten <= .1cm, shorten >= .1cm,-stealth] (m-1-3) -- (m-1-3 -| m-1-2.east);
\draw[shorten <= .1cm, shorten >= .1cm,-stealth] (m-2-3) -- (m-2-3 -| m-2-2.east);
\draw[shorten <= .1cm, shorten >= .1cm,-stealth] (m-4-3) -- (m-4-3 -| m-4-2.east);
\draw[shorten <= .1cm, shorten >= .1cm,-stealth] (m-5-3) -- (m-5-3 -| m-5-2.east);

\draw[shorten <= .1cm, shorten >= .1cm,-stealth] (m-1-7) -- (m-1-7 -| m-1-6.east);
\draw[shorten <= .1cm, shorten >= .1cm,-stealth] (m-2-7) -- (m-2-7 -| m-2-6.east);
\draw[shorten <= .1cm, shorten >= .1cm,-stealth] (m-4-7) -- (m-4-7 -| m-4-6.east);
\draw[shorten <= .1cm, shorten >= .1cm,-stealth] (m-5-7) -- (m-5-7 -| m-5-6.east);

\draw[shorten <= .1cm, shorten >= .1cm,-stealth] (m-1-8) -- (m-1-8 -| m-1-7.east);
\draw[shorten <= .1cm, shorten >= .1cm,-stealth] (m-2-8) -- (m-2-8 -| m-2-7.east);
\draw[shorten <= .1cm, shorten >= .1cm,-stealth] (m-4-8) -- (m-4-8 -| m-4-7.east);
\draw[shorten <= .1cm, shorten >= .1cm,-stealth] (m-5-8) -- (m-5-8 -| m-5-7.east);

\draw[thick] (m-1-1.north east) -- (m-6-1.east) ;
\draw[thick] (m-6-1.north) -- (m-6-9.north);
\end{tikzpicture}
By the K\"{u}nneth formula \cite[\href{https://stacks.math.columbia.edu/tag/0BEC}{Tag 0BEC}]{stacks-project} the only possible $q$ for which $H^q\left(\P^{\nn},\O_{\P^{\nn}}(a_1,a_2)\right)$ is non-zero is $q=0,n_1,n_2,$ and $|\nn|$. More specifically:
\[
H^q\left(\P^{\nn},\O_{\P^{\nn}}(a_1,a_2)\right)\cong
\begin{cases}
\K^{\binom{n_1+a_1}{a_1}}\otimes\K^{\binom{n_2+a_2}{a_2}} &\mbox{if } q=0 \mbox{  and  }  a_1\geq0,a_2\geq0\\
\K^{\binom{-a_1-1}{-n_1-a_1-1}}\otimes \K^{\binom{n_2+a_2}{a_2}} &\mbox{if } q=n_1 \mbox{  and  }  a_1\leq-n_1-1,a_2\geq0\\
\K^{\binom{n_1+a_1}{a_1}}\otimes\K^{\binom{-a_2-1}{-n_2-a_2-1}} &\mbox{if } q=n_2 \mbox{  and  }  a_1\geq0,a_2\leq -n_2-1 \\
\K^{\binom{-a_1-1}{-n_1-a_1-1}}\otimes\K^{\binom{-a_2-1}{-n_2-a_2-1}} &\mbox{if } q=n_1+n_1 \mbox{  and  }  a_1\leq -n_1-1,a_2\leq -n_2-1 \\
\end{cases}.
\]
Using this we see that the only non-zero entries on the $E^1$ page occur in rows $q=0,n_1,n_2,$ and $|\nn|$. In fact, the only spots $(p,q)$ on the first page of this spectral sequence, which are non-zero are:
\begin{enumerate}
\item $(p,0)$ for $p$ in the range $p\leq \min\{a_1,a-2,|\nn|+1\}$,
\item $(p,n_1)$ for $p$ in the range $a_1+n_1+1\leq p \leq \min\{a_2,|\nn|+1\}$,
\item $(p,n_2)$ for $p$ in the range $a_2+n_2+1\leq p \leq \min\{a_1,|\nn|+1\}$, and
\item $(p,|\nn|)$ for $p$ in the range $\max\{a_1+n_1+1,a_2+n_2+1\}\leq p\leq |\nn|+1$.
\end{enumerate}
Now consider the $E^2$ page of the spectral sequence. Since $\fF_{\doot}(\aa)$ is a Koszul complex twisted by $\O_{\P^{\nn}}(\aa)$ on this page the 0th row is nothing but the degree $(a_1,a_2)$ strand on the Koszul complex. Moreover, since the cokernel of the Koszul complex is $\overline{S}$ we have that $E^2_{0,0}\cong \overline{S}_{\aa}$. 

On the $j$th page of this spectral sequence the map to $E^j_{0,0}$ has source $E^j_{j,j-1}$. By considering the $E^1$ page this means the only maps to the $E^j_{0,0}$ that may be non-trivial occur when $j=1,n_1+1,n_2+1,$ and $|\nn|+1$. We have already described the map on the $E^1$ page, and so consider the remaining cases.
\begin{itemize}
\item \underline{Page $n_1+1$:} On the $E^{n_1+1}$ page the map to $E^{n_1+1}_{0,0}$ has source $E^{n_1+1}_{n_1+1,n_1}$. Thus, for this map to be trivial it suffices for $E^{1}_{n_1+1,n_1}=0$. By our description of the $E^1$ page above this is true if and only if $n_1+1<a_1+n_1+1$. So if $a_1\geq1$ then $E^{1}_{n_1+1,n_1}=0$, and so the map to $E^{n_1+1}_{0,0}$ is zero.
\item \underline{Page $n_2+1$:} On the $E^{n_2+1}$ page the map to $E^{n_2+1}_{0,0}$ has source $E^{n_2+1}_{n_2+1,n_2}$. Thus, for this map to be trivial it suffices for $E^{1}_{n_2+1,n_2}=0$. By our description of the $E^1$ page above this is true if and only if $n_2+1<a_2+n_2+1$. So if $a_2\geq1$ then $E^{1}_{n_2+1,n_2}=0$, and so the map to $E^{n_2+1}_{0,0}$ is zero.
\item \underline{Page $|\nn|+1$:} On the $E^{|\nn|+1}$ page the map to $E^{|\nn|+1}_{0,0}$ has source $E^{|\nn|+1}_{|\nn|+1,|\nn|}$. Thus, for this map to be trivial it suffices for $E^{1}_{|\nn|+1,|\nn|}=0$. By our description of the $E^1$ page above this is true if and only if $|\nn|+1<\min\{a_1+n_1+1,a_2+n_2+1\}$. So if $a_1\geq n_2+1$ or $a_2\geq n_1+1$ then $E^{1}_{|\nn|+1,|\nn|}=0$, and so the map to $E^{|\nn|+1}_{0,0}$ is zero.
\end{itemize}
Thus, if $a_1,a_2\geq1$ and either $a_1\geq n_2+1$ or $a_2\geq n_1+1$ there are no non-zero maps to $E^j_{0,0}$ for $j\geq2$. Since this spectral sequence abuts to zero $E^{2}_{0,0}\cong \overline{S}_{\aa}\cong 0$.
\end{proof}

We now shift to showing that condition (3) of Conjecture~\ref{conj:tri-deg-vanishing} implies the stated vanishing. Before proving this in full we first consider the special case when $a_1=1$. This will be useful as a base case for our inductive proof of Proposition~\ref{prop:conjecture-1}.

\begin{lemma}\label{lem:tri-vanishing-special-case}
Fix $\nn=(n_1,n_2)\in \Z^2_{\geq1}$ and let $\dd=\one$. Given $\aa=(1,a_2)\in \Z^{2}_{\geq1}$ if $k\leq a_2-1$ then $\dim \overline{S}_{\aa,k}=0$. 
\end{lemma}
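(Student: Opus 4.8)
The statement concerns $\overline{S}_{\aa,k}$ for $\aa = (1, a_2)$ and $\dd = \one$, so I want to show that every monomial of bidegree $(1,a_2)$ and index weighted degree $k \leq a_2 - 1$ lies in $\R(\one)$. The key observation is that a monomial $m$ of bidegree $(1, a_2)$ has the form $m = x_i \cdot \yy^{\ww}$ for a single variable $x_i$ (with $0 \le i \le n_1$) and a monomial $\yy^{\ww} = y_0^{w_0}\cdots y_{n_2}^{w_2}$ with $|\ww| = a_2$. Its index weighted degree is $\indeg(m) = d_2 i + d_1 \sum_j j w_j = i + \sum_j j w_j$ since $\dd = \one$. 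So the hypothesis $k \le a_2 - 1$ with $k = i + \sum_j jw_j$ forces $\sum_j j w_j \le a_2 - 1$, i.e.\ the ``weighted support'' of the $y$-part is strictly less than $a_2$; in particular $\yy^{\ww}$ cannot be a pure power $y_j^{a_2}$ for $j \ge 1$, and more is true. I would try to exploit this to rewrite $m$ modulo $\R$.

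\textbf{First step: reduce using the relations $g_t$.} Recall $g_t = \sum_{i+j=t} x_i y_j$ (for $\dd = \one$). The plan is to use these relations to ``move weight'' from the $y$-variables to the $x$-variable and push the $x$-index up past $n_1$, at which point the monomial vanishes. Concretely, given $m = x_i \yy^{\ww}$, if $w_j \geq 1$ for some $j \geq 1$, then $x_i y_j$ is a summand of $g_{i+j}$, so modulo $\R$ we may replace $x_i y_j$ by $-\sum_{(i',j') \neq (i,j),\, i'+j' = i+j} x_{i'} y_{j'}$. Each replacement term $x_{i'} y_{j'}$ has $i' + j' = i + j$ fixed but trades off: either $i' > i$ (moving weight onto the $x$-variable, good — we're heading toward $i' > n_1$) or $i' < i$ and $j' > j$. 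I would set up an induction on a suitable monovariant — e.g.\ on $a_2 - \sum_j j w_j$, or on the $x$-index $i$ — to show the process terminates with everything in $\R$ or identically zero. The base case $a_1 = 1$ handled here is itself meant to seed the induction in Proposition~\ref{prop:conjecture-1}, so the argument should be reasonably self-contained, perhaps even a direct combinatorial identity.

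\textbf{Alternative, cleaner approach.} Rather than an explicit rewriting, I would consider the $\K$-linear map perspective: $\overline{S}_{(1,a_2)}$ is a quotient of $S_{(1,a_2)}$, which as a vector space decomposes by the $x$-variable appearing, $S_{(1,a_2)} = \bigoplus_{i=0}^{n_1} x_i \cdot S''_{a_2}$ where $S'' = \K[y_0,\ldots,y_{n_2}]$. The relations $g_t$ restricted to bidegree $(1, a_2)$ are spanned by $g_t \cdot \yy^{\ww'}$ with $|\ww'| = a_2 - 1$ and $0 \le t \le n_1 + (a_2-1)$-ish; each such element is $\sum_{i+j = t} x_i y_j \yy^{\ww'}$. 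One can then try to directly show these span all of $S_{(1,a_2)}$ in index weighted degrees $\le a_2 - 1$ — essentially a rank computation. Filtering $S_{(1,a_2)}$ by index weighted degree and analyzing the associated graded pieces, I expect that in each low index weighted degree $k \le a_2-1$ the relations $g_t$ (for appropriate $t$) hit a full-rank subspace. This is where a spectral-sequence or Koszul-strand argument analogous to the proof of parts (1) and (2) of Theorem~\ref{thm:tri-deg-vanishing} might be cleanest: twist the Koszul complex on $g_0, \ldots, g_{|\nn|}$ appropriately and track the index weighted grading, which is preserved.

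\textbf{Main obstacle.} The hard part will be controlling the termination/exactness of the rewriting in a way that is uniform across all $n_1, n_2$ and all $k \le a_2 - 1$ — in particular making sure that the ``cascade'' of substitutions using the $g_t$ never cycles back and that the index weighted degree genuinely acts as a barrier preventing a monomial of low index weighted degree from being a nontrivial (nonzero) class. I suspect the slick route is to observe that a monomial $x_i \yy^\ww$ of index weighted degree $k = i + \sum_j j w_j \le a_2 - 1 < a_2 = |\ww|$ must have $w_0 \ge 1$ (since otherwise $\sum_j j w_j \ge |\ww| = a_2$), peel off one factor of $y_0$, and induct on $a_2$: write $x_i y_0 \cdot (\yy^\ww / y_0)$, use $g_i = \sum_{i'+j' = i} x_{i'} y_{j'} \ni x_i y_0$ to rewrite $x_i y_0$ modulo $\R$, and check the resulting terms have strictly smaller $a_2$ or land in $\R$ directly. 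Verifying the index weighted degree bound is inherited at each inductive step, and that the $a_2 = 0$ base case is trivial, should then close the argument.
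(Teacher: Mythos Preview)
Your final approach is on the right track but has the wrong inductive variable. When you rewrite $x_i y_0$ using $g_i$, you get $x_i y_0 \equiv -\sum_{j'=1}^{\min(i,n_2)} x_{i-j'} y_{j'}$ modulo $\R$, and multiplying back by $\yy^{\ww}/y_0$ produces terms $x_{i-j'}\cdot(y_{j'}\,\yy^{\ww}/y_0)$ which still have bidegree $(1,a_2)$ --- so $a_2$ does not drop and induction on $a_2$ stalls. What \emph{does} strictly decrease is the $x$-index (from $i$ to $i-j'$ with $j'\geq 1$), and the index weighted degree $k$ is preserved (because $g_i$ is homogeneous for that grading), so the hypothesis $k\leq a_2-1$ is inherited. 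Inducting on $i$, with base case $i=0$ where $x_0 y_0=g_0$, gives a valid proof. You even listed ``the $x$-index $i$'' as a candidate monovariant earlier; that is the one that works.

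The paper's proof follows the same spirit but sharpens your observation $w_0\geq 1$ to $w_0\geq \ell+1$ (where $\ell$ is the $x$-index): from $\sum_j j w_j = k-\ell \leq a_2-1-\ell$ one gets $\sum_{j\geq 1} w_j \leq a_2-1-\ell$, hence $w_0\geq \ell+1$. This lets one factor any such monomial as $x_\ell y_0^{\ell+1}\cdot m''$ and reduce the whole lemma to the single claim $x_\ell y_0^{\ell+1}\in\R(\one)$, which is then proved by induction on $\ell$ via exactly the rewriting $x_\ell y_0 = g_\ell - \sum_{i=1}^{\ell} x_{\ell-i} y_i$ you proposed. So your corrected argument and the paper's are essentially the same induction, but the paper first strips away the irrelevant part of the monomial. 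Your ``push the $x$-index past $n_1$'' idea from the first paragraph, by contrast, does not make sense: the relations $g_t$ never produce $x_{i'}$ with $i'>n_1$, so there is nothing to push toward.
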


\begin{proof}
It is enough to show that if $m\in S_{\aa,k}$ is a monomial then $m\in \R(\nn,\one)$. Now any monomial $m\in S_{\aa,k}$ is of the form $m=x_{\ell}m'$ where $m'$ is a monomial  of bi-degree $(0,a_2)$ and index weighted degree $k-\ell$ supported on $y_0,y_1,\ldots,y_{n_2}$. Since the index weighed degree of $m$ is less than or equal to $a_2-1$ the index weighted degree of $m'$ is less than $a_2-(\ell+1)$. Thus, we may write $m'$ as $m'=y_{0}^{\ell+1}m''$ where $m''$ is a monomial supported on $y_0,y_1,\ldots,y_{n_2}$. So it is enough to show that $x_{\ell}y^{\ell+1}_{0}\in \R(\nn,\one)$.

Towards this we show that $x_{\ell}y^{\ell+1}_{0}\in \R(\nn,\one)$ for $\ell=0,1,\ldots,n_1$ by induction on $\ell$. The base case when $\ell=0$ is clear since $x_{\ell}y_{0}^{\ell+1}=g_{0}$. Now suppose the $x_{\ell'}y^{\ell'+1}_{0}\in \R(\one)$ for all $0\leq \ell'<\ell$. We may write $x_{\ell}y_{0}^{\ell+1}$ as 
\[
x_{\ell}y_{0}^{\ell+1}=x_{\ell}y_{0}y_{0}^{\ell}=\left(g_{\ell}-\sum_{i=1}^{\ell}x_{\ell-i}y_{i}\right)y_{0}^{\ell},
\]
and so it is enough to show that $\left(\sum_{i=1}^{\ell}x_{\ell-i}y_{i}\right)y_{0}^{\ell}$ is in $\R(\one)$. However, each term in this sum is of the form $x_{\ell-i}y_{0}^{\ell-i}y_iy_{0}^{i}$, which by the inductive hypothesis is contained in $\R(\nn,\one)$. 
\end{proof}

\begin{prop}\label{prop:conjecture-1}
Fix $\nn \in \Z^2_{\geq1}$ and let $\dd=\one$. Given $\aa=(a_1,a_2)\in \Z_{\geq1}^2$ if $0\leq k\leq a_1a_2-1$ then $\dim\overline{S}_{\aa,k} =0$. 
\end{prop}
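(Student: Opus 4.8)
The plan is to prove Proposition~\ref{prop:conjecture-1} by induction on $a_1$, using Lemma~\ref{lem:tri-vanishing-special-case} as the base case $a_1=1$, and mimicking the structure of that lemma's proof: reduce to showing that a small family of ``extremal'' monomials lies in $\R(\nn,\one)$, then peel off factors using the relations $g_t$ one at a time. Concretely, it suffices to show that every monomial $m\in S_{\aa,k}$ with $0\leq k\leq a_1a_2-1$ lies in $\R=\R(\nn,\one)$, equivalently that $\overline S_{\aa,k}=0$.

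First I would set up the reduction. A monomial $m$ of bi-degree $\aa=(a_1,a_2)$ factors as $m=x_\ell\, m'$ where $m'$ has bi-degree $(a_1-1,a_2)$; writing $\inn\deg m = \ell + \inn\deg m'$, the constraint $\inn\deg m\le a_1a_2-1$ forces a bound on $\inn\deg m'$. The key point is that either $\inn\deg m'\le (a_1-1)a_2-1$, in which case $m'\in\R$ by the inductive hypothesis (applied with $a_1-1$ in place of $a_1$, noting $a_1-1\ge 1$ when $a_1\ge 2$) and hence $m\in\R$; or else $\inn\deg m$ is close to the top of the allowed range, and I must argue directly. So the real content is handling monomials whose index weighted degree is in the window $(a_1-1)a_2 \le \inn\deg m \le a_1a_2-1$, which has width $a_2$. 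For such an $m$, by a pigeonhole/counting argument on the exponents (the $x$-exponents sum to $a_1$ and contribute $\sum i\cdot(\text{exponent of }x_i)$ to the index degree, similarly for $y$), one should be able to show $m$ is divisible by a monomial of the shape $x_i y_0^{\,?}$ or, after using Lemma~\ref{lem:tri-vanishing-special-case}, reduce the first factor. A cleaner route: factor $m = m_x m_y$ into its $x$-part and $y$-part; if the $y$-part alone already has low enough index degree we invoke Lemma~\ref{lem:tri-vanishing-special-case}-type reasoning slotwise, otherwise use the $g_t$ relation at the largest index $t$ appearing to rewrite $x_t^{?}y_s^{?}$ (a term of $m$) as $g_t$ times something minus lower-index terms, each of which has strictly smaller index weighted degree and so falls to the inductive hypothesis.

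The step I expect to be the main obstacle is exactly this window argument: making precise, for a monomial $m$ with $(a_1-1)a_2\le \inn\deg m\le a_1a_2-1$ and bi-degree $(a_1,a_2)$, why one can always extract a factor to which either Lemma~\ref{lem:tri-vanishing-special-case} or a single application of some $g_t$ applies while keeping the ``remainder'' terms at strictly lower index weighted degree (so the induction on $\inn\deg$, nested inside the induction on $a_1$, terminates). This requires a careful bookkeeping of how multiplying by $g_t$ and subtracting off its non-leading terms changes the index weighted degree — each non-leading term $x_{i}^{d_1}y_j^{d_2}$ with $i+j=t$ has the \emph{same} total index degree, so multiplying $m/(\text{some term of }g_t)$ by it keeps $\inn\deg$ fixed, which is a problem; the resolution must instead come from an induction on a different quantity (e.g. on $a_1$ with a secondary induction on $a_2$, or on the lexicographically largest exponent vector), analogous to how Lemma~\ref{lem:tri-vanishing-special-case} inducts on $\ell$. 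I would model the write-up closely on Lemma~\ref{lem:tri-vanishing-special-case}: identify the analogue of the ``extremal monomials'' $x_\ell y_0^{\ell+1}$, prove these few lie in $\R$ by an explicit telescoping using the $g_t$, and then show every monomial in the claimed range is divisible by one of them times something harmless.
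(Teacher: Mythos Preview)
Your proposal has a genuine gap precisely where you flag it: the ``window'' $(a_1-1)a_2 \le \indeg m \le a_1a_2-1$. The difficulty is real and your suggested fixes do not close it. Rewriting via $g_t$ cannot lower the index weighted degree (as you observe, all terms of $g_t$ share the same index degree), so there is no well-founded descent there; and the ``extremal monomial'' idea from Lemma~\ref{lem:tri-vanishing-special-case} does not scale, because for $a_1\ge 2$ there is no small finite list of monomials whose $\R$-membership forces the whole window to vanish. Concretely, take $m=x_0^{a_1}\cdot(\text{$y$-part})$: every $x$-factor has $\ell=0$, so peeling off an $x_\ell$ never moves $\indeg m'$ below $(a_1-1)a_2$, and you are stuck inside the window with no induction available.

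The missing idea is to let the ambient ring vary. The paper inducts on $n_1+a_1$ (not on $a_1$ alone) and uses the short exact sequence
\[
0 \longrightarrow \overline{S} \xrightarrow{\ \cdot x_{n_1}\ } \overline{S} \longrightarrow \overline{S}/\langle x_{n_1}\rangle \longrightarrow 0,
\]
which gives $\dim \overline{S}_{\aa,k}=\dim \overline{S}_{(a_1-1,a_2),\,k-n_1}+\dim\bigl(\overline{S}/\langle x_{n_1}\rangle\bigr)_{\aa,k}$. The second summand is handled by Lemma~\ref{lem:induction-ideal}, which identifies $\overline{S}/\langle x_{n_1}\rangle$ with the analogous quotient for $\nn'=(n_1-1,n_2)$, so induction applies. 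For the first summand one needs $k-n_1\le (a_1-1)a_2-1$; this is exactly where the paper uses the preliminary reduction $a_2\le n_1$, legitimate because if $a_2>n_1$ then parts~(1)--(2) of Theorem~\ref{thm:tri-deg-vanishing} already give $\overline{S}_{\aa}=0$. In other words, the trick is that multiplication by $x_{n_1}$ shifts the index degree by the \emph{largest} available amount $n_1\ge a_2$, which is precisely enough to escape your window; inducting only on $a_1$ forfeits control over how large that shift is.
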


\begin{proof}
Fix $\nn=(n_1,n_2)\in \Z^2_{\geq1}$, $\aa=(a_1,a_2)\in\Z^2$, and $k\in \Z$ such that $k\leq a_1a_2-1$. Without without loss of generality we assume that $n_2\leq n_1$, and that $a_2\leq n_1$ since if $a_{2}>n_1$ then $\dim \overline{S}_{\aa,k}=0$ by parts (1) and (2) of Theorem~\ref{thm:tri-deg-vanishing}. We now proceed by induction on $n_1+a_1$. Note that the base case when $a_1=n_1=1$ follows immediately from Lemma~\ref{lem:tri-vanishing-special-case}. Our inductive hypothesis can now be stated as follows: Let $S'=\K[x_{0},x_{1},\ldots,x_{n'_1},y_{0},y_{1},\ldots,y_{n_2}]$, which we consider with the natural bi-grading, and write $\overline{S}'$ for $S'/\R(n'_1,n_2,\one)$. If $(a'_1,a_2)\in \Z^2_{\geq1}$ and $n'_{1}+a'_{1}<n_1+a_1$ then $\dim \overline{S}'_{(a'_1,a_2),k}=0$ for all $0\leq k\leq a'_{1}a_{2}-1$. 

Now since $\R(\nn,\one)$ is homogeneous with respects to the bi-grading and index weighted grading, Lemma~\ref{lem:homogeneous-wrt-idex}, after shifting accordingly we may consider the short exact sequence 
\begin{center}
\begin{tikzcd}[column sep = 3.5em]
0 \rar & \overline{S}\rar{\cdot x_{n_1}} & \overline{S}\rar & \overline{S}/\langle x_{n_1}\rangle \rar & 0
\end{tikzcd}
\end{center}
as a short exact sequence of graded modules with respect to either the bi-grading or index weighted grading. This gives the following:
\[
\dim \overline{S}_{\aa,k}=\dim \overline{S}_{(a_1-1,a_2),k-n_1}  + \dim \overline{S}/\langle x_{n_1}\rangle_{\aa,k}.
\]
So for the inductive step it is enough for both $\dim \overline{S}_{(a_1-1,a_2),k-n_1}$ and $\dim \overline{S}/\langle x_{n_1}\rangle_{\aa,k}$ to equal zero. 

First we show that $\dim \overline{S}_{(a_1-1,a_2),k-n_1}$ is equal to zero. Notice that by applying the inductive hypothesis in the case when $n'_1=n_1$ and $a'_1=a_1-1$, it is enough for $k-n_1\leq (a_1-1)a_2-1=a_1a_2-a_2-1$. This inequality is true since by our initial assumptions $k\leq a_1a_2-1$ and $a_2\leq n_1$. 

Now we show that $\dim \overline{S}/\langle x_{n_1}\rangle_{\aa,k}$ is equal to zero. By Lemma~\ref{lem:induction-ideal} there is an isomorphism between $\overline{S}/\langle x_{n_1}\rangle$ and $\overline{S}'$ when $n'_1=n_1-1$. In particular, the dimension of $\overline{S}/\langle x_{n_1}\rangle_{\aa,k}$ is equal to the dimension of $\overline{S}'_{\aa,k}$. Applying the inductive hypothesis when $n'_1=n_1-1$ and $a'_1=a_1$, we conclude that $\dim \overline{S}/\langle x_{n_1}\rangle_{\aa,k}=\dim \overline{S}_{\aa,k}=0$.
\end{proof}

We end this section by proving the last remark of Theorem~\ref{thm:tri-deg-vanishing}. Before proving the full claim we first consider the special case when $a_{1}=n_2$ and $a_{2}=n_1$.

\begin{prop}\label{prop:tri-deg-special-case}
Fix $\nn=(n_1,n_2)\in \Z^2_{\geq0}$, and let $\dd=\one$. The dimension of $\overline{S}_{n_2,n_1,n_1n_2}$ is one.  
\end{prop}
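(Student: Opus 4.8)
# Proof Proposal for Proposition~\ref{prop:tri-deg-special-case}

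The goal is to show $\dim \overline{S}_{(n_2,n_1),n_1 n_2} = 1$, where $\overline{S} = S/\R(\nn,\one)$ and the index weighted degree of a monomial $\xx^{\vv}\yy^{\ww}$ (with $\dd = \one$) is $\sum_i i v_i + \sum_j j w_j$. The plan is to combine an explicit lower bound (exhibiting a monomial that is \emph{not} in $\R(\one)$) with an upper bound that forces every monomial of bi-degree $(n_2,n_1)$ and index weighted degree $n_1 n_2$ to be congruent modulo $\R(\one)$ to a single representative.

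For the \textbf{lower bound}, I would show that the monomial $x_0^{n_2} y_0^{n_1}$ has index weighted degree $0$, which is the wrong degree; instead the natural candidate is the "balanced" monomial. Observe that among monomials of bi-degree $(n_2, n_1)$, the index weighted degree ranges from $0$ (all variables at index $0$) up to $n_2 n_1 + n_1 n_2 = 2 n_1 n_2$ (all at the top indices), so $n_1 n_2$ is exactly the midpoint. A natural representative of this midpoint degree is $m_0 := x_0^{n_2} y_{n_1}^{\cdots}$... more precisely one wants $\sum i v_i + \sum j w_j = n_1 n_2$ with $|\vv| = n_2$, $|\ww| = n_1$: for instance $\vv$ supported to give $\sum i v_i = $ something and $\ww$ the complement. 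A clean choice is $m_0 = x_{n_2}^{?}\cdots$; I would pick the monomial where all the "$x$-weight" comes from one variable and likewise for $y$, e.g. something like $x_1^{n_2} y_{?}$ — but the cleanest is likely $m_0 = \prod$ making the computation symmetric. The key claim is $m_0 \notin \R(\one)$, which I would prove by exhibiting a $\Z$-linear (or $\K$-linear) functional on $S_{(n_2,n_1)}$ (in the appropriate index weighted graded piece) vanishing on all of $\R(\one)_{(n_2,n_1),n_1n_2}$ but not on $m_0$; equivalently, dualize and use the hypercohomology spectral sequence from the proof of parts (1),(2), where $E^2_{0,0} \cong \overline{S}_{\aa}$ and the surviving contribution at the extreme degree is one-dimensional (this mirrors the "moreover" clause for $k = a_1 a_2$, reflected to the top degree $k = a_1 n_1 + (n_2 - a_1) a_2$, which at $\aa = (n_2, n_1)$ equals $n_1 n_2$).

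For the \textbf{upper bound}, I would argue by induction on $n_1 + n_2$ using the short exact sequences $0 \to \overline{S} \xrightarrow{\cdot x_{n_1}} \overline{S} \to \overline{S}/\langle x_{n_1}\rangle \to 0$ and the analogous one for $y_{n_2}$, exactly as in the proof of Proposition~\ref{prop:conjecture-1}, together with Lemma~\ref{lem:induction-ideal} to identify $\overline{S}/\langle x_{n_1}\rangle$ with the corresponding quotient for $\P^{n_1 - 1} \times \P^{n_2}$. Multiplication by $x_{n_1}$ shifts bi-degree by $(1,0)$ and index weighted degree by $n_1$, so $\dim \overline{S}_{(n_2,n_1),n_1n_2} = \dim \overline{S}_{(n_2 - 1, n_1), n_1 n_2 - n_1} + \dim(\overline{S}/\langle x_{n_1}\rangle)_{(n_2,n_1),n_1 n_2}$. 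For the first term, $n_1 n_2 - n_1 = n_1(n_2 - 1)$, and since $(n_2-1) n_1$ is the \emph{maximum possible} index weighted degree in bi-degree $(n_2 - 1, n_1)$ coming from the $x$-side... one checks $k = n_1(n_2-1)$ relative to $\aa = (n_2 - 1, n_1)$: the midpoint is $n_1(n_2-1)$, so actually this is again the midpoint for the smaller product — reducing $n_1 + n_2$ by... no, $n_1 + (n_2 - 1)$, so induction applies and this term is $1$ (the smaller-case proposition) \emph{unless} $n_2 - 1 = 0$, a base case. But then the second term must be $0$, so I need: $(\overline{S}/\langle x_{n_1}\rangle)_{(n_2,n_1), n_1 n_2} = 0$, i.e. for $\P^{n_1 - 1}\times\P^{n_2}$ the piece in bi-degree $(n_2,n_1)$ and index weighted degree $n_1 n_2$ vanishes. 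Here $n_1 n_2$ exceeds the new extreme $a_1(n_1 - 1) + (n_2 - a_1) a_2 = n_2(n_1-1) + (n_2 - n_2)n_1 = n_2(n_1 - 1) < n_1 n_2$, so condition (4)-type vanishing applies — but (4) is only conjectural! So I must instead prove this particular vanishing directly, likely via the same spectral sequence / Koszul strand argument, or by a monomial-divisibility argument showing every such monomial lands in $\R$.

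\textbf{Main obstacle.} The delicate point is establishing that the "overshoot" piece $(\overline{S}/\langle x_{n_1}\rangle)_{(n_2,n_1),n_1n_2}$ vanishes without invoking the unproven part (4) of Conjecture~\ref{conj:tri-deg-vanishing}. I expect this requires either a direct monomial argument — showing any monomial of bi-degree $(n_2,n_1)$ in $\K[x_0,\ldots,x_{n_1-1},y_0,\ldots,y_{n_2}]$ with index weighted degree $> n_2(n_1-1)$ must be divisible (mod $\R(\one)$) by a "too-concentrated" factor killed by the $g_t$'s, dual to Lemma~\ref{lem:tri-vanishing-special-case} — or a symmetric reflection of Theorem~\ref{thm:tri-deg-vanishing}(3) obtained by the involution $x_i \mapsto x_{n_1 - i}$, $y_j \mapsto y_{n_2 - j}$, under which $\R(\one)$ is preserved and index weighted degree $k$ maps to $a_1 n_1 + a_2 n_2 - k$; applying Proposition~\ref{prop:conjecture-1} to the reflected degree then gives exactly the needed top-degree vanishing. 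This reflection trick is, I believe, the cleanest route and also immediately yields both the vanishing and the one-dimensionality claim of the "moreover" in Theorem~\ref{thm:tri-deg-vanishing} in one stroke.
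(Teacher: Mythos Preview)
Your proposed approach differs substantially from the paper's. The paper proves this proposition directly via the hypercohomology spectral sequence for the Koszul complex on $g_0,\ldots,g_{|\nn|}$ twisted by $\O_{\P^{\nn}}(n_2,n_1)$: for this particular twist the only nonzero entry above the bottom row on the $E^1$ page is a single copy of $\K$ at position $(|\nn|+1,|\nn|)$, and since the sequence abuts to zero the differential on page $|\nn|+1$ forces $E^2_{0,0}\cong\overline{S}_{(n_2,n_1)}$ to be one-dimensional; a direct computation of the index weighted degree of that surviving class then locates it at $n_1n_2$.

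Your inductive approach has a genuine gap. After splitting via multiplication by $x_{n_1}$, the first term $\dim\overline{S}_{(n_2-1,n_1),\,n_1(n_2-1)}$ lives in the \emph{same} ring $\overline{S}$ (same $\nn$), not in a smaller product, so ``induction on $n_1+n_2$'' does not apply to it as you claim. What does work is a second application of the exact sequence to this term, now with $y_{n_2}$: the resulting first subterm has bi-degree $(n_2-1,n_1-1)$ and index weighted degree $n_1(n_2-1)-n_2=(n_2-1)(n_1-1)-1$, which vanishes by Proposition~\ref{prop:conjecture-1}; the second subterm identifies via Lemma~\ref{lem:induction-ideal} with the graded piece for $\nn''=(n_1,n_2-1)$ at bi-degree $(n''_2,n''_1)$ and index weighted degree $n''_1n''_2$, which \emph{is} the proposition for a smaller $|\nn|$. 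For the second term of your original split you do not even need the reflection: $\overline{S}/\langle x_{n_1}\rangle$ is the quotient for $\nn'=(n_1-1,n_2)$, where $a_2=n_1\geq n'_1+1$, so part~(1) of Theorem~\ref{thm:tri-deg-vanishing} already gives vanishing.

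Your reflection observation is genuinely nice --- the involution $x_i\mapsto x_{n_1-i}$, $y_j\mapsto y_{n_2-j}$ does preserve $\R(\one)$ and sends index weighted degree $k$ in bi-degree $\aa$ to $a_1n_1+a_2n_2-k$, so combined with Proposition~\ref{prop:conjecture-1} it actually proves the conjectural condition~(4) of Conjecture~\ref{conj:tri-deg-vanishing}, which the paper leaves open. But it does \emph{not} give one-dimensionality ``in one stroke'': for $\aa=(n_2,n_1)$ it only shows that $\overline{S}_{\aa}$ is concentrated in index weighted degree $n_1n_2$, and you still owe a separate argument that $\dim\overline{S}_{(n_2,n_1)}=1$. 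That is exactly what the paper extracts from the spectral sequence, or what the corrected two-step induction above yields provided the multiplication-by-$x_{n_1}$ and multiplication-by-$y_{n_2}$ sequences are genuinely short exact in the degrees used.
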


\begin{proof}
We use a hypercohomology spectral sequence argument similar to the one used in the proof of parts (1) and (2) of Theorem~\ref{thm:tri-deg-vanishing}. In particular, let $\dd=\one$ and consider the Koszul complex $\fF_{\doot}$ of $\O_{\P^{\nn}}$-modules defined on $g_{0},g_{1},\ldots,g_{|\nn|}$. Notice this complex is quasi-isomorphic to zero.

Writing $\fF(\aa)_{\doot}$ for the complex $\fF_{\doot}\otimes \O_{\P^{\nn}}(\aa)$, we consider the hypercohomology spectral sequence associated to the complex $\fF(\aa)_{\doot}$ and the global sections functor $\Gamma\left(-,\O_{\P^{\nn}}\right)$, which is defined by:
\[
E^{1}_{p,q}=R^{q}\Gamma\left(\fF(\aa)_{p}\right)=H^{q}\left(\P^{\nn},\fF(\aa)_{p}\right).
\]
This spectral sequence abuts to $\mathbb{H}^{p-q}\left(\fF(\aa)\right)$, which is zero since $\fF_{\doot}$ is quasi-isomorphic to zero. Using the fact that $\fF_{i}=\O_{\P^{\nn}}(-i\cdot\one)^{\oplus\binom{|\nn|+1}{i}}$ if $a_{1}=n_2$ and $a_2=n_1$ the $E^1$ page of this spectral sequence looks like:

\begin{tikzpicture}
\matrix (m) [matrix of math nodes,
             nodes in empty cells,
             nodes={minimum width=5ex, minimum height=6ex,
                    text depth=1ex,
                    inner sep=0pt, outer sep=0pt,
                    anchor=base},
             column sep=4ex, row sep=4ex]%
{
{\scriptstyle |\nn|} & \scriptstyle 0 & \scriptstyle 0 &  & \cdots & & \scriptstyle 0   & \scriptstyle \K& \\
\scriptstyle |\nn|-1 & \scriptstyle 0  & \scriptstyle 0  & & \cdots & & \scriptstyle 0   & \scriptstyle 0 &\\
    [7ex,between origins]
\vdots  & \vdots &  \vdots   &    & &\ddots & \vdots       & \cdots &\\
    [7ex,between origins]
\scriptstyle   1   &    \scriptstyle 0     &  \scriptstyle 0   & & \cdots & &  \scriptstyle 0 & \scriptstyle 0 & \\
    [5ex,between origins]
\scriptstyle   0   &  \scriptstyle H^0\left(\P^{\nn},\fF(\aa)_{0}\right)       &   \scriptstyle H^0\left(\P^{\nn},\fF(\aa)_{1}\right)             &     &        &  &  \scriptstyle H^0\left(\P^{\nn},\fF(\aa)_{|\nn|}\right)           &  \scriptstyle H^0\left(\P^{\nn},\fF(\aa)_{|\nn|+1}\right) & \\
    [3ex,between origins]
        &  \scriptstyle 0     &  \scriptstyle 1          &           & \cdots & &  \scriptstyle {|\nn|}           & \scriptstyle {|\nn|+1} &\\
};
\draw[shorten <= .1cm, shorten >= .1cm,-stealth] (m-1-3) -- (m-1-3 -| m-1-2.east);
\draw[shorten <= .1cm, shorten >= .1cm,-stealth] (m-2-3) -- (m-2-3 -| m-2-2.east);
\draw[shorten <= .1cm, shorten >= .1cm,-stealth] (m-4-3) -- (m-4-3 -| m-4-2.east);
\draw[shorten <= .1cm, shorten >= .1cm,-stealth] (m-5-3) -- (m-5-3 -| m-5-2.east);

\draw[shorten <= .1cm, shorten >= .1cm,-stealth] (m-1-7) -- (m-1-7 -| m-1-6.east);
\draw[shorten <= .1cm, shorten >= .1cm,-stealth] (m-2-7) -- (m-2-7 -| m-2-6.east);
\draw[shorten <= .1cm, shorten >= .1cm,-stealth] (m-4-7) -- (m-4-7 -| m-4-6.east);
\draw[shorten <= .1cm, shorten >= .1cm,-stealth] (m-5-7) -- (m-5-7 -| m-5-6.east);

\draw[shorten <= .1cm, shorten >= .1cm,-stealth] (m-1-8) -- (m-1-8 -| m-1-7.east);
\draw[shorten <= .1cm, shorten >= .1cm,-stealth] (m-2-8) -- (m-2-8 -| m-2-7.east);
\draw[shorten <= .1cm, shorten >= .1cm,-stealth] (m-4-8) -- (m-4-8 -| m-4-7.east);
\draw[shorten <= .1cm, shorten >= .1cm,-stealth] (m-5-8) -- (m-5-8 -| m-5-7.east);

\draw[thick] (m-1-1.north east) -- (m-6-1.east) ;
\draw[thick] (m-6-1.north) -- (m-6-9.north);
\end{tikzpicture}

Since $\fF_{\doot}(n_2,n_1)$ is the Koszul complex resolving $\O_{Y}$ twisted by $\O_{\P^{\nn}}(n_2,n_1)$ notice that the $0$th row on this page is nothing by the degree $(n_2,n_1)$-strand of the Koszul complex resolving $\R(\one)$. In particular, since the co-kernel of the Koszul complex resolving $\R(\one)$ is $\overline{S}$ we know that $E^2_{0,0}$ is isomorphic to $\overline{S}_{(n_2,n_1)}$. 

A cohomology computation, similar to the one done in the proof of parts (1) and (2) of Theorem~\ref{thm:tri-deg-vanishing}, shows that every map to $E_{0,0}^{j}$ has trivial source except when $j=1$ and $j=|\nn|+1$. Thus, $E_{0,0}^{|\nn|+1}\cong E_{0,0}^{2} \overline{S}_{(n_2,n_1)}$. Likewise since there are no non-trivial maps to $E_{|\nn|+1,|\nn|}^{j}$ we know that $E_{|\nn|+1,|\nn|}^{|\nn|+1}\cong E_{|\nn|+1,|\nn|}^{1}$ and is is isomorphic to $H^{|\nn|+1}(\P^{\nn},\O_{\P^{\nn}}(-n_1-1,-n_2-1))$. 

The above spectral sequence comes from the bi-complex given by tensoring the \v{C}ech complex on $\P^{\nn}$ with the Koszul complex on $g_{0},g_{1},\ldots,g_{|\nn|}$. Since both of these complexes are homogeneous with respect to the index weighted grading the resulting bi-complex, and hence the associated spectral sequence are also homogeneous with respect to the index weighted grading. Thus all vector spaces and differential appearing in the various pages of the spectral sequence will be graded with respect to the index degree. We would like to determine the index degree of 1-dimensional vector space $H^{|\nn|+1}(\P^{\nn},\O_{\P^{\nn}}(-n_1-1,-n_2-1)$. We compute this as follows: the last term of the Koszul complex is a rank 1-module generated in index weighted degree $\sum_{i} \indeg(g_i)$. The generator of the 1-dimensional vector space $H^{|\nn|+1}(\P^{\nn},\O_{\P^{\nn}}(-n_1-1,-n_2-1)$ corresponds to the Laurent monomial $\frac{1}{x_0x_1\cdots x_{n_1}y_0y_1\cdots y_{n_2}}$. So this vector space has index weighted degree:
\[
\sum_{i=0}^{|\nn|}\indeg(g_i)-\indeg \left(\prod_{i=0}^{n_1}\frac{1}{x_i}\prod_{j=1}^{n_2}\frac{1}{y_j}\right) = \sum_{i=0}^{|\nn|}i-\sum_{j=0}^{n_1}j - \sum_{k=0}^{n_2}k= \binom{n_1+n_2}{2}-\binom{n_1}{2}-\binom{n_2}{2}=n_1n_2.
\]

On the $(|\nn|+1)$-th there is a non-trivial map from $E_{|\nn|+1,|\nn|}^{|\nn|+1}$ to $E_{0,0}^{|\nn|+1}$. Moreover, since this spectral sequence abuts to zero this map must be an isomorphism. Using that $E_{|\nn|+1,|\nn|}^{|\nn|+1}\cong H^{|\nn|+1}(\P^{\nn},\O_{\P^{\nn}}(-n_1-1,-n_2-1))$ and $E_{0,0}^{|\nn|+1}\cong \overline{S}_{(n_2,n_1)}$. As $H^{|\nn|+1}(\P^{\nn},\O_{\P^{\nn}}(-n_1-1,-n_2-1))\cong \K$ this shows that $\dim \overline{S}_{n_2,n_1}=1$. Since this isomorphism respects the index weighted grading it follows that the 1-dimensional vector space of bi-degree $S_{(n_2,n_1)}$ will be supported entirely in index weighted degree $n_1n_2$.
\end{proof}

Finally, we complete the proof of Theorem~\ref{thm:tri-deg-vanishing} by proving the last claim that $\dim \overline{S}_{\aa,a_1a_2}=1$.

\begin{proof}[Proof of Theorem~\ref{thm:tri-deg-vanishing}]
We proceed by induction upon $n_1+n_2$. For the base case note that when $n_1=n_2=1$ the claim is clear as the only case is when $a_1=a_2=1$, which follows from Proposition~\ref{prop:tri-deg-special-case}. Now suppose that $a_1\leq n_2$ and $a_2\leq n_1$. Since we have already prove the claim when $a_1=n_2$ and $a_2=n_1$ without without loss of generality we may suppose that $a_2<n_1$. Considering the short exact sequence 
\begin{center}
\begin{tikzcd}[column sep = 3.5em]
0 \rar & \overline{S}\rar{\cdot x_{n_1}} & \overline{S} \rar & \overline{S}/\langle x_{n_1}\rangle \rar & 0
\end{tikzcd}
\end{center}
we see that:
\[
\dim \overline{S}_{\aa,a_1a_2}=\dim \overline{S}_{(a_1-1,a_2),a_1a_2-n_1}+\dim \overline{S}/\langle x_{n_1}\rangle_{\aa,a_1a_2}.
\]
As $a_2\leq n_1-1$ we see that $a_1a_2-n_1\leq (a_1-1)a_2-1$. So by Proposition~\ref{prop:conjecture-1}  $\dim \overline{S}_{(a_1-1,a_2),a_1a_2-n_1}=0$, and thus, is enough for $\dim \overline{S}/\langle x_{n_1}\rangle_{\aa,a_1a_2}=1$. Letting $S'=\K[x_0,x_{1},\ldots,x_{n_1-1},y_{0},y_{1},\ldots,y_{n_2}]$ with the natural bi-grading by Lemma~\ref{lem:induction-ideal} there is an isomorphism between $\overline{S}/\langle x_{n_1}\rangle$ and $S'/\R((n_1-1,n_2),\dd)$. In particular, $\dim \overline{S}/\langle x_{n_1}\rangle_{\aa,a_1a_2}$ is equal to $\dim S'/\R((n_1-1,n_2),\dd)_{\aa,a_1a_1}$, which by the inductive hypothesis is equal to one. 
\end{proof}

\section{non-Vanishing via Generalized Monomial Methods}\label{sec:monomial-techniques}

Our main result shows how to construct non-zero syzygies on $\P^{\nn}$ from monomials in $\overline{S}$. This generalizes Lemma~2.3 and Corollary~2.4 in \cite{einErmanLazarsfeld16} to a product of projective spaces. 

A key observation for these generalizations is that the condition of one monomial dividing another monomial used in \cite{einErmanLazarsfeld16} can be weakened to a condition on the index weighted degree. This turns out to be crucial as the notion of when two monomials divide each other in $\overline{S}$ is quite subtle since our regular sequence is not generated by monomials. Before stating and proving this result we first establish a few important definitions and background results. 

\begin{defn}
An element $f\in \overline{S}$ is a monomial of degree $\dd$ if and only if there exists a monomial $m\in S_{\dd}$ such that $\overline{m}=f$ where $\overline{m}$ is the image of $m$ in $\overline{S}$.
\end{defn}

\begin{defn}
An element $\zeta\otimes f\in \Alt^s \overline{S}\otimes \overline{S}$ is a monomial if and only if $\zeta= n_1\wedge\cdots\wedge n_s$ where each $n_i\in \overline{S}$ and $f\in \overline{S}$ are monomials.
\end{defn}

\begin{defn}
Given a finite subset $P\subset \overline{S}$ we write $\det P$ for the wedge product of all elements in $P$, and say $\zeta\in \Alt^{s}P$ if $\zeta=f_1\wedge \cdots\wedge f_s$ where $f_i\in P$ for all $i$. 
\end{defn}

\begin{lemma}\label{lem:stupid}
Let $\phi:V\rightarrow W$ be a map of finite dimensional $\K$-vector spaces, $\{v_1,v_2,\ldots,v_n\}$ be a basis for $V$, and $\{w_1,w_2,\ldots,w_m\}$ be a basis for $W$. If there exists $I\subset \{1,2,\ldots,n\}$ such that
\[
w_1=\sum_{i\in I}\phi(v_i)
\]
then there exists an $i\in I$ such that if we express $\phi(v_i)$ in the given basis as
\[
\phi(v_i)=c_1w_1+c_2w_2+\cdots+c_mw_m
\]
where $c_i\in \K$ then $c_1\neq0$.
\end{lemma}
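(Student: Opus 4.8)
The statement is elementary linear algebra, so the plan is short. The key point is that the coefficient of $w_1$ in $\phi(v_i)$, as $i$ ranges over $I$, must be nonzero for at least one $i$, and this follows by summing.

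\medskip

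\textbf{Proof plan.} First I would expand each $\phi(v_i)$ for $i \in I$ in the basis $\{w_1,\ldots,w_m\}$, writing $\phi(v_i) = \sum_{j=1}^m c_{i,j} w_j$ with $c_{i,j}\in\K$. Summing over $i\in I$ and using the hypothesis $w_1 = \sum_{i\in I}\phi(v_i)$, I compare coefficients of $w_1$ on both sides: since $\{w_1,\ldots,w_m\}$ is a basis, the coefficient of $w_1$ on the left is $1$, hence $\sum_{i\in I} c_{i,1} = 1$. In particular this sum is nonzero, so not every $c_{i,1}$ can be zero; pick $i\in I$ with $c_{i,1}\neq 0$. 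Writing $\phi(v_i) = c_1 w_1 + \cdots + c_m w_m$ with $c_j = c_{i,j}$, we have $c_1 = c_{i,1}\neq 0$, which is exactly the claim. (The basis $\{v_1,\ldots,v_n\}$ of $V$ plays no role beyond giving us vectors $v_i$ to feed into $\phi$; the only property used is that $\{w_1,\ldots,w_m\}$ is a basis of $W$, so that coefficient extraction is well-defined and linear.)

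\medskip

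\textbf{Main obstacle.} There is essentially no obstacle here; the only thing to be careful about is not over-claiming. The lemma does not assert that the coefficient of $w_1$ is nonzero for a specific or for a unique $i$, only that such an $i$ exists, and the averaging/summing argument gives exactly that. No finiteness or field-specific hypothesis (characteristic, algebraic closure) is needed beyond $\K$ being a field so that $1\neq 0$ in $\K$, which guarantees the sum $\sum_{i\in I} c_{i,1} = 1$ is genuinely nonzero and forces some summand to be nonzero.
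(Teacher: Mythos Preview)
Your proof is correct and is essentially the same argument as the paper's: the paper phrases it as a contradiction (assume all $c_{i,1}=0$ and obtain a dependence relation among $w_1,\ldots,w_m$), while you phrase it directly by comparing the $w_1$-coefficient on both sides, but the content is identical.
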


\begin{proof}
Towards a contradiction suppose that
\[
\phi(v_i)=c_{i,1}w_1+c_{i,2}w_2+\cdots+c_{i,m}w_m
\]
and $c_{i,1}=0$ for all $i\in I$. This means that
\[
w_1=\sum_{i\in I}\phi(v_i)=\sum_{i\in I}\left(c_{i,2}w_2+\cdots+c_{i,m}w_m\right)
\]
which contradicts the fact that $\{w_1,w_2\ldots,w_m\}$ is a basis for $W$.
\end{proof}

With this lemma and these definitions in hand, we can now state the key proposition of this section. 

\begin{prop}\label{prop:nonvanishing-bounds}
Fix $\nn=(n_1,n_2)\in \Z^2_{\geq1}$, $\dd=(d_1,d_2)\in \Z^2_{\geq1}$, $\bb=(b_1,b_2)\in\Z^2$, and $0\leq q \leq |\nn|$. Let $f\in \overline{S}_{q\dd+\bb}$ be a non-zero monomial, and let
\begin{align*}
L(f)&\coloneqq \left\{\begin{matrix} m \\ \text{a monomial of}\\ \text{bi-degree $\dd$}\end{matrix} \; \bigg| \; \begin{matrix} \indeg m \leq \indeg f\end{matrix}\right\}\subset \overline{S}_{\dd}\\
Z(f)&\coloneqq \left\{\begin{matrix} m \\ \text{a monomial of}\\ \text{bi-degree $\dd$}\end{matrix} \; \big| \; mf=0\right\}\subset \overline{S}_{\dd}
\end{align*}
be the set of monomials of bi-degree $\dd$ and of index weighted degree less than $f$ and the set of annihilators of $f$ of bi-degree $\dd$ respectively. Consider the Koszul complex:
\begin{center}
\begin{tikzcd}[column sep = 3em]
\cdots \rar{}& \Alt^{p+1}\overline{S}_{\dd}\otimes \overline{S}_{(q-1)\dd+\bb}\rar{\overline{\partial}_{p+1}}&\Alt^{p}\overline{S}_{\dd}\otimes \overline{S}_{q\dd+\bb}\rar{\overline{\partial}_p}&\Alt^{p-1}\overline{S}_{\dd}\otimes \overline{S}_{(q+1)\dd+\bb}\rar{}&\cdots
\end{tikzcd}
\end{center}
\begin{enumerate}
\item  Given $\zeta \in \Alt^pZ(f)$ the element $\zeta\otimes f\in \ker \overline{\partial}_{p}$.
\item  Given $\zeta\in \bigwedge^{s} \overline{S}_{\dd}$ such that $(\det L(f) \wedge \zeta) \otimes f\neq0$ then $(\det L(f) \wedge \zeta) \otimes f \not\in \img \overline{\partial}_{\#L(f)+s+1}$.
\end{enumerate} 
\end{prop}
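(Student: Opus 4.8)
The two parts are independent in spirit: part (1) is a direct computation showing that a wedge of annihilators of $f$ produces a Koszul cycle, and part (2) is the ``image obstruction'' argument that generalizes \cite[Lemma~2.3]{einErmanLazarsfeld16}. I would prove part (1) first. Writing $\zeta = m_1 \wedge \cdots \wedge m_p$ with each $m_i \in Z(f)$, one has
\[
\overline{\partial}_p(\zeta \otimes f) = \sum_{i=1}^{p} (-1)^i\, m_1 \wedge \cdots \wedge \widehat{m_i} \wedge \cdots \wedge m_p \otimes m_i f,
\]
and since $m_i f = 0$ in $\overline{S}$ for every $i$ by definition of $Z(f)$, every summand vanishes. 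Hence $\zeta \otimes f \in \ker\overline{\partial}_p$. This is essentially immediate.

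\textbf{Part (2): the main point.} Here I would set $P := L(f)$, $t := \#P$, and argue by contradiction: suppose $(\det P \wedge \zeta)\otimes f = \overline{\partial}_{t+s+1}(\eta)$ for some $\eta \in \Alt^{t+s+1}\overline{S}_{\dd}\otimes \overline{S}_{(q-1)\dd+\bb}$. Since $\overline{S}$ is a monomial algebra in the sense that the image of $S$'s monomials spans each graded piece, I would choose a basis of $\overline{S}_{\dd}$ consisting of (images of) monomials, extending $P$ — crucially, $P$ itself consists of distinct monomials of bidegree $\dd$, and one must check $\det P \neq 0$, which follows because the relevant monomials remain linearly independent in $\overline{S}_{\dd}$ (this uses that index weighted degree distinguishes the elements of $L(f)$, or more carefully an appeal to the structure of $\overline{S}$ via Theorem~\ref{thm:tri-deg-vanishing}). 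Similarly pick monomial bases for $\overline{S}_{(q-1)\dd+\bb}$ and $\overline{S}_{q\dd+\bb}$. Now expand $\eta$ in the induced monomial basis of $\Alt^{t+s+1}\overline{S}_{\dd}\otimes\overline{S}_{(q-1)\dd+\bb}$. The key combinatorial observation, exactly as in \cite{einErmanLazarsfeld16}, is a \emph{degree/divisibility} constraint: for a basis element $\mu_0\wedge\cdots\wedge\mu_{t+s}\otimes g$ of the source to contribute the monomial $(\det P\wedge\zeta)\otimes f$ to the target after applying $\overline{\partial}$, we need $\mu_i g$, for the omitted index $i$, to equal $f$ up to the relations in $\overline{S}$ — and because $\indeg$ is a grading under which $\overline{S}$ is homogeneous (Lemma~\ref{lem:homogeneous-wrt-idex}), we get $\indeg(\mu_i) + \indeg(g) = \indeg(f)$, so $\indeg(\mu_i)\le \indeg(f)$ and thus $\mu_i \in L(f) = P$. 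Therefore every basis term of $\eta$ that survives into our target coordinate must involve at least $t+s$ of its wedge factors among... — more precisely, after applying $\overline{\partial}$, the wedge part of a surviving term is an $(t+s)$-fold wedge obtained by deleting one factor, and for it to match $\det P\wedge\zeta$ (which already contains all $t$ elements of $P$ plus the $s$ factors of $\zeta$), the deleted factor lies in $P$. Summing the contributions over the terms of $\eta$ whose deleted factor ranges over a subset $I\subseteq\{1,\dots,t+s+1\}$ — the indices pointing into $P$ — puts us exactly in the situation of Lemma~\ref{lem:stupid}, with $V$ the span of these $\eta$-terms, $W = \overline{S}_{q\dd+\bb}$ (the $f$-coordinate after fixing the wedge part), $\phi$ the relevant component of $\overline{\partial}$, and $w_1 = f$. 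Lemma~\ref{lem:stupid} then forces some single $\eta$-term to already produce a nonzero $f$-component on its own, i.e. some monomial $\mu f = f$ with $\mu\in P$ — but $f\ne 0$ and $\mu f$ has strictly larger bidegree than... no: $\mu f$ has bidegree $q\dd+\bb$ while $f$ has bidegree $q\dd+\bb$, so this is not an immediate contradiction; rather the contradiction comes from the wedge part: that surviving $\eta$-term, having all $t+s+1$ of its factors constrained (the $t+s$ matching $\det P\wedge\zeta$ plus one more in $\overline{S}_{\dd}$), would need $\det P\wedge\zeta\wedge\mu = 0$ in $\Alt^{t+s+1}\overline{S}_\dd$ for \emph{every} choice making the $f$-coordinate nonzero, forcing $(\det P\wedge\zeta)\otimes f$ itself to vanish, contrary to hypothesis.

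\textbf{Where the difficulty lies.} The routine parts are the vanishing in (1) and the bookkeeping of which wedge factors can be deleted. The genuinely delicate step — and the one I expect to spend the most care on — is the translation of the divisibility condition of \cite{einErmanLazarsfeld16} into the index-weighted-degree condition defining $L(f)$: one must verify that ``$\mu g = f$ in $\overline{S}$ with $g$ a monomial'' really does imply $\indeg\mu \le \indeg f$ (this direction is easy from $\indeg$-homogeneity) \emph{and} that $L(f)$ as defined is large enough that $\det L(f)$ is the right object — in particular that the elements of $L(f)$ are $\K$-linearly independent in $\overline{S}_\dd$ so that $\det L(f)\ne 0$ whenever $(\det L(f)\wedge\zeta)\otimes f\ne 0$ is even meaningful. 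I would handle linear independence by invoking Theorem~\ref{thm:tri-deg-vanishing} (or its $\dd\ne\one$ analog via Proposition~\ref{prop:containment}) to control exactly which monomial classes collapse in $\overline{S}_\dd$, and then run the Lemma~\ref{lem:stupid} argument on the quotient basis. The rest is formal manipulation of the Koszul differential.
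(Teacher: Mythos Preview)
Your proposal is correct and follows essentially the same approach as the paper: part (1) is the identical direct computation, and part (2) is the same contradiction argument via Lemma~\ref{lem:stupid}, using index-weighted-degree homogeneity (Lemma~\ref{lem:homogeneous-wrt-idex}) to force the deleted wedge factor into $L(f)$ and then contradicting the nonvanishing hypothesis because $\overline{n}_0\wedge\det L(f)\wedge\zeta=0$ once $\overline{n}_0\in L(f)$. One small simplification you can make: there is no need to separately argue that the elements of $L(f)$ are linearly independent in $\overline{S}_\dd$ (via Theorem~\ref{thm:tri-deg-vanishing} or otherwise), since the hypothesis $(\det L(f)\wedge\zeta)\otimes f\neq 0$ already forces $\det L(f)\neq 0$.
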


\begin{proof}
First let us focus our attention on part (1). By definition, since $\zeta\in \Alt^pZ(f)$ we may write it as $\zeta=\zeta_1\wedge\zeta_2\wedge\cdots\wedge \zeta_p$ where $\zeta_i\in Z(f)\subset \overline{S}_{\dd}$. Thus, we see that
\begin{align*}
\overline{\partial}_{p}\left(\zeta\otimes f\right)=\overline{\partial}_{p}\left(\zeta_1\wedge \zeta_2\wedge\cdots\wedge \zeta_p\otimes f\right)&=\sum_{i=1}^p(-1)^i\zeta_1\wedge \zeta_2\wedge\cdots\wedge \hat{\zeta}_i\wedge \cdots \wedge \zeta_p\otimes (\zeta_if)\\
&=\sum_{i=1}^p(-1)^i\zeta_1\wedge \zeta_2\wedge\cdots\wedge \hat{\zeta}_i\wedge \cdots \wedge \zeta_p\otimes 0=0,
\end{align*}
where the penultimate equality follows from the fact that $\zeta_i\in Z(f)$, and so by definition annihilates $f$.

We now shift to proving part (2). Towards a contradiction suppose that $(\det L(f) \wedge \zeta) \otimes f$ is non-zero and in the image of the map:
\begin{center}
\begin{tikzcd}[ampersand replacement=\&,column sep = 3em]
\Alt^{\#L(f)+s+1}\overline{S}_{\dd}\otimes \overline{S}_{(q-1)\dd+\bb}\rar{\overline{\partial}_{\#L(f)+s+1}}\& \Alt^{\#P(f)+s}\overline{S}_{\dd}\otimes \overline{S}_{q\dd+\bb}
\end{tikzcd}.
\end{center}
This means there exists $\xi_{j}\in \Alt^{\#L(f)+s+1}\overline{S}_{\dd}$ and $g_j\in \overline{S}_{(q-1)\dd+\bb}$ such that:
\begin{equation}\label{eq:mon-method}
\overline{\partial}_{\#L(f)+s+1}\left(\sum_{j=1}^t \xi_j\otimes g_j\right)=\sum_{j=1}^t \overline{\partial}_{\#L(f)+s+1}\left(\xi_j\otimes g_j\right)=(\det L(f) \wedge \zeta) \otimes f.
\end{equation}
By the linearity of $\overline{\partial}_{\#L(f)+s+1}$ we may, without without loss of generality, assume that  $\xi_j\otimes g_j$ is a monomial.

Now the monomials in $\Alt^{\#L(f)+s+1}\overline{S}_{\dd}\otimes \overline{S}_{(q-1)\dd+\bb}$ and in $\Alt^{\#L(f)+s}\overline{S}_{\dd}\otimes \overline{S}_{q\dd+\bb}$ both form spanning sets. Therefore, since $(\det L(f) \wedge \zeta) \otimes f$ is a monomial in $\Alt^{\#L(f)+s}\overline{S}_{\dd}\otimes \overline{S}_{q\dd+\bb}$ and $\xi_j\otimes g_j$ is a monomial in $\Alt^{\#L(f)+s}\overline{S}_{\dd}\otimes \overline{S}_{q\dd+\bb}$ by Lemma~\ref{lem:stupid} for Equation~\eqref{eq:mon-method} to be true it must be the case that $(\det L(f) \wedge \zeta) \otimes f$  appears as a term in $\overline{\partial}_{\#L(f)+s+1}\left(\xi_j\otimes g_j\right)$ for some $j$. 

Since $\xi_j\otimes g_j$ is a monomial we may write $\xi_j\otimes g_j$ as  $\overline{n}_0\wedge \overline{n}_1\wedge \cdots \wedge \overline{n}_{\#P(f)+s}\otimes \overline{g}$ where $g$ and the $n_i$ are monomials in $S$ and $\overline{g}$ and $\overline{n}_i$ are their images in $\overline{S}$. For $(\det L(f) \wedge \zeta) \otimes f$ to appear as a term in $\overline{\partial}_{\#L(f)+s+1}(\overline{n}_0\wedge \overline{n}_1\wedge \cdots \wedge \overline{n}_{\#L(f)+s}\otimes \overline{g})$ without without loss of generality we have that 
\[
 \overline{n}_1\wedge \overline{n}_2\wedge\cdots \wedge \overline{n}_{\#L(f)+s}\otimes (\overline{n}_0\overline{g})=(\det L(f) \wedge \zeta) \otimes f.
\]
This implies two equalities:
\begin{enumerate}
\item $\overline{n}_1\wedge \overline{n}_2\wedge\cdots \wedge \overline{n}_{\#L(f)+s}=(\det L(f) \wedge \zeta)$ as elements in $\Alt^{\#L(f)+s}\overline{S}_{\dd}$, and
\item $\overline{n}_0\overline{g}=f$ as elements in $\overline{S}_{q\dd+\bb}$.
\end{enumerate}
The second of these means that 
\begin{align*}
\indeg(\overline{n}_0)&+\indeg(\overline{g})=\indeg(\overline{n}_0\overline{g})=\indeg(f),
\end{align*}
which implies that $\indeg \overline{n}_0 \leq \indeg f$ meaning $\overline{n}_{0}\in L(f)$. However, combining this fact with the first equality we see that
\[
\overline{n}_0\wedge \overline{n}_1\wedge \cdots \wedge \overline{n}_{\#L(f)+s}\otimes \overline{g}=\overline{n}_0\wedge(\det L(f) \wedge \zeta)\otimes \overline{g}=0
\]
contradicting the fact that $(\det L(f) \wedge \zeta) \otimes f$ is non-zero.
\end{proof}

Immediately from Proposition~\ref{prop:nonvanishing-bounds} we are able to deduce a non-vanishing result giving non-trivial syzygies in a range determined by $L(f)$ and $Z(f)$. 

\begin{cor}\label{cor:monomial-nonvanishing}
Fix $\nn=(n_1,n_2)\in \Z^2_{\geq1}$, $\dd=(d_1,d_2)\in \Z^2_{\geq1}$, and $\bb=(b_1,b_2)\in\Z^2$. Given $0\leq q \leq |\nn|$ and $0\leq k \leq q$ Let $f\in \overline{S}_{q\dd+\bb}$ be a non-zero monomial such that $L(f)\subset Z(f)$ then.
$K_{p,q}^{\overline{R}}\left(\overline{S}(\bb;\dd)\right)$ for all $p$ in the following range:
\[
\#L(f)\leq p \leq \#Z(f).
\]
\end{cor}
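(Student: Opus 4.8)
I would deduce Corollary~\ref{cor:monomial-nonvanishing} directly from Proposition~\ref{prop:nonvanishing-bounds}: part~(1) hands us cocycles in the complex computing $K^{\overline R}_{p,q}(\overline S(\bb;\dd))$, and part~(2) certifies that suitable such cocycles are not coboundaries. So for every integer $p$ in the asserted range I would exhibit one explicitly nonzero class.

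\textbf{Construction of the class.} Fix $p$ with $\#L(f)\le p\le \#Z(f)$ and put $s:=p-\#L(f)\ge 0$. Since $L(f)\subseteq Z(f)$, the set $Z(f)\setminus L(f)$ has $\#Z(f)-\#L(f)\ge s$ elements, so choose a subset $T\subseteq Z(f)\setminus L(f)$ with $\#T=s$ and set $\zeta:=\det T\in\bigwedge^{s}\overline S_\dd$. Then $\det L(f)\wedge\zeta=\pm\det(L(f)\cup T)$ is a wedge of $p$ elements of $Z(f)$, so Proposition~\ref{prop:nonvanishing-bounds}(1) gives $\omega:=(\det L(f)\wedge\zeta)\otimes f\in\ker\overline\partial_p$. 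Because $\#L(f)+s+1=p+1$, Proposition~\ref{prop:nonvanishing-bounds}(2) shows that as soon as $\omega\neq 0$ one has $\omega\notin\img\overline\partial_{p+1}$. Hence $\omega$ represents a nonzero element of $\ker\overline\partial_p/\img\overline\partial_{p+1}=K^{\overline R}_{p,q}(\overline S(\bb;\dd))$. As $p$ ranged over all integers in $[\#L(f),\#Z(f)]$, this gives the assertion, modulo the verification that $\omega\neq 0$.

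\textbf{Main obstacle.} The one step that is not formal bookkeeping is showing $\omega=(\det L(f)\wedge\zeta)\otimes f\neq 0$. Since $f\neq 0$ by hypothesis, this is equivalent to $\det(L(f)\cup T)\neq 0$ in $\bigwedge^{p}\overline S_\dd$, i.e.\ to the $\K$-linear independence in $\overline S_\dd$ of the $p$ bi-degree-$\dd$ monomials comprising $L(f)\cup T$. Under the identification $\overline S_\dd\cong S_\dd/\Span_\K(g_0,\dots,g_{|\nn|})$, every $g_t$ — and therefore every element of $\Span_\K(g_0,\dots,g_{|\nn|})$ — is supported on the ``pure-power'' monomials $x_i^{d_1}y_j^{d_2}$; consequently the only linear relations among bi-degree-$\dd$ monomials in $\overline S_\dd$ are those carried by pure powers, namely $\sum_{i+j=t}x_i^{d_1}y_j^{d_2}=0$ for each $t$ together with $x_0^{d_1}y_0^{d_2}=x_{n_1}^{d_1}y_{n_2}^{d_2}=0$. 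Thus the needed independence holds precisely when $L(f)$ and $Z(f)$ avoid a complete such family of pure powers — which is arranged by the explicit monomials $f$ used when the corollary is applied (see Section~\ref{sec:fqk}). I expect this linear-independence check to be the only delicate point; once it is in place, the corollary is immediate from Proposition~\ref{prop:nonvanishing-bounds}.
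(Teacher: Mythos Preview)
Your construction and invocation of Proposition~\ref{prop:nonvanishing-bounds} match the paper's proof exactly: pick $s=p-\#L(f)$ monomials from $Z(f)\setminus L(f)$, wedge with $\det L(f)$, and apply parts~(1) and~(2). You are in fact more careful than the paper on the one delicate point: the paper dispatches the nonvanishing of $\omega$ with the claim that ``distinct monomials\ldots form part of a basis for $\overline{S}_{\dd}$'', which is not literally true (e.g.\ $\overline{x_0^{d_1}y_1^{d_2}}=-\,\overline{x_1^{d_1}y_0^{d_2}}$ via $g_1$), whereas you correctly isolate the obstruction to the pure-power relations and note that it must be checked for the specific $f$ at hand---which is indeed how the corollary is used in Section~\ref{sec:special-case}.
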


\begin{proof}
Fix $\#L(f)\leq p \leq \#Z(f)$ and write $p$ as $p=\#L(f)+s$. Since $s \leq \#\left(Z(f)\setminus L(f)\right)$ we may pick $s$ distinct elements $\zeta_1,\zeta_2,\ldots,\zeta_{s}\in (Z(f)\setminus L(f))$. Set $\zeta = \zeta_1\wedge \zeta_2\wedge\cdots\wedge \zeta_s$. Note that since the $\zeta_i$ are distinct monomials -- and so form part of a basis for $\overline{S}_{\dd}$ -- $\zeta$ is non-zero. By part (1) of Proposition~\ref{prop:nonvanishing-bounds} $(\zeta\wedge \det L(f))\otimes f$ is in the kernel of $\partial_{\#L(f)+s}$, while by part (2) it is not in the image of $\partial_{\#L(f)+s+1}$. Hence it represents a non-zero element in $K_{p,q}^{\overline{R}}\left(\overline{S}(\bb;\dd)\right)$.
\end{proof}

\section{Special Monomails}\label{sec:fqk}

In Section~\ref{sec:monomial-techniques} we showed that given a non-zero monomial $f\in \overline{S}_{q\dd+\bb}$, satisfying certain technical conditions described in Proposition~\ref{prop:nonvanishing-bounds}, one can construct a non-zero syzygy $\zeta_{1}\wedge \zeta_{2}\wedge\cdots \wedge \zeta_{p}\otimes f$ in $K_{p,q}^{\overline{R}}(S(\bb;\dd))$ where $p$ is controlled in part by the annihilators of $f$. We now turn to describing the monomials $f$ we will use in our proof of Theorem~\ref{thm:main}. 

Broadly, the idea is that having fixed $0\leq q\leq |\nn|$ and $\bb\in \Z^2$ we will construct a non-zero monomial $f_{q,k,\bb}\in\overline{S}$ of bi-degree $q\dd$ for every $0\leq k\leq q$. Each $f_{q,k,\bb}$ will play the role of $f$ in Proposition~\ref{prop:nonvanishing-bounds} and Corollary~\ref{cor:monomial-nonvanishing}, and will produce non-trivial syzygies (assuming a few technical conditions) in the range 
\[
\binom{d_1+k}{k}\binom{d_2+(q-k)}{q-k}-(q+2)\leq p \leq r_{\nn,\dd}-\binom{d_1+n_1-k}{n_1-k}\binom{d_2+n_2-(q-k)}{n_2-(q-k)}-(|\nn|+1).
\]

Initially, we will not explicitly define $f_{q,k,\bb}$. Instead we utilize the fact that in certain degrees, described by Theorem~\ref{thm:tri-deg-vanishing}, $(S/\R(\one))_{\aa,t}$ is one dimension. In particular, we define $f_{q,k,\bb}$ in terms of the generator of $(S/\R(\one))_{(k,q-k),k(q-k)}$, which we denote by $\tilde{f}_{q,k}$. Thus, the first part of this section focuses on studying the generator $\tilde{f}_{q,k}$ of $(S/\R(\one))_{(k,q-k),k(q-k)}$. In particular, we show that these $\tilde{f}_{q,k}$ satisfy a series of recursive relations from which it is possible to explicitly write down $\tilde{f}_{q,k}$. 

Following this, in the second part of this section, we define $f_{q,k,\bb}$ and study it's properties. Namely, we  show that $f_{q,k,\bb}$ is well-defined, and then show that it is in fact a non-zero in $\overline{S}$. Moreover, we see that $f_{q,k,\bb}$ is supported on the variables $x_{0},x_{1},\ldots,x_{q-k},y_{0},y_{1},\ldots,y_{k}$.

We then end this section by studying the linear annihilators of $f_{q,k,\bb}$. For example, we show that if $\bb=\zero$ then  $x_{i}f_{q,k,\bb}$ and $y_{j}f_{q,k,\bb}$ equal zero as elements of $\overline{S}$ for $i=0,1,\ldots,(q-k-1)$ and $j=0,1,\ldots,(k-1)$. Understanding these linear annihilators of $f_{q,k,\bb}$ is crucial as it allows us to bound $\#Z(f_{q,k,\bb})$ appearing in Proposition~\ref{prop:nonvanishing-bounds}  and Corollary~\ref{cor:monomial-nonvanishing}.

\subsection{Defining \texorpdfstring{$\tilde{f}_{q,k}$}{$\~{f}_{q,k}$}}
 
As described in Theorem~\ref{thm:tri-deg-vanishing} for certain degrees $(S/\R(\one))_{\aa,t}$ is one dimensional. The goal of this section is to study the generator, unique up to scalar multple, of $(S/\R(\one))_{\aa,t}$. This is useful as the monomial $f_{q,k,\bb}$ we will define in the next subsection is built from these generators. In particular, many of our results about $f_{q,k,\bb}$ given later in this section are built on the understanding of the generator of $(S/\R(\one))_{\aa,t}$. 
 
\begin{defn}
Fix $\nn=(n_1,n_2)\in \Z^2_{\geq1}$. Given $0< q \leq |\nn|$ and $0\leq k \leq q$ such that $q-k\leq n_1$ and $k\leq n_2$ let $\tilde{f}_{q,k}$ be the unique, up to scalar multiplication, non-zero monomial in $(S/\R(\one))_{(k,q-k),k(q-k)}$.
\end{defn}

A crucial property of $\tilde{f}_{q,k}$, which is not immediately obvious from the definition, is that $\tilde{f}_{q,k}$ is supported on $x_{0},x_{1},\ldots,x_{q-k},y_{0},y_{1},\ldots,y_{k}$. In fact, in $\tilde{f}_{q,k}$ is independent, up to the isomorphism described in Lemma~\ref{lem:induction-ideal}, of the $\nn$ so long as $n_1\geq q-k$ and $n_2\geq k$. This is the content for the following lemmas.

\begin{lemma}\label{lem:lem-fqk-independent}
Fix $\nn=(n_1,n_2)\in \Z^2_{\geq1}$, $0< q \leq |\nn|$ and $0\leq k \leq q$ such that $q-k\leq n_1$ and $k\leq n_2$. Let $S'=\K[x_{0},x_{1},\ldots,x_{n_1-i},y_{0},y_{1},\ldots,y_{n_2-j}]$. If $q-k\leq n_1-i$ and $k\leq n_2-j$ and $\tilde{f}'_{q,k}$ is the unique, up to scalar multiplication, non-zero monomial in $(S'/\R(\one))_{(k,q-k),k(q-k)}$ then $\tilde{f}_{q,k}=\tilde{f}'_{q,k}$ under the isomorphism described in Lemma~\ref{lem:induction-ideal}.
\end{lemma}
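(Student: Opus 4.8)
The plan is to delete the extra variables one at a time, showing that each deletion induces an isomorphism on the relevant multigraded strand and therefore transports $\tilde f_{q,k}$ to $\tilde f'_{q,k}$. By the symmetry between the two factors of $\P^{\nn}$ together with an induction on $i+j$, it suffices to treat the case $i=1$, $j=0$, i.e.\ the deletion of $x_{n_1}$; the case $i=0$, $j=1$ is the same argument with the roles of the $x$'s and $y$'s exchanged, and the general case follows by composing finitely many such elementary steps. At each step the running inequalities $q-k\le n_1-i$ and $k\le n_2-j$ are precisely what makes the argument go through, as will become clear.

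So assume $i=1$, $j=0$. Because $x_{n_1}$ and $\R(\nn,\one)$ are homogeneous with respect to both the $\Z^2$-bi-grading and the index weighted grading (Lemma~\ref{lem:homogeneous-wrt-idex}), the quotient map
\[
\pi\colon \overline{S}=S/\R(\nn,\one)\longrightarrow \overline{S}/\langle\overline{x}_{n_1}\rangle\;\cong\;S'/\R(n_1-1,n_2,\one)
\]
(the last isomorphism being Lemma~\ref{lem:induction-ideal}) is bihomogeneous and index-homogeneous, and it carries monomials to monomials or to $0$. Restrict $\pi$ to the strand of bi-degree $(k,q-k)$ and index weighted degree $k(q-k)$. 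Its target $\overline{S}'_{(k,q-k),k(q-k)}$ is one-dimensional and spanned by $\tilde f'_{q,k}$ by Theorem~\ref{thm:tri-deg-vanishing} (applicable to $S'$ since $q-k\le n_1-1$ and $k\le n_2$), and its source $\overline{S}_{(k,q-k),k(q-k)}$ is one-dimensional and spanned by $\tilde f_{q,k}$, again by Theorem~\ref{thm:tri-deg-vanishing}. Hence it is enough to prove that $\pi$ is nonzero on this strand.

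The kernel of $\pi$ in this degree is $\bigl(x_{n_1}\overline{S}\bigr)_{(k,q-k),k(q-k)}$, which is the image of $\overline{S}_{(k-1,q-k),\,k(q-k)-n_1}$ under multiplication by $x_{n_1}$, using that $\deg x_{n_1}=(1,0)$ and $\indeg x_{n_1}=n_1$ when $\dd=\one$. When $k\ge1$ and $q-k\ge1$, the hypothesis $q-k\le n_1-1$ gives $k(q-k)-n_1\le (k-1)(q-k)-1$, so Proposition~\ref{prop:conjecture-1} forces $\overline{S}_{(k-1,q-k),\,k(q-k)-n_1}=0$; if instead $k=0$ or $q-k=0$, then the source bi-degree $(k-1,q-k)$ is not in $\Z^2_{\ge0}$ or the source index weighted degree is negative, and the piece vanishes trivially. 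Thus $\pi$ is injective on the strand, hence an isomorphism of one-dimensional spaces, and since it preserves monomiality it must send $\tilde f_{q,k}$ to a nonzero monomial, which is necessarily $\tilde f'_{q,k}$ up to the scalar inherent in its definition. Chaining the single-variable steps identifies $\tilde f_{q,k}$ with $\tilde f'_{q,k}$ under the isomorphism of Lemma~\ref{lem:induction-ideal}.

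The one point requiring care is the vanishing $\overline{S}_{(k-1,q-k),\,k(q-k)-n_1}=0$: this amounts to checking that the shifted degree lands in the range governed by part (3) of Theorem~\ref{thm:tri-deg-vanishing}, and it is exactly here that the hypothesis $q-k\le n_1-i$, rather than merely $q-k\le n_1$, is needed. Note also that the argument never uses that $x_{n_1}$ is a nonzerodivisor on $\overline{S}$; only the surjection $\overline{S}\twoheadrightarrow\overline{S}/\langle\overline{x}_{n_1}\rangle$ and the vanishing of its kernel in a single multi-degree are invoked. Everything else is routine bookkeeping with the two auxiliary gradings.
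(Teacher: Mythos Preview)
Your argument is correct and follows the same overall plan as the paper's---restrict the quotient map $\overline S\twoheadrightarrow\overline S'$ to the one-dimensional strand and show it is an isomorphism---but you do more work than necessary. The paper simply uses that a quotient map is surjective in every degree: once both source and target strands are known to be one-dimensional (Theorem~\ref{thm:tri-deg-vanishing} applied to $S$ and to $S'$), a surjection between one-dimensional spaces is automatically an isomorphism, and $\tilde f_{q,k}$ is carried to $\tilde f'_{q,k}$. There is no need to compute the kernel at all.

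Your route via Proposition~\ref{prop:conjecture-1} to kill $\overline S_{(k-1,q-k),\,k(q-k)-n_1}$ is valid but introduces a small bookkeeping gap: that proposition requires both $\aa\in\Z^2_{\ge1}$ and $0\le k'$, yet when $k=1$ you have $a_1=k-1=0$, and for any $k\ge2$ with $k(q-k)<n_1$ you have $k'<0$. In each of these cases the conclusion still holds, because the index weighted degree is negative and hence the piece is empty; you handled the analogous trivialities for $k=0$ and $q-k=0$ but not these. It is a one-line fix and does not affect correctness.
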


\begin{proof}
By Lemma~\ref{lem:induction-ideal} there exists an isomorphism of $\K$-vector spaces:
\begin{equation}\label{eq:lem-fqk-independent}
\begin{tikzcd}[column sep = 4em]
\left(\frac{S'}{\R((q-k,k),\one)}\right)_{(k,q-k),k(q-k)} \arrow[r, leftrightarrow, "\sim"] &
\left(\frac{S}{\R(\nn,\one)+\langle x_{n_1-i+1},x_{n_1-i+2},\ldots,x_{n_1},y_{n_2-j+1},y_{n_2-j+2},\ldots,y_{n_2}\rangle}\right)_{(k,q-k),k(q-k)}.
\end{tikzcd}
\end{equation}
The final remark of Theorem~\ref{thm:tri-deg-vanishing} shows that left hand side of \eqref{eq:lem-fqk-independent} is one dimensional, and so right hand side is also one dimension. That said since $\tilde{f}_{q,k}$ is defined to be a representative for the unique, up to scalar multiplication, generator of $S/\R(\nn,\one)_{(k,q-k),k(q-k)}$, and so $\tilde{f}_{q,k}=\tilde{f}'_{q,k}$ under the isomorphism in \eqref{eq:lem-fqk-independent}. 
\end{proof}

\begin{lemma}\label{lem:fqk-relations}
Fix $\nn=(n_1,n_2)\in \Z^2_{\geq1}$, $0< q \leq |\nn|$ and $0\leq k \leq q$ such that $q-k\leq n_1$ and $k\leq n_2$. The following identities hold in $\overline{S}$:
\begin{enumerate}
\item $\tilde{f}_{q,k}=x_{q-k}\tilde{f}_{q-1,k-1}$,
\item $\tilde{f}_{q,k}=y_{k}\tilde{f}_{q-1,k}$, 
\item $\tilde{f}_{q,k}=x_{q-k}y_{k-1}\tilde{f}_{q-2,k-1}$, and
\item $\tilde{f}_{q,k}=x_{q-k-1}y_{k}\tilde{f}_{q-2,k-1}$.
\end{enumerate}
\end{lemma}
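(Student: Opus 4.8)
The plan is to derive all four identities from a single computation: that multiplication by an appropriate variable is an isomorphism between two one-dimensional graded pieces of $\overline S$.

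First I would settle the bookkeeping. With $\dd=\one$ one has $\indeg x_i=i$ and $\indeg y_j=j$, so $\tilde f_{q-1,k-1}$ has bi-degree $(k-1,q-k)$ and index weighted degree $(k-1)(q-k)$, whence $x_{q-k}\tilde f_{q-1,k-1}$ has bi-degree $(k,q-k)$ and index weighted degree $k(q-k)$ -- exactly the tri-degree of $\tilde f_{q,k}$; likewise $y_k\tilde f_{q-1,k}$, $x_{q-k}y_{k-1}\tilde f_{q-2,k-1}$ and $x_{q-k-1}y_k\tilde f_{q-2,k-1}$ all land in bi-degree $(k,q-k)$ and index weighted degree $k(q-k)$. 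By the final sentence of Theorem~\ref{thm:tri-deg-vanishing} the space $\overline S_{(k,q-k),k(q-k)}$ is one-dimensional and spanned by $\tilde f_{q,k}$; since $\tilde f_{q,k}$ is defined only up to scalar, it therefore suffices to prove that each of the four products is non-zero in $\overline S$ and to normalize the representatives of the $\tilde f_{q',k'}$ accordingly. Using Lemma~\ref{lem:lem-fqk-independent} -- together with the observation that the natural surjection $\overline S\twoheadrightarrow S'/\R((q-k,k),\one)$ afforded by Lemma~\ref{lem:induction-ideal} is an isomorphism in bi-degree $(k,q-k)$ and index weighted degree $k(q-k)$, both sides being one-dimensional -- I would first reduce to the case $\nn=(q-k,k)$, so that $x_{q-k}=x_{n_1}$ and $y_k=y_{n_2}$. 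It is essential here \emph{not} to shrink $\nn$ further, since if $n_1<q-k$ or $n_2<k$ then the target space $\overline S_{(k,q-k),k(q-k)}$ collapses to zero.

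For part (1), with $\nn=(q-k,k)$, I would take the piece of bi-degree $(k,q-k)$ and index weighted degree $k(q-k)$ of the right-exact sequence $\overline S\xrightarrow{\,\cdot x_{q-k}\,}\overline S\to\overline S/\langle x_{q-k}\rangle\to0$, namely
\[
\overline S_{(k-1,q-k),(k-1)(q-k)} \xrightarrow{\,\cdot x_{q-k}\,} \overline S_{(k,q-k),k(q-k)} \longrightarrow \bigl(\overline S/\langle x_{q-k}\rangle\bigr)_{(k,q-k),k(q-k)} \longrightarrow 0 .
\]
By Lemma~\ref{lem:induction-ideal} one has $\overline S/\langle x_{q-k}\rangle\cong\K[x_0,\dots,x_{q-k-1},y_0,\dots,y_k]/\R((q-k-1,k),\one)$, and as $a_1=k\ge1$ and $a_2=q-k\ge(q-k-1)+1$, part (1) of Theorem~\ref{thm:tri-deg-vanishing} forces the right-hand term to vanish; hence $\cdot x_{q-k}$ is surjective. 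Its source and target are both one-dimensional (again the last sentence of Theorem~\ref{thm:tri-deg-vanishing}), so $\cdot x_{q-k}$ is an isomorphism, and therefore $x_{q-k}\tilde f_{q-1,k-1}$, being $x_{q-k}$ applied to the generator $\tilde f_{q-1,k-1}$ of the source, generates $\overline S_{(k,q-k),k(q-k)}$, i.e.\ is a non-zero multiple of $\tilde f_{q,k}$. One really must use the right-exact sequence here: the $x_i$ are genuine zerodivisors on $\overline S$, so $\cdot x_{n_1}$ need not be injective globally, and its injectivity in this single tri-degree is extracted a posteriori from the dimension count. The same argument with $y_{n_2}=y_k$ in place of $x_{n_1}$, invoking part (2) of Theorem~\ref{thm:tri-deg-vanishing} for the vanishing of the cokernel, gives part (2); alternatively part (2) follows from part (1) via the involution of $S$ swapping $x_i\leftrightarrow y_i$ and $n_1\leftrightarrow n_2$, which fixes each $g_t$ when $\dd=\one$ and sends $\tilde f_{q,k}\mapsto\tilde f_{q,q-k}$.

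Parts (3) and (4) I would then obtain by substitution: applying part (2) to the pair $(q-1,k-1)$ gives $\tilde f_{q-1,k-1}=y_{k-1}\tilde f_{q-2,k-1}$, and plugging this into (1) yields (3); applying part (1) to the pair $(q-1,k)$ gives $\tilde f_{q-1,k}=x_{q-k-1}\tilde f_{q-2,k-1}$, and plugging this into (2) yields (4). The degenerate ranges ($q=k$ in (1), $k=0$ in (2), $q\le2$ in (3)--(4)) I would dispose of separately: there the $\tilde f$'s in question are supported on a single block of variables and $\R$ has no element in the relevant pure bi-degree, so non-vanishing is immediate. I expect the only genuinely delicate step to be the first one -- keeping the tri-degree bookkeeping exactly straight and selecting the minimal $\nn$ so that $\tilde f_{q,k},\tilde f_{q-1,k-1},\tilde f_{q-1,k},\tilde f_{q-2,k-1}$ are all simultaneously present while $\overline S_{(k,q-k),k(q-k)}$ stays one-dimensional rather than zero.
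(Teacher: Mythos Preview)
Your approach is correct and essentially the same as the paper's. Both arguments identify the multiplication map $\overline S_{(k-1,q-k),(k-1)(q-k)}\xrightarrow{\cdot x_{q-k}}\overline S_{(k,q-k),k(q-k)}$ between one-dimensional pieces and use Lemma~\ref{lem:induction-ideal} together with part~(1) of Theorem~\ref{thm:tri-deg-vanishing} to show the cokernel vanishes; the paper phrases this as a divisibility argument by contradiction (``if $x_{q-k}$ did not divide $\tilde f_{q,k}$ then $\tilde f_{q,k}$ would survive in a zero-dimensional quotient''), while you phrase it directly as surjectivity via the right-exact sequence, but these are the same observation. Parts~(3) and~(4) are derived from (1) and (2) by substitution in both proofs.
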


\begin{proof}
Parts (3) and (4) follow by combining parts (1) and (2). For part (1) consider a graded component of the maps induced by multiplication by $x_{q-k}$:
\begin{equation}\label{eq:fqk-relations}
\begin{tikzcd}[column sep = 4em]
\left(\frac{S}{\R(\one)}\right)_{(k-1,q-k),(k-1)(q-k)} \rar{x_{q-k}} & \left(\frac{S}{\R(\one)}\right)_{(k,q-k),k(q-k)} 
\end{tikzcd}.
\end{equation}
By Theorem~\ref{thm:tri-deg-vanishing} both the source and target of the map in \eqref{eq:fqk-relations} are one dimensional $\K$-vector spaces. Moreover, $\tilde{f}_{q-1,k-1}$ is a generator for the left hand side and $\tilde{f}_{q,k}$ is a generator for the right hand side. Thus, it is enough to show that $x_{q-k}$ divides $\tilde{f}_{q,k}$. 

Towards a contradiction suppose that $x_{q-k}$ does not divide $\tilde{f}_{q,k}$. Letting $S'=\K[x_{0},x_{1},\ldots,x_{q-k-1},y_{0},y_{1},\ldots,y_{k}]$ by Lemma~\ref{lem:induction-ideal} there is an isomorphism of $\K$-vector spaces:
\begin{equation}\label{eq:fqk-relations-2}
\begin{tikzcd}[column sep = 4em]
\left(\frac{S'}{\R((q-k-1,k),\one)}\right)_{(k,q-k),k(q-k)} \arrow[r, leftrightarrow, "\sim"] &
\left(\frac{S}{\R(\nn,\one)+\langle x_{q-k},x_{q-k+1},\ldots,x_{n_1}\rangle}\right)_{(k,q-k),k(q-k)}.
\end{tikzcd}
\end{equation}
Part (1) of Theorem~\ref{thm:tri-deg-vanishing} implies the source of the map in \eqref{eq:fqk-relations} has dimension zero. However, by Lemma~\ref{lem:lem-fqk-independent} $\tilde{f}_{q,k}$ is non-zero after quotienting $S$ by $\R(\nn,\one)+\langle x_{q-k+1},x_{q-k+1},\ldots,x_{n_1}\rangle$. Thus, since $\tilde{f}_{q,k}$ is not divisible by $x_{q-k}$ it is non-zero in the target of \eqref{eq:fqk-relations}. Hence the target of \eqref{eq:fqk-relations} has dimension one, which is a contradiction. Part (2) of this lemma follows by a similar argument. 
\end{proof}

\begin{remark}
While we will not make use of this, notice that as a consequence of Lemma~\ref{lem:fqk-relations} we are able to write down an explicit representative for $\tilde{f}_{q,k}$. Namely, as $\tilde{f}_{q,0}$ has bi-degree $(0,q)$ and index degree $0$ we know $\tilde{f}_{q,0}=y_{0}^{q}$, and by a similar argument $\tilde{f}_{q,q}=x_{0}^{q}$. So using inductive structure of Lemma~{lem:fqk-relations} with these base cases we can find explicit representatives for $\tilde{f}_{q,k}$. For example,
\[
\tilde{f}_{5,2}=x_{3}y_{1}\tilde{f}_{3,1}=x_{3}y_{1}\left(x_{2}y_{0}\tilde{f}_{1,0}\right)=x_{2}x_{3}y_{0}^{2}y_1.
\]
\end{remark}

\subsection{Defining $f_{q,k,\bb}$}

We are now ready to define $f_{q,k,\bb}$ in terms of $\tilde{f}_{q,k}$. While in Theorem~\ref{thm:main2} we restrict our attention to the case when $\bb\in \Z^{2}_{\geq0}$ we will define $f_{q,k,\bb}$ for a more general range of $\bb$. In particular, under suitable hypothesis we will allow $\bb$ to have negative coordinates. One might hope that these more general $f_{q,k,\bb}$'s may be used to extend Theorem~\ref{thm:main2} to additional cases of $\bb$, even though we do not carry this out here. 

\begin{notation}
Given a monomial $\xx^{\vv}\yy^{\yy}$ in $S$ (or $\overline{S}$) we write $(\xx^{\vv}\yy^{\yy})^{\dd}$ for the monomial $\xx^{d_1\vv}\yy^{d_2\yy}$ in $S$ (or $\overline{S}$).
\end{notation}

\begin{defn}\label{defn:fqkb}
Fix $\nn=(n_1,n_2)\in \Z^2_{\geq1}$, $\dd=(d_1,d_2)\in \Z^2_{\geq1}$, and $\bb\in \Z^2$. Let $0< q \leq |\nn|$ and $0\leq k \leq q$ such that $q-k\leq n_1$ and $k\leq n_2$. If $d_1>q-k+b_1$ and $d_2>k+b_2$ then define $f_{q,k,\bb}$ to be:
\[
f_{q,k,\bb}\coloneqq
\begin{cases}
\left(x_0\cdots x_{q-1}\right)^{d_1-1}x_{q}^{q+b_1}y_{0}^{b_2}\tilde{f}_{q,0}^{\dd}& \mbox{if } k=0 \\
\left(x_0\cdots x_{q-k-1}\right)^{d_1-1}x_{q-k}^{q-k+b_1}\left(y_0\cdots y_{k-1}\right)^{d_2-1}y_{k}^{k+b_2}\tilde{f}_{q,k}^{\dd} & \mbox{if } k\neq0
\end{cases}.
\]
\end{defn}

Note that from the definition it is not necessarily clear that $f_{q,k,\bb}$ is in fact an element of $\overline{S}$. In particular, since $b_1$ and $b_2$ may be negative the terms $x_{q-k}^{q-k+b_1}$ and $y_{k}^{k+b_2}$ appearing in the definition of $f_{q,k,\bb}$ need not be monomials. In fact, if \textit{both} $b_1$ and $b_2$ are sufficiently negative $f_{q,k,\bb}$ is not an element of $\overline{S}$. 

However, the following lemma shows that as long as  at least one of $q-k+b_1$ and $k+b_2$ are non-negative and $\dd\gg\bb$ we need not worry about this except in a few edge cases (i.e. when $k=0$ or $k=q$). The key insight is that the relations in Lemma~\ref{lem:fqk-relations} provide the following alternative definitions for $f_{q,k,\bb}$ when exactly one of $q-k+b_1$ and $k+b_2$ is negative are immediate.

\begin{lemma}\label{lem-fqkb-well-defined}
Fix $\nn=(n_1,n_2)\in \Z^2_{\geq1}$, $\dd=(d_1,d_2)\in \Z^2_{\geq1}$, and $\bb\in \Z^2$. Let $0< q \leq |\nn|$ and $0\leq k \leq q$ such that $q-k\leq n_1$ and $k\leq n_2$.
\begin{enumerate}
\item Suppose $q-k+b_1<0$ and $k+b_2\geq0$. If $d_1>|q-k+b_1|$ and $k\neq0$ then $f_{q,k,\bb}\in \overline{S}$ and 
\[
f_{q,k,\bb}=
\left(x_0\cdots x_{q-k-1}\right)^{d_1-1}x_{q-k}^{q-k+b_1+d_1}\left(y_0\cdots y_{k-1}\right)^{d_2-1}y_{k}^{k+b_2}\tilde{f}_{q-1,k-1}^{\dd}.
\]
\item Suppose $q-k+b_1\geq0$ and $k+b_2<0$. If $d_2>|k+b_2|$ then $f_{q,k,\bb}\in \overline{S}$ and 
\[
f_{q,k,\bb}=
\begin{cases}
\left(x_0\cdots x_{q-1}\right)^{d_1-1}x_{q}^{q+b_1}y_{0}^{b_2+d_2}\tilde{f}_{q-1,0}^{\dd}& \mbox{if } k=0 \\
\left(x_0\cdots x_{q-k-1}\right)^{d_1-1}x_{q-k}^{q-k+b_1}\left(y_0\cdots y_{k-1}\right)^{d_2-1}y_{k}^{k+b_2+d_2}\tilde{f}_{q-1,k}^{\dd} & \mbox{if } k\neq0
\end{cases}.
\]
\end{enumerate}
\end{lemma}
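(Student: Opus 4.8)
The plan is to reduce both statements directly to the definition of $f_{q,k,\bb}$ (Definition~\ref{defn:fqkb}) together with the recursive relations from Lemma~\ref{lem:fqk-relations}. The issue being addressed is purely a well-definedness one: the exponents $q-k+b_1$ (in case (1)) or $k+b_2$ (in case (2)) appearing in Definition~\ref{defn:fqkb} may be negative, so a priori the expression written there is a Laurent monomial, not an element of $\overline{S}$; the claim is that the relations of Lemma~\ref{lem:fqk-relations} let us absorb the offending negative power into the $\tilde f_{q,k}^{\dd}$ factor, producing a bona fide monomial provided $d_1$ (resp. $d_2$) is large enough to compensate.

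For part (1), I would start from the ``$k\neq 0$'' branch of Definition~\ref{defn:fqkb}, namely $f_{q,k,\bb}=(x_0\cdots x_{q-k-1})^{d_1-1}x_{q-k}^{q-k+b_1}(y_0\cdots y_{k-1})^{d_2-1}y_k^{k+b_2}\tilde f_{q,k}^{\dd}$, interpreted a priori as a Laurent expression. Now apply part (1) of Lemma~\ref{lem:fqk-relations}, which gives $\tilde f_{q,k}=x_{q-k}\tilde f_{q-1,k-1}$ in $\overline S$, hence $\tilde f_{q,k}^{\dd}=x_{q-k}^{d_1}\tilde f_{q-1,k-1}^{\dd}$ in $\overline S$. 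Substituting, the $x_{q-k}$-power becomes $x_{q-k}^{q-k+b_1+d_1}$, and the hypothesis $d_1>|q-k+b_1|$ with $q-k+b_1<0$ gives $q-k+b_1+d_1>0$, so this is a genuine monomial; all other factors are genuine monomials since $d_1-1\geq 0$, $d_2-1\geq 0$, and $k+b_2\geq 0$. This simultaneously shows $f_{q,k,\bb}\in\overline S$ and establishes the displayed alternative formula. The one subtlety is that the substitution must be justified as an identity \emph{in $\overline S$}: one should note that the Laurent expression in Definition~\ref{defn:fqkb} is being used only as a formal bookkeeping device, and the honest statement is that $x_{q-k}^{d_1}\cdot(\text{the rest})$ equals, in $\overline S$, the monomial obtained after the rewrite --- i.e., one multiplies and divides by $x_{q-k}^{d_1}$ only inside expressions where the result is manifestly a polynomial.

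For part (2), the argument is parallel but uses part (2) of Lemma~\ref{lem:fqk-relations}, $\tilde f_{q,k}=y_k\tilde f_{q-1,k}$, so $\tilde f_{q,k}^{\dd}=y_k^{d_2}\tilde f_{q-1,k}^{\dd}$. In the $k\neq 0$ branch this turns $y_k^{k+b_2}$ into $y_k^{k+b_2+d_2}$, positive by $d_2>|k+b_2|$. In the $k=0$ branch of Definition~\ref{defn:fqkb}, $f_{q,0,\bb}=(x_0\cdots x_{q-1})^{d_1-1}x_q^{q+b_1}y_0^{b_2}\tilde f_{q,0}^{\dd}$, and here the offending factor is $y_0^{b_2}$ with $b_2<0$; using $\tilde f_{q,0}=y_0\tilde f_{q-1,0}$ (Lemma~\ref{lem:fqk-relations}(2) with $k=0$) gives $\tilde f_{q,0}^{\dd}=y_0^{d_2}\tilde f_{q-1,0}^{\dd}$, so $y_0^{b_2}$ becomes $y_0^{b_2+d_2}$, positive by $d_2>|b_2|=|k+b_2|$. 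In both sub-cases the remaining exponents are non-negative ($q-k+b_1\geq 0$ by hypothesis, $d_1-1\geq 0$, $d_2-1\geq 0$), so $f_{q,k,\bb}\in\overline S$ and the stated formulas hold.

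I do not expect a genuine obstacle here: this lemma is essentially a sanity check that the two competing descriptions of $f_{q,k,\bb}$ agree and define an element of $\overline S$ in the stated mixed-sign regime. The only point requiring a little care in the write-up is the interpretation of Definition~\ref{defn:fqkb} when an exponent is negative --- one should phrase the proof so that the first honest equality appears only after the rewrite via Lemma~\ref{lem:fqk-relations} has cleared the denominators, rather than manipulating $\tilde f_{q,k}$ and the Laurent factor independently. Beyond that, it is a direct substitution and an inequality check.
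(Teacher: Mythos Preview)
Your proposal is correct and takes exactly the approach the paper does: the paper's own proof is the single sentence ``Apply Lemma~\ref{lem:fqk-relations},'' and what you have written is precisely the unpacking of that application --- using $\tilde f_{q,k}=x_{q-k}\tilde f_{q-1,k-1}$ (resp.\ $\tilde f_{q,k}=y_k\tilde f_{q-1,k}$) to absorb the offending negative power of $x_{q-k}$ (resp.\ $y_k$) into a genuine non-negative exponent. Your caution about interpreting the Laurent expression is well placed but not something the paper dwells on.
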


\begin{proof}
Apply Lemma~\ref{lem:fqk-relations}.
\end{proof}

\begin{remark}
Since $\tilde{f}_{q,k}$ has bi-degree $(k,q-k)$ the monomial $f_{q,k,\bb}$ has bi-degree $q\dd$. This is key as we wish to apply these $f_{q,k,\bb}$ as in Proposition~\ref{prop:nonvanishing-bounds}. 
\end{remark}

The remainder of this section, is dedicated to proving that for certain $\bb$ the monomial $f_{q,k,\bb}$ is non-zero as an element of $\overline{S}$, and describing a certain subset of $(\R:f_{q,k,\bb})$. An important property, key to proving both of these, is that $(f_{q,k,\bb}/\remd(f_{q,k,\bb}))^{1/\dd}$ is equal to $\tilde{f}_{q',k'}$ for some $q'$ and $k'$. 

\begin{lemma}\label{lem:floor-of-fqk}
Fix $\nn=(n_1,n_2)\in \Z^2_{\geq1}$, $\dd=(d_1,d_2)\in \Z^2_{\geq1}$, and $\bb\in \Z^{2}$. Further fix $0< q \leq |\nn|$ and $0\leq k \leq q$ such that $q-k\leq n_1$ and $k\leq n_2$. Suppose that $|(q-k)+b_1|<d_1$ and $|k+b_2|<d_2$.
\begin{enumerate}
\item If $0\geq (q-k)+b_1$ and $0\geq k+b_2$ then $\left(\frac{f_{q,k,\bb}}{\remd(f_{q,k,\bb})}\right)^{1/\dd}$ is equal to $\tilde{f}_{q,k}$.
\item If $q-k+b_1<0$, $k+b_2\geq0$, and $k\neq0$ then $\left(\frac{f_{q,k,\bb}}{\remd(f_{q,k,\bb})}\right)^{1/\dd}$ is equal to $\tilde{f}_{q-1,k-1}$. \\
\item If $q-k+b_1\geq0$, $k+b_2<0$ then $\left(\frac{f_{q,k,\bb}}{\remd(f_{q,k,\bb})}\right)^{1/\dd}$ is equal to $\tilde{f}_{q-1,k}$.
\end{enumerate}
\end{lemma}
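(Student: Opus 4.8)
The plan is to compute $\remd(f_{q,k,\bb})$ and $f_{q,k,\bb}/\remd(f_{q,k,\bb})$ directly from the defining monomial expressions, reading off each exponent modulo the appropriate $d_i$. Recall that by definition $\remd$ of a monomial replaces each exponent by its residue modulo $d_1$ (for the $x$'s) or $d_2$ (for the $y$'s), and dividing by $\remd$ then leaves the "integer part" which lies in $S^{[\dd]}$; applying $(-)^{1/\dd}$ strips the exponents down by a factor of $d_i$. So the whole computation is bookkeeping on the exponent vectors, and the content of the lemma is that after doing this bookkeeping the surviving monomial is exactly $\tilde f_{q',k'}$ for the indicated $(q',k')$. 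The main subtlety — and the reason the three cases appear — is that the "edge" factors $x_{q-k}^{q-k+b_1}$ and $y_k^{k+b_2}$ can have negative exponents, in which case one must first rewrite $f_{q,k,\bb}$ using the alternative expressions from Lemma~\ref{lem-fqkb-well-defined} so that all exponents are genuinely nonnegative before taking $\remd$.

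For part (1), where $0\ge (q-k)+b_1$ and $0\ge k+b_2$ (note $k\neq0$ forces us into the $k\neq 0$ branch of Definition~\ref{defn:fqkb}; I should double-check the $k=0$ sub-case separately using the first branch), I would write out
\[
f_{q,k,\bb}=\left(x_0\cdots x_{q-k-1}\right)^{d_1-1}x_{q-k}^{q-k+b_1}\left(y_0\cdots y_{k-1}\right)^{d_2-1}y_{k}^{k+b_2}\tilde f_{q,k}^{\dd}.
\]
Write $\tilde f_{q,k}=\prod x_i^{\alpha_i}\prod y_j^{\beta_j}$ (a monomial supported on $x_0,\dots,x_{q-k}$, $y_0,\dots,y_k$, with all exponents nonnegative, by the discussion after the definition of $\tilde f_{q,k}$), so $\tilde f_{q,k}^{\dd}=\prod x_i^{d_1\alpha_i}\prod y_j^{d_2\beta_j}$. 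Since $0\le q-k+b_1<d_1$ and $0\le k+b_2<d_2$ by hypothesis (combining $|(q-k)+b_1|<d_1$ with $0\ge(q-k)+b_1$ — wait, this gives $-d_1<q-k+b_1\le 0$, so the exponent $q-k+b_1$ is in fact $\le 0$; here I need to be careful that the exponent is actually $0$, i.e. $(q-k)+b_1=0$, or else $f_{q,k,\bb}$ genuinely isn't a monomial and one is implicitly in case (2) or (3) — I'd reconcile the case hypotheses carefully, probably the intended reading is $0\ge$ means $=0$ given the constraint, or the "$\remd$" notation is being extended). Granting the exponents are all in $[0,d_i)$ times a contribution from $\tilde f^{\dd}$, the residues are: each $x_i$ for $i<q-k$ contributes exponent $d_1-1$ plus $d_1\alpha_i$, whose residue mod $d_1$ is $d_1-1$; each $y_j$ for $j<k$ similarly has residue $d_2-1$; the $x_{q-k}$ factor contributes residue $q-k+b_1$ and $y_k$ contributes residue $k+b_2$; every other variable has exponent a multiple of $d_i$, hence residue $0$. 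Therefore $\remd(f_{q,k,\bb})=\left(x_0\cdots x_{q-k-1}\right)^{d_1-1}x_{q-k}^{q-k+b_1}\left(y_0\cdots y_{k-1}\right)^{d_2-1}y_{k}^{k+b_2}$, and dividing leaves exactly $\tilde f_{q,k}^{\dd}$, so $(f_{q,k,\bb}/\remd(f_{q,k,\bb}))^{1/\dd}=\tilde f_{q,k}$, as claimed.

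For parts (2) and (3), I would first invoke Lemma~\ref{lem-fqkb-well-defined} to replace $f_{q,k,\bb}$ by the alternative expression in which the offending exponent has been shifted by $+d_i$ (and correspondingly $\tilde f_{q,k}$ replaced by $\tilde f_{q-1,k-1}$ in case (2), resp. $\tilde f_{q-1,k}$ in case (3), via Lemma~\ref{lem:fqk-relations}). Now all exponents are nonnegative and in the correct residue range, and I repeat the same exponent bookkeeping: the shifted factor $x_{q-k}^{q-k+b_1+d_1}$ now has residue $q-k+b_1+d_1\in[0,d_1)$ (using $|q-k+b_1|<d_1$), so it survives in $\remd$, and the quotient is $\tilde f_{q-1,k-1}^{\dd}$, whose $1/\dd$-image is $\tilde f_{q-1,k-1}$; case (3) is identical with the $y_k$ factor and $\tilde f_{q-1,k}$ (and the $k=0$ sub-branch handled using the $k=0$ line of Lemma~\ref{lem-fqkb-well-defined}(2)). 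The main obstacle I anticipate is not any single hard step but rather getting the case hypotheses to line up cleanly — in particular making precise in case (1) why the exponents $q-k+b_1$ and $k+b_2$ land in $[0,d_i)$ (it must be that the relevant quantity is exactly $0$, or that $\remd$ is being applied to the honest monomial representative after the Lemma~\ref{lem-fqkb-well-defined} rewriting), and making sure the $k=0$ edge case is not silently dropped. Once the correct monomial representative is in hand in each case, the rest is a one-line residue computation, so I would present it compactly rather than belabor it.
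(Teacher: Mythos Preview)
Your proposal is correct and follows essentially the same approach as the paper: the paper also proves part~(1) by direct residue bookkeeping (treating $k=0$ and $k\neq 0$ separately) and then remarks that parts~(2) and~(3) follow in the same way after first invoking Lemma~\ref{lem-fqkb-well-defined}. Your suspicion about the inequality directions in part~(1) is well-founded: the intended hypotheses are $0\leq (q-k)+b_1$ and $0\leq k+b_2$ (so that the edge exponents lie in $[0,d_i)$), and the paper's own computation uses exactly this, so you should read ``$0\geq$'' as a typo for ``$0\leq$'' throughout parts~(1) of this lemma and the two results that follow it.
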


\begin{proof}
We only prove part (1) as the the remaining parts follow in a similar manner from Lemma~\ref{lem-fqkb-well-defined}. First we handle the case when $k=0$. The key facts are that $\remd(\tilde{f}_{q,0}^{\dd})$ is equal to $\tilde{f}_{q,0}$, and that $\remd(x_q^{q+b_1}) =x_q^{q+b_1}$ and $\remd(y_{0}^{b_1})=y_0^{b_2}$ since $q+b_1<d_{1}$ and $b_2<d_2$ respectively. Computing:
\[
\left(\frac{f_{q,0,\bb}}{\remd(f_{q,0,\bb})}\right)^{1/\dd}=\left(\frac{\left(x_0\cdots x_{q-1}\right)^{d_1-1}x_{q}^{q+b_1}y_{0}^{b_2}\tilde{f}_{q,0}^{\dd}}{\left(x_0\cdots x_{q-1}\right)^{d_1-1}x_{q}^{q+b_1}y_0^{b_2}}\right)^{1/\dd}=\left(\tilde{f}_{q,0}^{\dd}\right)^{1/\dd}=\tilde{f}_{q,0}.
\]
The case when $k\neq0$ is essentially the same. Again the key facts are that $\remd\left(\tilde{f}_{q,k}^{\dd}\right)=\tilde{f}_{q,k}$, and that $\remd\left(x_{q-k}^{q-k+b_1}\right) =x_{q-k}^{q-k+b_1}$ and $\remd\left(y_{k}^{k+b_2}\right)=y_{k}^{k+b_2}$ since $(q-k+b_1)<d_{1}$ and $k+b_2<d_2$. From these then the result follows from the computation:
\[
\left(\frac{f_{q,k,\bb}}{\remd(f_{q,k,\bb})}\right)^{1/\dd}=\left(\frac{\left(x_0\cdots x_{q-k-1}\right)^{d_1-1}x_{q-k}^{q-k+b_1}\left(y_0\cdots y_{k-1}\right)^{d_2-1}y_{k}^{k+b_2}\tilde{f}_{q,k}^{\dd}}{\left(x_0\cdots x_{q-k-1}\right)^{d_1-1}x_{q-k}^{q-k+b_1}\left(y_0\cdots y_{k-1}\right)^{d_2-1}y_{k}^{k+b_2}}\right)^{1/\dd}=\left(\tilde{f}_{q,k}^{\dd}\right)^{1/\dd}=\tilde{f}_{q,k}.
\]
\end{proof}

Note the conditions that $d_1>|q-k+b_1|$ and $d_2>|k+b_2|$ ensure that $(q-k+b_1)$ and $k+b_2$ remain unchanged modulo $d_1$ and $d_2$ respectively. This, and Lemma~\ref{lem-fqkb-well-defined}, are the source of conditions appearing in Theorem~\ref{thm:main}. Using this previous lemma together with Proposition~\ref{prop:containment} we conclude that $f_{q,k,\bb}\neq0$ as an element of $\overline{S}$.

\begin{lemma}\label{lem-fqkb-nonzero}
Fix $\nn=(n_1,n_2)\in \Z^2_{\geq1}$, $\dd=(d_1,d_2)\in \Z^2_{\geq1}$, and $\bb\in \Z^2$. Further fix $0< q \leq |\nn|$ and $0\leq k \leq q$ such that $q-k\leq n_1$ and $k\leq n_2$. Suppose $|q-k+b_1|< d_1$ and $|k+b_2|<d_2$. If one of the following pairs of inequalities hold:
\begin{enumerate}
\item $0\geq (q-k)+b_1$ and $0\geq k+b_2$,
\item $q-k+b_1<0$, $k+b_2\geq0$, or
\item $q-k+b_1\geq0$, $k+b_2<0$
\end{enumerate}
then the monomial $f_{q,k,\bb}\neq0$ as an element of $\overline{S}$.
\end{lemma}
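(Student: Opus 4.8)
The plan is to deduce non-vanishing of $f_{q,k,\bb}$ in $\overline{S}$ from Proposition~\ref{prop:containment}, which says that a modular-homogeneous element $h\in S$ lies in $\R(\dd)$ if and only if $(h/\remd(h))^{1/\dd}\in\R(\one)$. First I would note that each $f_{q,k,\bb}$ as written in Definition~\ref{defn:fqkb} (or in the alternative forms of Lemma~\ref{lem-fqkb-well-defined} in cases (2) and (3)) is a genuine monomial of $S$ under the stated hypotheses $|q-k+b_1|<d_1$ and $|k+b_2|<d_2$ together with the sign conditions (1), (2), or (3): the exponents $q-k+b_1$, $k+b_2$ (resp. their $+d_1$, $+d_2$ shifts) are nonnegative, so no formal negative powers appear. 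In particular $f_{q,k,\bb}$ is modular-homogeneous, so Proposition~\ref{prop:containment} applies.

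Next I would invoke Lemma~\ref{lem:floor-of-fqk}, which computes $(f_{q,k,\bb}/\remd(f_{q,k,\bb}))^{1/\dd}$ explicitly in each of the three cases: it equals $\tilde f_{q,k}$ in case (1), $\tilde f_{q-1,k-1}$ in case (2), and $\tilde f_{q-1,k}$ in case (3). So by Proposition~\ref{prop:containment}, $f_{q,k,\bb}\in\R(\dd)$ if and only if the corresponding $\tilde f_{q',k'}\in\R(\one)$. Therefore it suffices to show each of $\tilde f_{q,k}$, $\tilde f_{q-1,k-1}$, $\tilde f_{q-1,k}$ is nonzero in $S/\R(\one)=\overline{S}$.

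This last point is where the work lives, but it is essentially immediate from the definition of $\tilde f_{q',k'}$: by the final remark of Theorem~\ref{thm:tri-deg-vanishing}, $(S/\R(\one))_{(k',q'-k'),k'(q'-k')}$ is one-dimensional, and $\tilde f_{q',k'}$ is by definition a nonzero representative of its generator — hence nonzero in $\overline{S}$ — provided the index constraints $q'-k'\le n_1$ and $k'\le n_2$ hold. In case (1) these are $q-k\le n_1$, $k\le n_2$, which are part of the hypotheses; in case (2) they are $(q-1)-(k-1)=q-k\le n_1$ and $k-1\le n_2$, both implied; in case (3) they are $(q-1)-k\le n_1$ and $k\le n_2$, both implied. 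I should also handle the edge case $k=0$ in case (3) separately, where Lemma~\ref{lem:floor-of-fqk} gives $\tilde f_{q-1,0}=y_0^{q-1}$, manifestly nonzero in $\overline{S}$. Assembling these, $(f_{q,k,\bb}/\remd(f_{q,k,\bb}))^{1/\dd}\notin\R(\one)$ in every case, so $f_{q,k,\bb}\notin\R(\dd)$, i.e. $f_{q,k,\bb}\neq0$ in $\overline{S}$.

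The main obstacle, such as it is, is purely bookkeeping: one must track which of the three alternative presentations of $f_{q,k,\bb}$ is the correct monomial form in each sign regime, and check carefully that in every regime the relevant $\tilde f_{q',k'}$ satisfies the index bounds needed for the one-dimensionality statement of Theorem~\ref{thm:tri-deg-vanishing} to apply. There is no genuine analytic difficulty — the real content was already absorbed into Theorem~\ref{thm:tri-deg-vanishing}, Proposition~\ref{prop:containment}, and Lemma~\ref{lem:floor-of-fqk}; the proof here is just their concatenation together with a short case analysis.
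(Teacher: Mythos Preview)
Your proposal is correct and follows essentially the same approach as the paper: apply Proposition~\ref{prop:containment} to reduce to showing $(f_{q,k,\bb}/\remd(f_{q,k,\bb}))^{1/\dd}\notin\R(\one)$, invoke Lemma~\ref{lem:floor-of-fqk} to identify this with some $\tilde f_{q',k'}$, and then observe that $\tilde f_{q',k'}\notin\R(\one)$ by its very definition. The paper's proof is terser---it simply says ``by construction $\tilde f_{q',k'}\not\in\R(\one)$'' without the case-by-case index checks you supply---but the logic is identical.
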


\begin{proof}
By Proposition~\ref{prop:containment} $f_{q,k,\bb}\not\in \R(\dd)$ if and only if $\left(\frac{f_{q,k,\bb}}{\remd(f_{q,k,\bb})}\right)^{1/\dd}\not\in \R(\one)$. By Lemma~\ref{lem:floor-of-fqk} $\left(\frac{f_{q,k,\bb}}{\remd(f_{q,k,\bb})}\right)^{1/\dd}$ is equal to $\tilde{f}_{q',k'}$ for some $q'$ and $k'$. So $f_{q,k,\bb}\not\in \R(\dd)$ if and only if $\tilde{f}_{q',k'}\not\in \R(\one)$. However, by construction $\tilde{f}_{q',k'}\not\in \R(\one)$.
\end{proof}

\subsection{Linear Annihilators of $f_{q,k,\bb}$}

Finally, we show that while $f_{q,k,\bb}$ is non-zero it is annihilated by $x_i$ and $y_j$ for where $i$ and $j$ are by $q,k,$ and $\bb$. For example, if $\bb=\zero$ then  $x_{i}f_{q,k,\bb}$ and $y_{j}f_{q,k,\bb}$ are equal to zero for a number of $i=0,1,\ldots,(q-k-1)$ and $j=0,1,\ldots,(k-1)$. Understanding these linear annihilators of $f_{q,k,\bb}$ is crucial to the proof of Theorem~\ref{thm:main} and Theorem~\ref{thm:main2} as it allows us to bound the $\#Z(f_{q,k,\bb})$, which is crucial to Corollary~\ref{cor:monomial-nonvanishing}.

\begin{prop}\label{prop:linear-annhilators}
Fix $\nn=(n_1,n_2)\in \Z^2_{\geq1}$ and $\dd=(d_1,d_2)\in \Z^2_{\geq1}$. Further fix integers $0<q\leq|\nn|$ and $0\leq k\leq q$ such that $(q-k)\leq n_1$ and $k\leq n_2$. Suppose $|q-k+b_1|< d_1$ and $|k+b_2|<d_2$.
\begin{enumerate} 
\item If $0\geq (q-k)+b_1$ and $0\geq k+b_2$ then 
\[
\langle x_0,x_1,\ldots,x_{q-k-1},y_{0},y_{1},\ldots,y_{k-1}\rangle \subset \left(0:_{\overline{S}}f_{q,k,\bb}\right).
\]
\item If $q-k+b_1<0$, $0k+b_2\geq0$, and $k\neq0$ then 
\[
\langle x_0,x_1,\ldots,x_{q-k-1},y_{0},y_{1},\ldots,y_{k-2}\rangle \subset \left(0:_{\overline{S}}f_{q,k,\bb}\right).
\]
\item If $q-k+b_1\geq0$, $k+b_2<0$ then
\[
\langle x_0,x_1,\ldots,x_{q-k-2},y_{0},y_{1},\ldots,y_{k-1}\rangle \subset \left(0:_{\overline{S}}f_{q,k,\bb}\right).
\]
\end{enumerate}
\end{prop}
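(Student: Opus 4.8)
The plan is to prove all three parts by one argument, reducing the vanishing $v\cdot f_{q,k,\bb}=0$ in $\overline S=S/\R(\dd)$, for $v$ a variable among the asserted generators, to a vanishing in $S/\R(\one)$ that is then read off from part (3) of Theorem~\ref{thm:tri-deg-vanishing}. Fix one of the cases (1), (2), (3) and set $(q',k')$ equal to $(q,k)$, $(q-1,k-1)$, or $(q-1,k)$ accordingly; Lemma~\ref{lem:floor-of-fqk} then gives $\big(f_{q,k,\bb}/\remd(f_{q,k,\bb})\big)^{1/\dd}=\tilde f_{q',k'}$, and by construction $\tilde f_{q',k'}$ is a monomial of bi-degree $(k',q'-k')$ and index weighted degree $k'(q'-k')$. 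Unwinding Definition~\ref{defn:fqkb} together with Lemma~\ref{lem-fqkb-well-defined} shows that in every case
\[
f_{q,k,\bb}=(x_0\cdots x_{q-k-1})^{d_1-1}\,(y_0\cdots y_{k-1})^{d_2-1}\,M\,\big(\tilde f_{q',k'}\big)^{\dd},
\]
where $M$ is a monomial supported only on $x_{q-k}$ and $y_k$, whose exponents there lie in $[0,d_1)$ and $[0,d_2)$ thanks to the standing hypotheses $|q-k+b_1|<d_1$, $|k+b_2|<d_2$ (the only exception is the degenerate case $k=0$, where $M$ may involve $y_0$, but then no $y_j$ with $j\le k-1$ is among the generators). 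The variables named in parts (1)--(3) are exactly the $x_i$ with $i\le q'-k'-1$ and the $y_j$ with $j\le k'-1$: indeed $q'-k'-1$ equals $q-k-1$ in cases (1), (2) and $q-k-2$ in case (3), while $k'-1$ equals $k-1$ in cases (1), (3) and $k-2$ in case (2).

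So fix such a variable, say $v=x_i$ with $i\le q'-k'-1\le q-k-1$ (the case $v=y_j$, $j\le k'-1\le k-1$, is identical with $d_2$ in place of $d_1$). Then $v^{d_1-1}$ divides $f_{q,k,\bb}$, and its cofactor $f_{q,k,\bb}/v^{d_1-1}$ has $v$-exponent divisible by $d_1$: the displayed factor $(x_0\cdots x_{q-k-1})^{d_1-1}$ is exhausted, $M$ does not involve $x_i$ since $i<q-k$, and $(\tilde f_{q',k'})^{\dd}$ contributes a multiple of $d_1$. Hence the $x_i$-exponent of $f_{q,k,\bb}$ is congruent to $d_1-1$ modulo $d_1$, so multiplying by $v$ drops the $x_i$-exponent of the remainder to $0$, whence $\remd(v\,f_{q,k,\bb})=\remd(f_{q,k,\bb})/v^{d_1-1}$ and
\[
\Big(\frac{v\,f_{q,k,\bb}}{\remd(v\,f_{q,k,\bb})}\Big)^{1/\dd}=v\cdot\Big(\frac{f_{q,k,\bb}}{\remd(f_{q,k,\bb})}\Big)^{1/\dd}=v\,\tilde f_{q',k'}.
\]
Since $v\,f_{q,k,\bb}$ is a monomial, hence homogeneous for the modular grading, Proposition~\ref{prop:containment} shows that $v\,f_{q,k,\bb}\in\R(\dd)$ if and only if $v\,\tilde f_{q',k'}\in\R(\one)$.

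It remains to prove this last membership, which is where Theorem~\ref{thm:tri-deg-vanishing}(3) enters. For $v=x_i$ with $i\le q'-k'-1$, the monomial $x_i\,\tilde f_{q',k'}$ has bi-degree $(k'+1,\,q'-k')$ and index weighted degree $i+k'(q'-k')\le (q'-k'-1)+k'(q'-k')=(k'+1)(q'-k')-1$, so by part (3) of Theorem~\ref{thm:tri-deg-vanishing} the corresponding graded piece of $S/\R(\one)$ vanishes, i.e.\ $x_i\,\tilde f_{q',k'}\in\R(\one)$. For $v=y_j$ with $j\le k'-1$, the monomial $y_j\,\tilde f_{q',k'}$ has bi-degree $(k',\,q'-k'+1)$ and index weighted degree $j+k'(q'-k')\le (k'-1)+k'(q'-k')=k'(q'-k'+1)-1$, and the same theorem gives $y_j\,\tilde f_{q',k'}\in\R(\one)$. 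Translating the bounds $q'-k'-1$ and $k'-1$ back through the three choices of $(q',k')$ recovers precisely the ideals in parts (1), (2), (3).

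The conceptual content is entirely in these three moves; the work that requires care is the bookkeeping for $f_{q,k,\bb}$ over the various ranges of $\bb$ — in particular verifying in each case that the boundary exponents on $x_{q-k}$ and $y_k$ contribute nothing to the $x_i$- and $y_j$-exponents for the relevant $i,j$ — together with the degenerate cases $k=0$ and $k=q$, where some variable ranges are empty and $f_{q,k,\bb}$ is defined only under an extra sign condition on $\bb$; these are dealt with by the same computation or are vacuous.
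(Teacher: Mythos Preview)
Your proof is correct and follows essentially the same approach as the paper: reduce the membership $v\,f_{q,k,\bb}\in\R(\dd)$ to $v\,\tilde f_{q',k'}\in\R(\one)$ via Proposition~\ref{prop:containment} and the explicit computation of $\remd$, then invoke part (3) of Theorem~\ref{thm:tri-deg-vanishing} on the resulting bi-degree and index weighted degree. The only difference is organizational: the paper treats part (1) in full (separating $k=0$ from $k>0$) and then remarks that parts (2) and (3) follow similarly, whereas you package all three cases at once by introducing the parameter $(q',k')$ and noting that the asserted generators are exactly the $x_i$ with $i\le q'-k'-1$ and $y_j$ with $j\le k'-1$; this is a tidy repackaging but not a different argument.
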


\begin{proof}
We begin by proving part (1). First we handle the case when $k=0$. Fixing an integer $0\leq i\leq q-1$, we wish to show that $x_{i}f_{q,k,\bb}\in \R$. By Proposition~\ref{prop:containment} $x_{i}f_{q,k,\bb}\in \R(\dd)$ if and only if $\left(\frac{x_if_{q,k,\bb}}{\remd( x_if_{q,k,\bb})}\right)^{1/\dd}\in \R(\one)$. Using that $0\leq q-k+b_1< d_1$ and $0\leq k+b_2<d_2$ and performing a computation analogous to the one in Lemma~\ref{lem:floor-of-fqk} we find:
\[
\left(\frac{x_if_{q,0,\bb}}{\remd(x_if_{q,0,\bb})}\right)^{1/\dd}=\left(\frac{x_i\left(x_0\cdots x_{q-1}\right)^{d_1-1}x_{q}^{q+b_1}y_0^{b_2}\tilde{f}_{q,0}^{\dd}}{\left(x_0\cdots\widehat{x}_{i}\cdots x_{q-1}\right)^{d_1-1}x_{q}^{q+b_1}y_{0}^{b_2}}\right)^{1/\dd}=\left(x_i^{d_1}\tilde{f}_{q,0}^{\dd}\right)^{1/\dd}=x_i\tilde{f}_{q,0}.
\]
Now $x_i\tilde{f}_{q,0}$ has bi-degree $(1,q)$ and index weighted degree $i$, and so $i\leq q-1$. Theorem~\ref{thm:tri-deg-vanishing} implies that $x_i\tilde{f}_{q,0}\in \R(\one)$. 

Turing to the case when $k>0$ fix an integer $0\leq i\leq q-k-1$. We wish to show that $x_if_{q,k,\bb}\in \R(\dd)$. By Proposition~\ref{prop:containment} $x_{i}f_{q,k,\bb}\in \R(\dd)$ if and only if $\left(\frac{x_if_{q,k,\bb}}{\remd(x_if_{q,k,\bb})}\right)^{1/\dd}\in \R(\one)$. Using that $0\leq q-k+b_1< d_1$ and $0\leq k+b_2<d_2$ and performing a computation analogous to the one in Lemma~\ref{lem:floor-of-fqk} we find:
\begin{align*}
\left(\frac{x_if_{q,k,\bb}}{\remd(x_if_{q,k,\bb})}\right)^{1/\dd}=\left(\frac{x_i\left(x_0\cdots x_{q-k-1}\right)^{d_1-1}x_{q-k}^{q-k+b_1}\left(y_0\cdots y_{k-1}\right)^{d_2-1}y_{k}^{k+b_2}\tilde{f}_{q,k}^{\dd}}{\left(x_0\cdots\widehat{x}_{i}\cdots x_{q-k-1}\right)^{d_1-1}x_{q-k}^{q-k+b_1}\left(y_0\cdots y_{k-1}\right)^{d_2-1}y_{k}^{k+b_2}}\right)^{1/\dd}
=\left(x_i^{d_1}\tilde{f}_{q,k}^{\dd}\right)^{1/\dd}=x_i\tilde{f}_{q,k},
\end{align*}
and so it enough to show that $x_i\tilde{f}_{q,k}\in \R(\one)$. Computing we find that $\deg(x_i\tilde{f}_{q,k})=(k+1,q-k)$ and $\indeg(x_i\tilde{f}_{q,k}))=k(q-k)+i$. Finally, notice that since $0\leq i\leq q-k-1$ we have that:
\[
k(q-k)+i\leq k(q-k)+(q-k-1)=(k+1)(q-k)-1,
\]
and so by Theorem~\ref{thm:tri-deg-vanishing} $\dim \overline{S}_{(k+1,q-k),k(q-k)+i}=0$, which implies that $x_i\tilde{f}_{q,k}\in \R(\one)$.

The argument the $y_j$'s is similar. Fixing a natural number $0\leq j\leq k-1$, we wish to show that $y_jf_{q,k,\bb}\in \R(\dd)$. Again using Proposition~\ref{prop:containment} it is enough to show that $\left(\frac{y_jf_{q,k,\bb}}{\remd(y_jf_{q,k})}\right)^{1/\dd}\in \R(\one)$. A computation analogous to the one in the previous cases shows that: 
\begin{align*}
\left(\frac{y_jf_{q,k,\bb}}{\remd(y_jf_{q,k,\bb})}\right)^{1/\dd}=\left(\frac{y_j\left(x_0\cdots x_{q-k-1}\right)^{d_1-1}x_{q-k}^{q-k+b_1}\left(y_0\cdots y_{k-1}\right)^{d_2-1}y_{k}^{k+b_2}\tilde{f}_{q,k}^{\dd}}{\left(x_0\cdots x_{q-k-1}\right)^{d_1-1}x_{q-k}^{q-k+b_1}\left(y_0\cdots\widehat{y}_{j}\cdots y_{k-1}\right)^{d_2-1}y_{k}^{k+b_2}}\right)^{1/\dd}
=\left(y_j^{d_2}\tilde{f}_{q,k}^{\dd}\right)^{1/\dd}=y_j\tilde{f}_{q,k},
\end{align*}
and so it enough to show that $y_{j}\tilde{f}_{q,k}\in \R(\one)$. The bi-degree of this element is $(k,q-k+1)$ and its index weighted degree is $k(q-k)+)$. Finally, notice that since $0\leq j\leq k-1$ we have that:
\[
k(q-k)+j\leq k(q-k)+(k-1)=k(q-k+1)-1,
\]
and so by Theorem~\ref{thm:tri-deg-vanishing} $\dim \overline{S}_{(k,q-k+1)k(q-k)+j}=0$, which implies that $y_j\tilde{f}_{q,k}\in \R(\one)$. Parts (2) and (3) follow in a similar fashion.
\end{proof}

\section{The Key Case - $K_{p,q}\left(\P^{q-k}\times \P^{k},\bb;\dd\right)$}\label{sec:special-case}

In this section, we prove a special case of Theorem~\ref{thm:main}. Specifically, we fix $0\leq k\leq q$ and consider $K_{p,q}(\P^{q-k}\times \P^{k}, \bb;\dd)$. Following our heuristic that the non-vanishing of $K_{p,q}(\nn,\bb;\dd)$ is controlled by subvarieties of the form $\P^{i}\times\P^{j}$ where $i+j=q$, we see that the case of $\P^{q-k}\times\P^{k}$ may be simpler than the general case, since the only subvariety of this form is $\P^{q-k}\times\P^{k}$ itself. This special case is crucial to our proof of the full theorem. In particular, our proof of the general case uses a series of arguments to reduce Theorem~\ref{thm:main} to the following special case.

\begin{theorem}\label{thm:special}
Fix integers $0\leq k\leq q$, $\dd\in \Z_{>1}^2$, and $\bb\in \Z^2_{\geq0}$. If $0\leq q-k+b_1< d_1$ and $0\leq k+b_2<d_2$ and
\[
\frac{d_1}{d_2}b_2-b_1<q-k+1\quad \quad \text{and} \quad \quad \frac{d_2}{d_1}b_1-b_2<k+1
\]
 then $K_{p,q}((q-k,k),\bb;\dd)\neq0$ for $p = r_{(q-k,k),\dd}-(q+1)$
\end{theorem}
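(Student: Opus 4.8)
The plan is to realize the single syzygy in $K_{p,q}((q-k,k),\bb;\dd)$ for $p = r_{(q-k,k),\dd}-(q+1)$ by applying Corollary~\ref{cor:monomial-nonvanishing} to the monomial $f = f_{q,k,\bb} \in \overline{S}_{q\dd+\bb}$ constructed in Section~\ref{sec:fqk}, on the variety $\P^{\nn}$ with $\nn = (q-k,k)$ (so that $|\nn| = q$). First I would check that $f_{q,k,\bb}$ is defined and non-zero: the hypotheses $0\leq q-k+b_1 < d_1$ and $0\leq k+b_2 < d_2$ put us in case (1) of Lemma~\ref{lem-fqkb-nonzero} (since $b_1,b_2\geq 0$ we automatically have $q-k+b_1\geq 0$ and $k+b_2\geq 0$, and $d_1,d_2 > 1$ with the stated inequalities give $|q-k+b_1| < d_1$, $|k+b_2| < d_2$), so $f_{q,k,\bb}\neq 0$ in $\overline{S}$; and by the Remark after Lemma~\ref{lem-fqkb-well-defined} it has bi-degree $q\dd$, hence lies in $\overline{S}_{q\dd+\bb}$ as required. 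The two inequalities $\tfrac{d_1}{d_2}b_2 - b_1 < q-k+1$ and $\tfrac{d_2}{d_1}b_1 - b_2 < k+1$ are exactly the Cohen-Macaulayness hypotheses of Proposition~\ref{prop:cohen-macaulay}/Proposition~\ref{prop:regular-sequence} for $\nn = (q-k,k)$, so Corollary~\ref{cor:artinian-reduction} applies and $K_{p,q}((q-k,k),\bb;\dd)\cong K^{\overline{R}}_{p,q}(\overline{S}(\bb;\dd))$; thus it suffices to produce the syzygy over $\overline{R}$.

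The heart of the argument is to compute $\#L(f_{q,k,\bb})$ and $\#Z(f_{q,k,\bb})$ and verify $L(f_{q,k,\bb})\subseteq Z(f_{q,k,\bb})$, then check that the single value $p = r_{\nn,\dd}-(q+1)$ lies in the resulting window $\#L(f)\leq p\leq \#Z(f)$. For the lower end: $\dim_{\K}\overline{S}_{\dd} = \dim_{\K}\overline{R}_1$, and since $\overline{R} = R/\L$ with $\L$ generated by $q+1 = |\nn|+1$ linear forms that form a regular sequence, $\dim_{\K}\overline{R}_1 = \dim_{\K} R_1 - (q+1) = r_{\nn,\dd} - q$ (using $\dim_{\K} R_1 = r_{\nn,\dd}+1$). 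I expect $L(f_{q,k,\bb})$ to be almost all of $\overline{S}_{\dd}$ — precisely, by the $\remd$/index-degree analysis analogous to Lemma~\ref{lem:floor-of-fqk} and Theorem~\ref{thm:tri-deg-vanishing}, a monomial $m$ of bi-degree $\dd$ fails $\indeg m\leq \indeg f_{q,k,\bb}$ only for the single "top" monomial, so $\#L(f_{q,k,\bb}) = \dim_{\K}\overline{S}_{\dd} - 1 = r_{\nn,\dd}-q-1 = p$. For the upper end I would use Proposition~\ref{prop:linear-annhilators}(1): in case $\bb = \zero$ it gives $x_0,\dots,x_{q-k-1},y_0,\dots,y_{k-1}\in (0:_{\overline{S}}f_{q,k,\bb})$, i.e. $(q-k)+k = q$ of the variables annihilate $f$; multiplying these by a monomial of bi-degree $\dd-\ee$ (for the appropriate unit vector) produces many degree-$\dd$ annihilators, and one needs that $Z(f_{q,k,\bb})$ is large enough that $\#Z(f_{q,k,\bb})\geq p$. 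Since $p = \#L(f_{q,k,\bb})$, it actually suffices to show $L(f_{q,k,\bb})\subseteq Z(f_{q,k,\bb})$, which forces $\#Z\geq \#L = p$ and collapses the window to the single point $p$; then Corollary~\ref{cor:monomial-nonvanishing} gives $K^{\overline{R}}_{p,q}(\overline{S}(\bb;\dd))\neq 0$ at exactly $p = r_{\nn,\dd}-(q+1)$.

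So the crux is the containment $L(f_{q,k,\bb})\subseteq Z(f_{q,k,\bb})$: every monomial $m$ of bi-degree $\dd$ with $\indeg m \leq \indeg f_{q,k,\bb}$ must satisfy $m f_{q,k,\bb} = 0$ in $\overline{S}$. I would prove this by the same technique as Proposition~\ref{prop:linear-annhilators}: via Proposition~\ref{prop:containment}, reduce $m f_{q,k,\bb}\in\R(\dd)$ to $(m f_{q,k,\bb}/\remd(m f_{q,k,\bb}))^{1/\dd}\in\R(\one)$; using Lemma~\ref{lem:floor-of-fqk} this expression becomes $m^{1/\dd}\cdot$-type contributions combining with $\tilde f_{q,k}$, and the index-weighted degree bound $\indeg m\leq\indeg f_{q,k,\bb}$ translates (after dividing out $\remd$) into an index-degree inequality that, by Theorem~\ref{thm:tri-deg-vanishing}(3), forces the relevant graded piece $\overline{S}_{\aa,t}$ to vanish — hence $m f_{q,k,\bb}\in\R(\dd)$. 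The subtlety I expect to be the main obstacle is bookkeeping the $\remd$ operation when $m$ shares variables with the "padding" factors $(x_0\cdots x_{q-k-1})^{d_1-1}$, $(y_0\cdots y_{k-1})^{d_2-1}$, $x_{q-k}^{q-k+b_1}$, $y_k^{k+b_2}$ of $f_{q,k,\bb}$: one must carefully track which exponents wrap around modulo $d_1$ or $d_2$ and confirm the resulting bi-degree and index-degree land in the Theorem~\ref{thm:tri-deg-vanishing} vanishing range. Handling general $\bb\in\Z^2_{\geq 0}$ (not just $\bb=\zero$) while keeping these exponents in the correct residue ranges is where the hypotheses $q-k+b_1<d_1$, $k+b_2<d_2$ do their work, and verifying the inequality is tight enough to still land in the vanishing region of Theorem~\ref{thm:tri-deg-vanishing} is the step requiring the most care.
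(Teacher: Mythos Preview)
Your overall framework matches the paper's: reduce via Corollary~\ref{cor:artinian-reduction} using the Cohen--Macaulay hypotheses, then apply Corollary~\ref{cor:monomial-nonvanishing} to $f_{q,k,\bb}$, checking it is well-defined and nonzero via Lemmas~\ref{lem-fqkb-well-defined} and~\ref{lem-fqkb-nonzero}. Where you diverge is in the argument for $L(f_{q,k,\bb})\subset Z(f_{q,k,\bb})$, and here you are working much harder than necessary.

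The paper's key simplification is specific to $\nn=(q-k,k)$: by Proposition~\ref{prop:linear-annhilators}(1) the ideal $\langle x_0,\ldots,x_{q-k-1},y_0,\ldots,y_{k-1}\rangle$ annihilates $f_{q,k,\bb}$, and the \emph{only} monomial of bi-degree $\dd$ in $S$ not lying in this ideal is $x_{q-k}^{d_1}y_k^{d_2}$. But since $n_1=q-k$ and $n_2=k$, this monomial is exactly $g_{|\nn|}$, hence zero in $\overline{S}$. So \emph{every} nonzero monomial of $\overline{S}_{\dd}$ annihilates $f_{q,k,\bb}$, and the containment $L(f_{q,k,\bb})\subset Z(f_{q,k,\bb})$ is automatic. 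No $\remd$ bookkeeping, no case analysis on overlapping variables, no delicate index-degree inequalities are needed. The bound $\#Z(f_{q,k,\bb})\geq r_{(q-k,k),\dd}-(q+1)$ then comes from the Hilbert-function estimate (Lemma~\ref{lem:hilbert-function-bound}) together with $\HF(\dd,\R)=q+1$ (Lemma~\ref{lem:hilbert-function-I}).

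Your proposed route---reducing $m f_{q,k,\bb}\in\R(\dd)$ to a statement about $\R(\one)$ via Proposition~\ref{prop:containment} and then invoking Theorem~\ref{thm:tri-deg-vanishing}(3)---may be workable, but the exponent-wrapping cases you flag are genuinely messy, and your claim that exactly one monomial fails the index-degree bound (so that $\#L(f)=r_{\nn,\dd}-q-1$) is asserted without justification and is not obvious. The paper avoids computing $\#L(f)$ precisely altogether: once $L(f)\subset Z(f)$ holds trivially and $\#Z(f)\geq r_{(q-k,k),\dd}-(q+1)$, the value $p=r_{(q-k,k),\dd}-(q+1)$ lies in the window of Corollary~\ref{cor:monomial-nonvanishing}. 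The observation $x_{q-k}^{d_1}y_k^{d_2}=g_{|\nn|}$ is the one-line insight you are missing.
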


Before proving Theorem~\ref{thm:special} we need two lemmas regarding Hilbert functions. The first lemma shows that the Hilbert function of an ideal $J\subset S$ can be bounded below in terms of the number of the linearly independent forms of total degree one. 

\begin{lemma}\label{lem:hilbert-function-bound}
If $J\subset S$ is a homogeneous ideal and $K\subset  \left\langle J_{(1,0)},J_{(0,1)}\right\rangle$ then
\[
\HF(\dd,J)\geq r_{\nn,\dd}-\binom{d_1+n_1-\dim K_{(1,0)}}{n_1-\dim K_{(1,0)}}\binom{d_2-n_2-\dim K_{(0,1)}}{n_2-\dim K_{(0,1)}}
\]
\end{lemma}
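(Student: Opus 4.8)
\textbf{Proof proposal for Lemma~\ref{lem:hilbert-function-bound}.}

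The plan is to bound $\HF(\dd,J)$ from below by comparing $J$ with the ideal generated only by $K$, since $K \subset \langle J_{(1,0)}, J_{(0,1)}\rangle \subset J$. First I would reduce to the case $J = \langle K \rangle$: because $J \supseteq \langle K \rangle$ we have $\HF(\dd, J) \geq \HF(\dd, \langle K\rangle)$, so it suffices to prove the bound for the ideal generated by the linear forms in $K$. Write $a = \dim_\K K_{(1,0)}$ and $c = \dim_\K K_{(0,1)}$. Since $K_{(1,0)} \subset S_{(1,0)}$ is a subspace spanned by linear forms in the $x$-variables only, and $K_{(0,1)} \subset S_{(0,1)}$ is spanned by linear forms in the $y$-variables only, after a change of coordinates in $S' = \K[x_0,\dots,x_{n_1}]$ and in $S'' = \K[y_0,\dots,y_{n_2}]$ separately (which preserves the bigrading and the Hilbert function), we may assume $K_{(1,0)} = \langle x_0, \dots, x_{a-1}\rangle_\K$ and $K_{(0,1)} = \langle y_0, \dots, y_{c-1}\rangle_\K$.

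Next I would compute the Hilbert function of the quotient $S/\langle K\rangle$ directly. With the normalized coordinates, $\langle K \rangle = \langle x_0,\dots,x_{a-1}, y_0,\dots,y_{c-1}\rangle$, so
\[
S/\langle K\rangle \cong \K[x_a,\dots,x_{n_1}] \otimes_\K \K[y_c, \dots, y_{n_2}],
\]
which is a bigraded polynomial ring in $n_1 - a + 1$ of the $x$-variables and $n_2 - c + 1$ of the $y$-variables. Hence
\[
\HF(\dd, S/\langle K\rangle) = \binom{d_1 + n_1 - a}{n_1 - a}\binom{d_2 + n_2 - c}{n_2 - c}.
\]
Combining this with $\HF(\dd, S) = r_{\nn,\dd} + 1$ wait — more precisely $\dim_\K S_\dd = \binom{d_1+n_1}{n_1}\binom{d_2+n_2}{n_2} = r_{\nn,\dd}+1$, and $\HF(\dd, \langle K\rangle) = \dim_\K S_\dd - \HF(\dd, S/\langle K\rangle)$, gives
\[
\HF(\dd, \langle K\rangle) = r_{\nn,\dd} + 1 - \binom{d_1+n_1-a}{n_1-a}\binom{d_2+n_2-c}{n_2-c}.
\]
Finally $\HF(\dd, J) \geq \HF(\dd, \langle K\rangle)$ yields the claimed inequality (the stated bound is in fact weaker by one, so there is a little slack, consistent with how the lemma will be applied). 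I would double-check the binomial in the second factor against the statement, which reads $\binom{d_2 - n_2 - \dim K_{(0,1)}}{n_2 - \dim K_{(0,1)}}$; this looks like a typo for $\binom{d_2 + n_2 - \dim K_{(0,1)}}{n_2 - \dim K_{(0,1)}}$, and I would prove the corrected inequality.

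The only genuine subtlety — not really an obstacle — is justifying the separate coordinate changes on the $x$- and $y$-variables: one must observe that $K_{(1,0)}$ is automatically contained in the span of the $x$-variables and $K_{(0,1)}$ in the span of the $y$-variables (true since $S_{(1,0)} = S'_1 \otimes S''_0$ and $S_{(0,1)} = S'_0 \otimes S''_1$), so that $\langle K\rangle$ really is an ideal generated by a partial system of variables after the change of basis, and that such bigraded automorphisms of $S$ preserve $\HF(\dd,-)$. Everything else is a direct Hilbert-function computation for a polynomial ring.
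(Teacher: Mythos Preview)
Your proposal is correct and follows essentially the same approach as the paper: reduce to the ideal generated by $K$, identify $S/\langle K\rangle$ with a bigraded polynomial ring in $(n_1-a+1)+(n_2-c+1)$ variables, and read off its Hilbert function. The paper's own proof is in fact slightly less careful than yours---it tacitly assumes $K$ is generated by \emph{monomials} of total degree one (which is true in every application in the paper), whereas you correctly observe that a separate linear change of coordinates on the $x$- and $y$-variables reduces the general case to this one; you also correctly spot the typo $d_2-n_2$ for $d_2+n_2$ and the harmless off-by-one slack coming from $\HF(\dd,S)=r_{\nn,\dd}+1$.
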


\begin{proof}
Since $K$ is gender by monomials of total degree one, the quotient $S/K$ is exactly a bi-graded polynomial ring with $n_1-\dim K_{(1,0)}$ $x$-variables and $n_2-\dim K_{(0,1)}$ $y$-variables. It follows that $\HF(\dd,S/K)$ equals $\binom{d_1+n_1-\dim K_{(1,0)}}{n_1-\dim K_{(1,0)}}\binom{d_2-n_2-\dim K_{(0,1)}}{n_2-\dim K_{(0,1)}}$, and since $\HF(\dd,S)=\HF(\dd,K)+\HF(\dd,S/K)$, we see that $\HF(\dd,K)$ equals the right hand side of the displayed equation in the lemma. Now, since $K$ is a subset of $J$ we have that $\HF(\dd,K)\leq \HF(\dd,J)$ yielding the desired result. 
\end{proof}

The second lemma concerns the Hilbert function of $\R(\dd)$.

\begin{lemma}\label{lem:hilbert-function-I}
Fix integers $0\leq k\leq q$ and $\dd\in \Z_{>1}^2$ and let $n_1=q-k$ and $n_2=k$:
\[
\HF(\dd,\R(\dd))=(q-k)+k+1=q+1.
\]
\end{lemma}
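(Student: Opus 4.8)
The plan is to identify the graded piece $\R(\dd)_{\dd}$ completely and then count its dimension. Since $n_1+n_2=(q-k)+k=q$, the ideal $\R(\dd)=\R(\nn,\dd)$ is generated by the $q+1$ forms $g_0,g_1,\ldots,g_q$, and every one of these generators is bi-homogeneous of bi-degree exactly $\dd$. The first step is the general observation that for an ideal generated by elements all lying in one fixed degree, the graded piece in that degree is just their $\K$-span: writing an arbitrary element of $\R(\dd)$ as $\sum_{t}h_t g_t$ and extracting the bi-degree-$\dd$ component, only the degree-$(0,0)$ (i.e. scalar) part of each $h_t$ survives, so that $\R(\dd)_{\dd}=\Span_{\K}\{g_0,\ldots,g_q\}$. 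This immediately gives $\HF(\dd,\R(\dd))\le q+1$.

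The second step is to check that $g_0,\ldots,g_q$ are $\K$-linearly independent in $S_{\dd}$. Recall $g_t=\sum x_i^{d_1}y_j^{d_2}$, the sum running over $i+j=t$ with $0\le i\le q-k$ and $0\le j\le k$. I would note two things. First, for each $0\le t\le q$ this index set is nonempty --- one may take $i=\min\{t,q-k\}$ and $j=t-i$, which satisfies $0\le j\le k$ precisely because $t\le q=(q-k)+k$ --- so $g_t$ is a nonzero sum of pairwise distinct monomials of $S$. Second, a monomial $x_i^{d_1}y_j^{d_2}$ occurring in $g_t$ records $t=i+j$, so the monomial supports of $g_t$ and $g_{t'}$ are disjoint whenever $t\ne t'$. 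Hence a vanishing relation $\sum_{t}c_t g_t=0$ in $S_{\dd}$ forces every $c_t=0$, and therefore $\dim_{\K}\R(\dd)_{\dd}=q+1$.

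Combining the two bounds yields $\HF(\dd,\R(\dd))=q+1=(q-k)+k+1$, as claimed. There is no real obstacle in this argument; the only points demanding a little care are the bookkeeping in the first step (isolating the scalar coefficients via bi-homogeneity of the $g_t$, using $S_{(0,0)}=\K$) and verifying in the second step that the summation index set of every $g_t$ is nonempty for the particular choice $n_1=q-k$, $n_2=k$ --- including the boundary cases $k=0$ and $k=q$, where the $g_t$ degenerate to single monomials $x_t^{d_1}y_0^{d_2}$ (resp. $x_0^{d_1}y_t^{d_2}$) and the independence, hence the claim, is even more transparent.
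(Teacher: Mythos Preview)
Your proof is correct and follows essentially the same strategy as the paper: both reduce to showing that $g_0,\ldots,g_q$ are $\K$-linearly independent in $S_{\dd}$, after observing that $\R(\dd)_{\dd}$ is just their span. The only difference is cosmetic --- the paper establishes linear independence by invoking the index weighted grading (Lemma~\ref{lem:homogeneous-wrt-idex}), under which $g_t$ is homogeneous of degree $d_1d_2t$, whereas you argue it directly by noting that the monomial supports of the $g_t$ are pairwise disjoint; these are the same observation, since the index weighted degree of $x_i^{d_1}y_j^{d_2}$ is precisely $d_1d_2(i+j)$.
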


\begin{proof}
Since $\R(\dd)$ is generated in bi-degree $\dd$ by $g_{0},g_{1},\ldots,g_{\nn}$ it is enough to show that these $g_{\ell}$'s are linearly independent over $\K$. This follow from the fact that the index weighted degree induces a $\Z$-grading on $S$, and that index weighted degree of each $g_{t}$ is distinct, Lemma~\ref{lem:homogeneous-wrt-idex}.
\end{proof}

\begin{proof}[Proof of Theorem~\ref{thm:special}]
As we are considering the case of $\P^{q-k}\times\P^{k}$, i.e. when $\nn=(q-k,k)$, thoughtout this proof we let $S=\K[x_0,x_1,\ldots,x_{q-k},y_0,y_1,\ldots,y_{k}]$. By Proposition~\ref{prop:cohen-macaulay} the inequalities on $\bb$ and $\dd$ in the hypothesis of the theorem ensure that $S(\bb;\dd)$ is Cohen-Macaulay as an $R$-module. Hence by the Artinian reduction explained in Corollary~\ref{cor:artinian-reduction} we know that there exists a natural isomorphism between $K_{p,q}((q-k,k),\bb;\dd)$ and $K_{p,q}^{\overline{R}}\left(\overline{S}(\bb;\dd)\right)$. So it is enough to prove that $K_{p,q}^{\overline{R}}\left(\overline{S}(\bb;\dd)\right)$ is non-zero for $p = r_{(q-k,k),\dd}-(q+1)$. 

We do this by applying Corollary~\ref{cor:monomial-nonvanishing} to the monomial $f_{q,k,\bb}$ described in Definition~\ref{defn:fqkb}. However, before we do this we check that $f_{q,k,\bb}$ satisfies the conditions need for Corollary~\ref{cor:monomial-nonvanishing}. In particular, we check that $f_{q,k,\bb}$ is a well-defined monomial, which is non-zero in $\overline{S}_{q\dd+\bb}$. The fact that $f_{q,k,\bb}$ is a well-defined monomial follows from the fact that both coordinates of $\bb=(b_1,b_2)$ are non-negative (see Lemma~\ref{lem-fqkb-well-defined}). Moreover, since $0\leq q-k+b_1<d_1$ and $0\leq k+b_2<d_2$ we know that $f_{q,k,\bb}$ is non-zero as an element of $\overline{S}$ (see Lemma~\ref{lem-fqkb-nonzero}). 

Thus, by the monomial methods from Corollary~\ref{cor:monomial-nonvanishing} if $L(f_{q,k,\bb})\subset Z(f_{q,k})$ then $K_{p,q}^{\overline{R}}\left(\overline{S}(\bb;\dd)\right)\neq0$ for all $p$ in  
\[
\#L(f_{q,k,\bb})\leq p \leq \#Z(f_{q,k,\bb}).
\] In particular, using the trivial upper bound that $\#L(f_{q,k,\bb})\leq \#Z(f_{q,k,\bb})$ gives non-vanishing for $p=\#Z(f_{q,k})$. Thus, it is enough to i) show that $L(f_{q,k,\bb})\subset Z(f_{q,k,\bb})$ and ii) give a lower bound on $\#Z(f_{q,k,\bb})$ that is also an upper bound on $\#L(f_{q,k,\bb})$. 

Towards part (i) recall that 
\[
Z(f_{q,k,\bb})= \left\{\begin{matrix} m \\ \text{a monomial of}\\ \text{bi-degree $\dd$}\end{matrix} \; \bigg| \; mf_{q,k,\bb}=0\right\}\subset \overline{S}_{\dd},
\] 
and so $Z(f_{q,k,\bb})$ is equal to $(0:_{\overline{S}}f_{q,k,\bb})_{\dd}\subset \overline{S}$. By Proposition~\ref{prop:linear-annhilators} the ideal $\langle x_{0},x_{1},\ldots,x_{q-k-1},y_0,y_{1},\ldots,y_{k-1}\rangle$ is contained in $(\R(\dd):_{S}f_{q,k,\bb})$, and so the degree $\dd$ part of $\langle x_{0},x_{1},\ldots,x_{q-k-1},y_0,y_{1}\ldots,y_{k-1}\rangle\overline{S}$ is contained in $Z(f_{q,k,\bb})=(0:_{\overline{S}}f_{q,k,\bb})_{\dd}$. Thus, it is enough to show:
\[
L(f_{q,k,\bb})= \left\{\begin{matrix} m \\ \text{a monomial of}\\ \text{bi-degree $\dd$}\end{matrix} \; \bigg| \; \begin{matrix}\indeg m\leq \indeg f_{q,k,\bb}\end{matrix}\right\}\subset \left(\langle {x}_{0},x_{1},\ldots,{x}_{q-k-1},{y}_0,y_{1},\ldots,{y}_{k-1}\rangle\overline{S}\right)_{\dd}.
\]
Since $n_1=q-k$ and $n_2=k$ the only monomial of bi-degree $\dd$ not contained in $\langle {x}_{0},x_{1},\ldots,{x}_{q-k-1},{y}_0,y_{1},\ldots,{y}_{k-1}\rangle\overline{S}$ is ${x}_{q-k}^{d_1}{y}_{k}^{d_2}$. However, since $n_1=q-k$ and $n_2=k$ we know that ${x}_{q-k}^{d_1}{y}_{k}^{d_2}=g_{|\nn|}$, and so ${x}_{q-k}^{d_1}{y}_{k}^{d_2}=0$ as an element of $\overline{S}$. This gives the following containments:
\[
L(f_{q,k,\bb})\subset \langle {x}_{0},x_{1},\ldots,{x}_{q-k-1},{y}_0,y_{1},\ldots,{y}_{k-1}\rangle\overline{S}\subset Z(f_{q,k,\bb}).
\]

Shifting our focus to step (ii), and giving a lower bound for $\#Z(f_{q,k,\bb})$, note that:
\[
\#Z(f_{q,k,\bb})=\HF(\dd,(0:_{\overline{S}}f_{q,k,\bb}))=\HF(\dd,(\R:_{S}f_{q,k,\bb}))-\HF(\dd,\R).
\]
Utilizing the fact that $\langle x_{0},x_{1},\ldots,x_{q-k-1},y_0,y_{1},\ldots,y_{k-1}\rangle$ is contained in $(\R:_{S}f_{q,k,\bb})$, Proposition~\ref{prop:linear-annhilators}, together with Lemmas ~\ref{lem:hilbert-function-bound} and ~\ref{lem:hilbert-function-I} we get the desired result
\begin{align*}
\#Z(f_{q,k,\bb})=\HF(\dd,(\R :_{S}f_{q,k,\bb}))-\HF(\dd,\R)\geq r_{q-k,k\dd}+1-\binom{d_1}{0}\binom{d_2}{0}-\HF(\dd,\R)=r_{(q-k,k),\dd}-(q+1).
\end{align*}

\end{proof}
\section{Proof of Main Theorems}\label{sec:proof-of-theorem}

We are now ready to prove our main results: Theorem~\ref{thm:main}, Corollary~\ref{cor:main}, and Theorem~\ref{thm:main2}. By combining part (1) of Proposition~\ref{prop:nonvanishing-bounds} and Proposition~\ref{prop:linear-annhilators} we can now easily construct Koszul co-cycles of the form $m_1\wedge \cdots \wedge m_p\otimes f_{q,k,\bb}$. However, checking such a co-cycle is not a co-boundary is relatively difficult. Our key insight is that the issue of showing $m_1\wedge \cdots \wedge m_p\otimes f_{q,k,\bb}$ is not a co-boundary can, in a sense, be reduced to the special case considered in the Section~\ref{sec:special-case}. 

More precisely if we fix $0\leq q \leq |\nn|$ and $0\leq k \leq q$ so that $q-k\leq n_1$ and $k\leq n_2$ then the quotient map
\[
\begin{tikzcd}[column sep = 4em]
\overline{S}\rar{\pi}& \frac{\overline{S}}{\langle x_{q-k+1},x_{q-k+2},\ldots,x_{n_1},y_{k+1},y_{k+2},\ldots,y_{n_2}\rangle}=\overline{S}'
\end{tikzcd},
\]
induces a map between Koszul compexes
\begin{center}
\begin{tikzcd}[column sep = 3.5em, row sep = 3.5em]
\cdots \rar{}& \Alt^{p+1}\overline{S}_{\dd}\otimes \overline{S}_{(q-1)\dd+\bb}\dar[two heads]{\pi}\rar{\overline{\partial}_{p+1}}&\Alt^{p}\overline{S}_{\dd}\otimes \overline{S}_{q\dd+\bb}\dar[two heads]{\pi}\rar{\overline{\partial}_p}&\Alt^{p-1}\overline{S}_{\dd}\otimes \overline{S}_{(q+1)\dd+\bb}\dar[two heads]{\pi}\rar{}&\cdots \\
\cdots \rar{}& \Alt^{p+1}\overline{S}'_{\dd}\otimes \overline{S}'_{(q-1)\dd+\bb}\rar{\overline{\partial}'_{p+1}}&\Alt^{p}\overline{S}'_{\dd}\otimes \overline{S}'_{q\dd+\bb}\rar{\overline{\partial}'_p}&\Alt^{p-1}\overline{S}'_{\dd}\otimes \overline{S}'_{(q+1)\dd+\bb}\rar{}&\cdots 
\end{tikzcd}.
\end{center}
Checking directly in coordinates one sees that this induced map is in fact a map of chain complexes
\begin{align*}
\pi\left(\overline{\partial}_p(m_1\wedge \cdots\wedge m_{p}\otimes f)\right)&=\pi\left(\sum_{i=1}^p (-1)^{i}m_1\wedge \cdots \wedge\hat{m_i}\wedge \cdots\wedge m_{p}\otimes m_if\right)\\
&=\sum_{i=1}^p (-1)^{i}\pi(m_1)\wedge \cdots \wedge\hat{m_i}\wedge \cdots\wedge \pi(m_{p})\otimes \pi(m_if)\\
&=\sum_{i=1}^{p}(-1)^i \pi(m_1)\wedge \cdots \hat{\pi(m_i)}\wedge \cdots \wedge \pi(m_{p})\otimes \pi(m_i)\pi(f)) \\
&= \overline{\partial}'_p \left(\pi(m_1)\wedge \cdots\wedge \pi(m_{p})\otimes \pi(f)\right)= \overline{\partial}'_p\left(\pi(m_1\wedge \cdots\wedge m_{p}\otimes f)\right).
\end{align*}

Chasing this diagram of Koszul complexes shows that the condition of an element $\zeta\in \Alt^{p}\overline{S}_{\dd}\otimes \overline{S}_{q\dd+\bb}$ not being a co-boundary is implied by $\pi(\zeta)$ not being a co-boundary. 

\begin{lemma}\label{lem:technical-1}
Fix $\zeta\in \Alt^{p}\overline{S}_{\dd}\otimes \overline{S}_{q\dd}$. If $\pi(\zeta)\not\in \img\left(\overline{\partial}'_{p+1}\right)$ then $\zeta\not\in \img\left(\overline{\partial}_{p+1}\right)$.
\end{lemma}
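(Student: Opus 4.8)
The plan is to argue by contrapositive, using the compatibility of $\pi$ with the Koszul differentials that is displayed immediately before the statement. Suppose $\zeta \in \img(\overline{\partial}_{p+1})$, say $\zeta = \overline{\partial}_{p+1}(\eta)$ for some $\eta \in \Alt^{p+1}\overline{S}_{\dd}\otimes \overline{S}_{(q-1)\dd}$. Applying $\pi$ and using the chain-map identity $\pi \circ \overline{\partial}_{p+1} = \overline{\partial}'_{p+1}\circ \pi$ established above, we get $\pi(\zeta) = \pi(\overline{\partial}_{p+1}(\eta)) = \overline{\partial}'_{p+1}(\pi(\eta))$, so $\pi(\zeta) \in \img(\overline{\partial}'_{p+1})$. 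This is exactly the contrapositive of the claim, so we are done.

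The only genuine content here is making sure the chain-map identity applies to a general element, not just to a single monomial tensor. The displayed computation before the lemma checks $\pi \circ \overline{\partial}_p = \overline{\partial}'_p \circ \pi$ on an element of the form $m_1 \wedge \cdots \wedge m_p \otimes f$; since both $\overline{\partial}_{p+1}$, $\overline{\partial}'_{p+1}$ and $\pi$ are $\K$-linear and such elements span $\Alt^{p+1}\overline{S}_{\dd}\otimes \overline{S}_{(q-1)\dd}$, the identity holds on all of $\Alt^{p+1}\overline{S}_{\dd}\otimes \overline{S}_{(q-1)\dd}$. (One should also note that $\pi$, being induced by the quotient $\overline{S} \twoheadrightarrow \overline{S}'$, respects the bigrading and index-weighted grading, so it does indeed restrict to maps on the relevant graded pieces $\overline{S}_{\dd}$ and $\overline{S}_{q\dd+\bb}$; this is implicit in the setup.)

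I do not expect any real obstacle: this is a one-line diagram chase. If anything, the subtlety worth a sentence is that the lemma is stated with $\bb = \zero$ (writing $\overline{S}_{q\dd}$) while the diagram above allows general $\bb$; the argument is identical in either case, so I would simply phrase the proof for the $\bb$ that appears in the statement. Here is the writeup:

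\begin{proof}
We prove the contrapositive. Suppose $\zeta \in \img\left(\overline{\partial}_{p+1}\right)$, and write $\zeta = \overline{\partial}_{p+1}(\eta)$ for some $\eta \in \Alt^{p+1}\overline{S}_{\dd}\otimes \overline{S}_{(q-1)\dd}$. By the linearity of $\pi$, $\overline{\partial}_{p+1}$, and $\overline{\partial}'_{p+1}$, the computation displayed above (verified there on monomials $m_1\wedge\cdots\wedge m_{p+1}\otimes f$, which span $\Alt^{p+1}\overline{S}_{\dd}\otimes \overline{S}_{(q-1)\dd}$) shows that $\pi\circ\overline{\partial}_{p+1}=\overline{\partial}'_{p+1}\circ\pi$. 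Hence
\[
\pi(\zeta)=\pi\left(\overline{\partial}_{p+1}(\eta)\right)=\overline{\partial}'_{p+1}\left(\pi(\eta)\right)\in \img\left(\overline{\partial}'_{p+1}\right).
\]
Thus if $\pi(\zeta)\notin \img\left(\overline{\partial}'_{p+1}\right)$, then $\zeta\notin \img\left(\overline{\partial}_{p+1}\right)$.
\end{proof}
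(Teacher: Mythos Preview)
Your proof is correct and follows essentially the same approach as the paper: both argue (by contrapositive or, equivalently, by contradiction) that if $\zeta=\overline{\partial}_{p+1}(\eta)$ then the chain-map identity $\pi\circ\overline{\partial}_{p+1}=\overline{\partial}'_{p+1}\circ\pi$ forces $\pi(\zeta)\in\img(\overline{\partial}'_{p+1})$. Your added remark that linearity extends the identity from monomial tensors to all elements is a small elaboration the paper leaves implicit, but the argument is otherwise identical.
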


\begin{proof}
Towards a contradiction suppose there exists $\alpha\in  \Alt^{p+1}\overline{S}_{\dd}\otimes \overline{S}_{(q-1)\dd+\bb}$ such that $\overline{\partial}_{p+1}(\alpha)=\zeta$. Now since $\pi$ induces a chain map of Koszul complexes
\[
\overline{\partial}'_{p+1}\left(\pi(\alpha)\right)=\pi\left(\overline{\partial}_{p+1}(\alpha)\right)=\pi(\zeta)
\]
contradicting the fact that $\pi(\zeta)\not\in \img\left(\overline{\partial}'_{p+1}\right)$.
\end{proof}

\begin{lemma}\label{lem:technical-2}
Fix $\zeta\in \Alt^{p}\overline{S}_{\dd}\otimes \overline{S}_{q\dd+\bb}$ and $m\in \overline{S}_{\dd}$. If $\zeta \not\in \img \overline{\partial}_{p+1,q}$ then $m\wedge \zeta\not\in \img \overline{\partial}_{p+2,q}$
\end{lemma}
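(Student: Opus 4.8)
The plan is to use the interior-product (contraction) operators on $\Alt^{\bullet}\overline{S}_{\dd}$ and the fact that they are anti-chain maps for the Koszul differential. Write $V\coloneqq\overline{S}_{\dd}$, and for a functional $\phi\in V^{*}$ let $\iota_{\phi}\colon\Alt^{s}V\to\Alt^{s-1}V$ be the usual contraction, extended to $\iota_{\phi}\colon\Alt^{s}V\otimes\overline{S}_{r\dd+\bb}\to\Alt^{s-1}V\otimes\overline{S}_{r\dd+\bb}$ by acting only on the exterior factor (so $\iota_{\phi}$ lowers homological degree by one and preserves the second, ``Koszul'', index). Two elementary facts will drive the argument. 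First, the Cartan-type identity $\iota_{\phi}(m\wedge\eta)=\phi(m)\,\eta-m\wedge\iota_{\phi}(\eta)$ for $m\in V$; equivalently $\iota_{\phi}\circ L_{m}+L_{m}\circ\iota_{\phi}=\phi(m)\cdot\id$, where $L_{m}$ denotes wedging with $m$. Second, a direct computation on the monomial generators $v_{1}\wedge\cdots\wedge v_{p}\otimes f$ shows $\overline{\partial}\circ\iota_{\phi}+\iota_{\phi}\circ\overline{\partial}=0$ (the same sort of coordinate check that shows $\pi$ is a chain map before Lemma~\ref{lem:technical-1}); in particular $\iota_{\phi}$ carries $\img\overline{\partial}_{p+2,q}$ into $\img\overline{\partial}_{p+1,q}$.

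It is cleanest to run the argument for the chains actually occurring in the applications, namely $\zeta=\overline{m}_{1}\wedge\cdots\wedge\overline{m}_{p}\otimes f$, a single monomial of $\Alt^{p}\overline{S}_{\dd}\otimes\overline{S}_{q\dd+\bb}$; we may also assume $m\wedge\zeta\neq0$, since otherwise there is nothing to prove. Since $\zeta\notin\img\overline{\partial}_{p+1,q}$ we have $f\neq0$, so $m\wedge\zeta\neq0$ forces $m,\overline{m}_{1},\dots,\overline{m}_{p}$ to be linearly independent in $V$; hence there is $\phi\in V^{*}$ with $\phi(m)=1$ and $\phi(\overline{m}_{i})=0$ for all $i$, and for this $\phi$ the contraction formula gives $\iota_{\phi}(\zeta)=0$. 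Now suppose toward a contradiction that $m\wedge\zeta=\overline{\partial}_{p+2,q}(\alpha)$ for some $\alpha\in\Alt^{p+2}\overline{S}_{\dd}\otimes\overline{S}_{(q-1)\dd+\bb}$. Applying $\iota_{\phi}$ and using the Cartan identity on the left together with the anticommutation on the right,
\[
\zeta=\phi(m)\,\zeta-m\wedge\iota_{\phi}(\zeta)=\iota_{\phi}\bigl(m\wedge\zeta\bigr)=\iota_{\phi}\bigl(\overline{\partial}_{p+2,q}(\alpha)\bigr)=-\,\overline{\partial}_{p+1,q}\bigl(\iota_{\phi}(\alpha)\bigr)\in\img\overline{\partial}_{p+1,q},
\]
contradicting the hypothesis. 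Therefore $m\wedge\zeta\notin\img\overline{\partial}_{p+2,q}$.

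The only genuinely delicate point is verifying the sign in $\overline{\partial}\circ\iota_{\phi}+\iota_{\phi}\circ\overline{\partial}=0$, which is a routine bookkeeping of Koszul signs; and in fact only the inclusion $\iota_{\phi}(\img\overline{\partial})\subseteq\img\overline{\partial}$ is needed, so the precise sign is immaterial. I do not expect a conceptual obstacle beyond noting that the lemma is meant to be applied to monomial Koszul chains $\zeta$ with $m\wedge\zeta\neq0$ — exactly the chains produced in Corollary~\ref{cor:monomial-nonvanishing} and tracked through the reduction of Theorem~\ref{thm:main} to Theorem~\ref{thm:special} — which is precisely what guarantees the existence of the auxiliary functional $\phi$ with $\phi(m)=1$ and $\iota_{\phi}(\zeta)=0$; when $m\wedge\zeta=0$ the asserted conclusion fails, so this mild restriction is unavoidable.
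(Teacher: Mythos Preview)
Your argument is correct and, under the hood, does the same thing as the paper's proof, but packaged more cleanly. The paper proves the contrapositive by hand: it picks a complement $W$ of $\K m$ in $\overline{S}_{\dd}$, writes $\alpha=m\wedge\xi+\omega$ with $\xi\in\Alt^{p+1}W\otimes\overline{S}_{(q-1)\dd+\bb}$ and $\omega\in\Alt^{p+2}W\otimes\overline{S}_{(q-1)\dd+\bb}$, expands $\overline{\partial}_{p+2,q}(\alpha)$, observes that the $\Alt^{p+1}W$-component must vanish, and reads off $\zeta=-\overline{\partial}_{p+1,q}(\xi)$ from the $m\wedge(-)$-component. Your contraction $\iota_{\phi}$ with $\phi(m)=1$ and $\phi|_{W}=0$ is exactly the linear operator ``strip off the leading $m$'' that the paper applies by hand; the identity $\iota_{\phi}\overline{\partial}+\overline{\partial}\iota_{\phi}=0$ is the structural reason their component-matching succeeds, and once it is in place your one-line displayed computation replaces the paper's explicit expansion. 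The trade-off is that you must verify the anticommutation identity (routine, as you note), whereas the paper avoids any auxiliary lemma at the cost of a longer coordinate calculation.

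You are also right to isolate the implicit hypothesis. The paper's final step passes from $m\wedge\zeta=-m\wedge\overline{\partial}_{p+1,q}(\xi)$ to $\zeta=-\overline{\partial}_{p+1,q}(\xi)$, which needs $\zeta\in\Alt^{p}W\otimes\overline{S}_{q\dd+\bb}$ so that $m\wedge(-)$ is injective; this is the same condition that lets you choose $\phi$ with $\iota_{\phi}(\zeta)=0$. Both proofs therefore cover precisely decomposable $\zeta$ with $m\wedge\zeta\neq0$, and your closing remark that the lemma is false when $m\wedge\zeta=0$---but that only this case is used in Section~\ref{sec:proof-of-theorem}---is a useful clarification the paper leaves tacit.
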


\begin{proof}
We prove the contrapositive that if $m\wedge \zeta\in \img \overline{\partial}_{p+2,q}$ then $\zeta\in \img \overline{\partial}_{p+1,q}$. Towards this let $\zeta = \zeta_1\wedge \cdots \wedge \zeta_p\otimes f$, and assume that $\overline{\partial}_{p+2,q}(\alpha)=m\wedge \zeta$. Now we may write $\alpha$ as
\[
\alpha=m\wedge \left(\sum_{j} \xi_j\otimes g_j\right) + \sum_{i} \omega_i\otimes h_i
\]
where
\[
\xi_j\otimes g_j\in \Alt^{p}\Span_{\K}\left(\overline{S}_{\dd}-\{m\}\right)\otimes \overline{S}_{q\dd+\bb} \quad \quad \text{and} \quad \quad \omega_i\otimes h_i \in \Alt^{p+1}\Span_{\K}\left(\overline{S}_{\dd}-\{m\}\right)\otimes \overline{S}_{q\dd+\bb}.
\]
Computing we see that:
\begin{align}\label{eqn:reduction-2}
m\wedge \zeta = \overline{\partial}_{p+2,q}(\alpha) &= \overline{\partial}_{p+2,q}\left(m\wedge \left(\sum_{j} \xi_j\otimes g_j\right) + \sum_{i} \omega_i\otimes h_i\right)=\overline{\partial}_{p+2,q}\left(m\wedge \left(\sum_{j} \xi_j\otimes g_j\right)\right) + \overline{\partial}_{p+2,q}\left(\sum_{i} \omega_i\otimes h_i\right)\nonumber\\
&= \underbrace{-m\wedge \left(\overline{\partial}_{p+1,q}\left(\sum_{j} \xi_j\otimes g_j\right)\right)}_{\text{I}}\;\;+\;\;\underbrace{\sum_{j}-\xi_j \otimes mg_j+\overline{\partial}_{p+2,q}\left(\sum_{i} \omega_i\otimes h_i\right)}_{\text{II}}.
\end{align}
Now Part II of the above equation is entirely contained in the vector subspace $\Alt^{p}\Span_{\K}\left(\overline{S}_{\dd}-\{m\}\right)\otimes \overline{S}_{(q+1)\dd+\bb}$, and hence must equal zero. So Equation~\eqref{eqn:reduction-2} simplifies to:
\[
m\wedge \zeta =  -m\wedge \left(\overline{\partial}_{p+1,q}\left(\sum_{j} \xi_j\otimes g_j\right)\right).
\]
In particular, we see that $\overline{\partial}_{p+1,q}\left(-\sum_{j} \xi_j\otimes g_j\right)=\zeta$, and so as claimed $\zeta \in \img \overline{\partial}_{p+1,q}$.
\end{proof}

\begin{proof}[Proof of Theorem~\ref{thm:main2}]
By Proposition~\ref{prop:cohen-macaulay} the inequalities on $\bb$ and $\dd$ in the hypothesis of the theorem ensure that $S(\bb;\dd)$ is Cohen-Macaulay as an $R$-module. In particular, Proposition~\ref{prop:regular-sequence} implies that $\ell_{0},\ell_{1},\ldots,\ell_{|\nn|}$ is a linear regular sequence on $S(\bb;\dd)$, and so the Artinian reduction argument described in Corollary~\ref{cor:artinian-reduction} shows that quotienting by $\langle \ell_0,\ell_1,\ldots,\ell_{|\nn|}\rangle$ induces an isomorphism between $K_{p,q}(\nn,\bb;\dd)$ and $K_{p,q}^{\overline{R}}\left(\overline{S}(\bb;\dd)\right)$. 

Thus, it is enough to prove the desired non-vanishing for $K_{p,q}^{\overline{R}}\left(\overline{S}(\bb;\dd)\right)$. We do this by first using the special non-trivial syzygy on $\P^{q-k}\times\P^{k}$ constructed in Theorem~\ref{thm:special} together with the lifting argument in Lemma~\ref{lem:technical-1} to construct a single non-trivial syzygy on $\P^{\nn}$. We then construct other non-zero syzygies from this initial non-zero syzygy by Lemma~\ref{lem:technical-2}.

Set $\delta = r_{(q-k,k),\dd}-(q+1)$, and choose $\delta$ degree $\dd$ non-zero monomials $m_1,m_2,\ldots,m_{\delta}$ contained in the ideal  $\langle x_0,x_1,\ldots,x_{q-k},y_0,y_1,\ldots,y_k\rangle\overline{S} \cap \K[x_0,x_1\ldots,x_{q-k},y_0,y_1,\ldots,y_k]$. We wish to show that $\zeta = m_1\wedge \cdots \wedge m_{\delta} \otimes f_{q,k,\bb}$ represents a non-zero class in $K_{p,\delta}^{\overline{R}}\left(\overline{S}(\bb;\dd)\right)$. That is $\zeta$ is a represents a non-zero class in the cohomology of the following chain complex
\begin{equation}\label{eq:pf-chain}
\begin{tikzcd}[column sep = 3.5em, row sep = 3.5em]
\cdots \rar{}& \Alt^{\delta+1}\overline{S}_{\dd}\otimes \overline{S}_{(q-1)\dd+\bb}\rar{\overline{\partial}_{\delta+1}}&\Alt^{\delta}\overline{S}_{\dd}\otimes \overline{S}_{q\dd+\bb}\rar{\overline{\partial}'_\delta}&\Alt^{\delta-1}\overline{S}_{\dd}\otimes \overline{S}_{(q+1)\dd+\bb}\rar{}&\cdots. 
\end{tikzcd}
\end{equation}

Towards this we first show that $\zeta$ is well-defined and non-zero, which amounts to checking the same for $f_{q,k\bb}$. Since $b_1\geq0$ and $b_2\geq0$ we know by Lemma~\ref{lem-fqkb-well-defined} that $f_{q,k,\bb}$ is a well-defined monomial in $\overline{S}_{q\dd+\bb}$. Moreover, since $0\leq q-k+b_1<d_1$ and $0\leq k+b_2<d_2$ we know that $f_{q,k,\bb}$ is non-zero as an element of $\overline{S}$ (see Lemma~\ref{lem-fqkb-nonzero}).

Having showed that $\zeta$ is well-defined we turn to proving that $\zeta$ is not in the image $\overline{\partial}_{\delta+1}$. We do this by considering $\pi(\zeta)\in\Alt^{\delta}\overline{S}'_{\dd}\otimes \overline{S}'_{q\dd+\bb}$ where $\pi$ is as defined in the beginning of this section. Using the inductive structure described in Lemma~\ref{lem:induction-ideal} we know that $\overline{S}'$ is exactly $\overline{S}$ in the case when $n_1=q-k$ and $n_2=k$. Thus, the Koszul complex 
\begin{equation}\label{eq:reduction}
\begin{tikzcd}[column sep = 3.5em, row sep = 3.5em]
\cdots \rar{}& \Alt^{\delta+1}\overline{S}'_{\dd}\otimes \overline{S}'_{(q-1)\dd+\bb}\rar{\overline{\partial}'_{\delta+1}}&\Alt^{\delta}\overline{S}'_{\dd}\otimes \overline{S}'_{q\dd+\bb}\rar{\overline{\partial}'_\delta}&\Alt^{\delta-1}\overline{S}'_{\dd}\otimes \overline{S}'_{(q+1)\dd+\bb}\rar{}&\cdots 
\end{tikzcd}
\end{equation}
actually computes $K_{\delta,q}((q-k,k),\bb;\dd)$. Moreover, by construction one sees that $\pi(\zeta)$ represents one of the non-trivial syzygies constructed in Theorem~\ref{thm:special}. In particular, $\pi(\zeta)$ represents a non-zero element in the cohomology of complex~\eqref{eq:reduction} above. This means that
$\pi(\zeta)$ is not in the image of $\overline{\partial}'_{\delta+1}$. 

Now by Lemma~\ref{lem:technical-1} the fact that $\pi(\zeta)$ is not in the image of $\overline{\partial}'_{\delta+1}$ implies that $\zeta\in \Alt^{\delta}\overline{S}_{\dd}\otimes \overline{S}_{q\dd+\bb}$ is not in the image of $\overline{\partial}_{\delta+1}$. Thus, to show that $\zeta$ is a non-trivial syzygy on $\P^{\nn}$, i.e. a non-zero element of the cohomology of complex \eqref{eq:pf-chain} above, we must show that $\zeta$ is in the kernel of $\overline{\partial}_{\delta}$. Using our description of the annihilators of $f_{q,k,\bb}$ given in Proposition~\ref{prop:linear-annhilators} we know that $m_1,m_2,\ldots,m_{\delta}$ annihilate $f_{q,k,\bb}$. So by part (1) of Proposition~\ref{prop:nonvanishing-bounds} implies that $\overline{\partial}_{\delta}(\zeta)=0$. Hence $\zeta$ represents a non-trivial class in $K_{p,\delta}^{\overline{R}}\left(\overline{S}(\bb;\dd)\right)$.  

We now use Lemma~\ref{lem:technical-2} to construct other non-trivial syzygies from $\zeta$. In particular, by inductively applying Lemma~\ref{lem:technical-2} we know that if $(n_1\wedge\cdots\wedge n_t)\wedge \zeta$ is non-zero then $(n_1\wedge\cdots\wedge n_t)\wedge \zeta$ is not in the image of $\overline{\partial}_{\delta+t+1,q}$. Thus, as long as $(n_1\wedge\cdots\wedge n_t)\wedge \zeta$ remains non-zero and in the kernel of $\overline{\partial}_{\delta+t,q}$ it will represent a non-trivial class in $K_{\delta+t,q}^{\overline{R}}(\overline{S}(\bb;\dd))$. 

Using the description of the annihilators of $f_{q,k,\bb}$ given in Proposition~\ref{prop:linear-annhilators} together with part (1) of Proposition~\ref{prop:nonvanishing-bounds} we know that $(n_1\wedge\cdots\wedge n_t)\wedge \zeta$ will be in the kernel of $\overline{\partial}_{\delta+t,q}$ so long as $n_i\in \langle {x}_{0},{x}_1,\ldots,{x}_{q-k-1},{y}_0,{y}_1,\ldots,{y}_{k-1}\rangle\overline{S}$ for all $i$. Further, $(n_1\wedge\cdots\wedge n_t)\wedge \zeta$ will be non-zero provided that $n_1,n_2,\ldots,n_t,m_1,m_2,\ldots,m_{\delta}$ are unique in $\overline{S}$. As these are all monomials of bi-degree $\dd$ contained in $\langle {x}_{0},{x}_1,\ldots,{x}_{q-k-1},{y}_0,{y}_1,\ldots,{y}_{k-1}\rangle\overline{S}$ the number of such elements is controlled by the Hilbert function of this ideal. Using Lemma~\ref{lem:hilbert-function-bound} to compute the Hilbert function of $\langle {x}_{0},{x}_1,\ldots,{x}_{q-k-1},{y}_0,{y}_1,\ldots,{y}_{k-1}\rangle\overline{S}$ we see that we can construct a non-trivial class in $K_{\delta+t,q}^{\overline{R}}(\overline{S}(\bb;\dd))$ whenever
\begin{align*}
\delta+t\leq \HF\left(\dd, \langle {x}_{0},{x}_1,\ldots,{x}_{q-k-1},{y}_0,{y}_1,\ldots,{y}_{k-1}\rangle\overline{S}\right)&\geq \HF\left(\dd,\langle {x}_{0},{x}_1,\ldots,{x}_{q-k-1},{y}_0,{y}_1,\ldots,{y}_{k-1}\rangle S\right)-\HF\left(\dd,\R(\nn,\dd)\right)\\
&=r_{\nn,\dd}-\binom{d_1+n_1-(q-k)}{n_1-(q-k)}\binom{d_2+n_2-k}{n_2-k}-(|\nn|+1).
\end{align*}
\end{proof}

\begin{proof}[Proof of Theorem~\ref{thm:main}]
This follows immediately from Theorem~\ref{thm:main2} with $\bb=\zero$.
\end{proof}

\begin{proof}[Proof of Corollary~\ref{cor:main}]
By Theorem~\ref{thm:main} if $d_1>q$ and $d_2>q$ then 
\begin{align*}
\rho_q(\nn;\dd)\geq&1-\frac{\min\left\{\displaystyle \binom{d_1+n_1-i}{n_1-i}\binom{d_2+n_2-j}{n_2-j} \;\;\; \bigg| \;\;\; \begin{matrix} i+j=q \\ 0\leq i \leq n_1 \\ 0\leq j \leq n_2 \end{matrix}\right\}}{r_{\nn,\dd}}-\frac{\min\left\{\displaystyle \binom{d_1+i}{i}\binom{d_2+j}{j} \;\;\; \bigg| \;\;\; \begin{matrix}i+j=q \\ 0\leq i \leq n_1 \\ 0\leq j \leq n_2 \end{matrix}\right\}}{r_{\nn,\dd}} - \frac{|\nn|-q-1}{r_{\nn,\dd}}\\
\\
\geq&1-\sum_{\substack{i+j=q \\ 0\leq i \leq n_1 \\ 0\leq j \leq n_2}} \frac{\displaystyle \binom{d_1+n_1-i}{n_1-i}\binom{d_2+n_2-j}{n_2-j}}{r_{\nn,\dd}}+\frac{\displaystyle \binom{d_1+i}{i}\binom{d_2+j}{j}}{r_{\nn,\dd}}- \frac{|\nn|-q-1}{r_{\nn,\dd}}.
\end{align*}
The result follows by noting that $\binom{d+n}{n}=\frac{d^n}{n!}+O(d^{n-1})$ and $r_{\nn,\dd}=O(d_1^{n_1}d_2^{n_2})$.
\end{proof}

\begin{bibdiv}
\begin{biblist}

\bib{aprodu10}{book}{
   author={Aprodu, Marian},
   author={Nagel, Jan},
   title={Koszul cohomology and algebraic geometry},
   series={University Lecture Series},
   volume={52},
   publisher={American Mathematical Society, Providence, RI},
   date={2010},
   pages={viii+125},
   isbn={978-0-8218-4964-4},
   review={\MR{2573635}},
}

\bib{berkesch13}{article}{
   author={Berkesch Zamaere, Christine},
   author={Erman, Daniel},
   author={Kummini, Manoj},
   author={Sam, Steven V.},
   title={Tensor complexes: multilinear free resolutions constructed from
   higher tensors},
   journal={J. Eur. Math. Soc. (JEMS)},
   volume={15},
   date={2013},
   number={6},
   pages={2257--2295},
   issn={1435-9855},
   review={\MR{3120743}},
   doi={10.4171/JEMS/421},
}

\bib{berkesch17}{article}{
   author={Berkesch Zamaere, Christine},
   author={Erman, Daniel},
   author={Smith, Gregory G.},
   title={Virtual resolutions for a product of projective spaces},
   date={2017},
   note={ArXiv pre-print: \url{https://arxiv.org/abs/1703.07631}}
}

\bib{burnsHerzog93}{book}{
   author={Bruns, Winfried},
   author={Herzog, J\"urgen},
   title={Cohen-Macaulay rings},
   series={Cambridge Studies in Advanced Mathematics},
   volume={39},
   publisher={Cambridge University Press, Cambridge},
   date={1993},
   pages={xii+403},
   isbn={0-521-41068-1},
   review={\MR{1251956}},
}

\bib{conca18}{article}{
   author={Conca, Aldo},
   author={Juhnke-Kubitzke, Martina},
   author={Welker, Volkmar},
   title={Asymptotic syzygies of Stanley-Reisner rings of iterated
   subdivisions},
   journal={Trans. Amer. Math. Soc.},
   volume={370},
   date={2018},
   number={3},
   pages={1661--1691},
   issn={0002-9947},
   review={\MR{3739188}},
   doi={10.1090/tran/7149},
}

\bib{cox95}{article}{
   author={Cox, David A.},
   title={The homogeneous coordinate ring of a toric variety},
   journal={J. Algebraic Geom.},
   volume={4},
   date={1995},
   number={1},
   pages={17--50},
   issn={1056-3911},
   review={\MR{1299003}},
}
	
\bib{coxLittleSchenck11}{book}{
   author={Cox, David A.},
   author={Little, John B.},
   author={Schenck, Henry K.},
   title={Toric varieties},
   series={Graduate Studies in Mathematics},
   volume={124},
   publisher={American Mathematical Society, Providence, RI},
   date={2011},
   pages={xxiv+841},
   isbn={978-0-8218-4819-7},
   review={\MR{2810322}},
   doi={10.1090/gsm/124},
}

\bib{einErmanLazarsfeld15}{article}{
   author={Ein, Lawrence},
   author={Erman, Daniel},
   author={Lazarsfeld, Robert},
   title={Asymptotics of random Betti tables},
   journal={J. Reine Angew. Math.},
   volume={702},
   date={2015},
   pages={55--75},
   issn={0075-4102},
   review={\MR{3341466}},
   doi={10.1515/crelle-2013-0032},
}

\bib{einErmanLazarsfeld16}{article}{
   author={Ein, Lawrence},
   author={Erman, Daniel},
   author={Lazarsfeld, Robert},
   title={A quick proof of nonvanishing for asymptotic syzygies},
   journal={Algebr. Geom.},
   volume={3},
   date={2016},
   number={2},
   pages={211--222},
   issn={2214-2584},
   review={\MR{3477954}},
   doi={10.14231/AG-2016-010},
}

\bib{einLazarsfeld93}{article}{
   author={Ein, Lawrence},
   author={Lazarsfeld, Robert},
   title={Syzygies and Koszul cohomology of smooth projective varieties of
   arbitrary dimension},
   journal={Invent. Math.},
   volume={111},
   date={1993},
   number={1},
   pages={51--67},
}

\bib{einLazarsfeld12}{article}{
   author={Ein, Lawrence},
   author={Lazarsfeld, Robert},
   title={Asymptotic syzygies of algebraic varieties},
   journal={Invent. Math.},
   volume={190},
   date={2012},
   number={3},
   pages={603--646},
   issn={0020-9910},
   review={\MR{2995182}},
   doi={10.1007/s00222-012-0384-5},
}

\bib{eisenbud95}{book}{
   author={Eisenbud, David},
   title={Commutative algebra},
   series={Graduate Texts in Mathematics},
   volume={150},
   note={With a view toward algebraic geometry},
   publisher={Springer-Verlag, New York},
   date={1995},
   pages={xvi+785},
   isbn={0-387-94268-8},
   isbn={0-387-94269-6},
   review={\MR{1322960}},
   doi={10.1007/978-1-4612-5350-1},
}
		
\bib{eisenbud05}{book}{
   author={Eisenbud, David},
   title={The geometry of syzygies},
   series={Graduate Texts in Mathematics},
   volume={229},
   note={A second course in commutative algebra and algebraic geometry},
   publisher={Springer-Verlag, New York},
   date={2005},
   pages={xvi+243},
   isbn={0-387-22215-4},
   review={\MR{2103875}},
}

\bib{eisenbudSchreyer09}{article}{
   author={Eisenbud, David},
   author={Schreyer, Frank-Olaf},
   title={Betti numbers of graded modules and cohomology of vector bundles},
   journal={J. Amer. Math. Soc.},
   volume={22},
   date={2009},
   number={3},
   pages={859--888},
   issn={0894-0347},
   review={\MR{2505303}},
   doi={10.1090/S0894-0347-08-00620-6},
}

\bib{ermanYang18}{article}{
author = {Erman, Daniel},
author = {Yang, Jay},
doi = {10.2140/ant.2018.12.2151},
journal = {Algebra \& Number Theory},
number = {9},
pages = {2151--2166},
publisher = {MSP},
title = {Random flag complexes and asymptotic syzygies},
url = {https://doi.org/10.2140/ant.2018.12.2151},
volume = {12},
year = {2018},
}

\bib{green84-I}{article}{
   author={Green, Mark L.},
   title={Koszul cohomology and the geometry of projective varieties},
   journal={J. Differential Geom.},
   volume={19},
   date={1984},
   number={1},
   pages={125--171},
   issn={0022-040X},
   review={\MR{739785}},
}
		
\bib{green84-II}{article}{
   author={Green, Mark L.},
   title={Koszul cohomology and the geometry of projective varieties. II},
   journal={J. Differential Geom.},
   volume={20},
   date={1984},
   number={1},
   pages={279--289},
   issn={0022-040X},
   review={\MR{772134}},
}

\bib{hochster72}{article}{
   author={Hochster, M.},
   title={Rings of invariants of tori, Cohen-Macaulay rings generated by
   monomials, and polytopes},
   journal={Ann. of Math. (2)},
   volume={96},
   date={1972},
   pages={318--337},
   issn={0003-486X},
   review={\MR{0304376}},
   doi={10.2307/1970791},
}

\bib{twentyFourHours}{book}{
   author={Iyengar, Srikanth B.},
   author={Leuschke, Graham J.},
   author={Leykin, Anton},
   author={Miller, Claudia},
   author={Miller, Ezra},
   author={Singh, Anurag K.},
   author={Walther, Uli},
   title={Twenty-four hours of local cohomology},
   series={Graduate Studies in Mathematics},
   volume={87},
   publisher={American Mathematical Society, Providence, RI},
   date={2007},
   pages={xviii+282},
   isbn={978-0-8218-4126-6},
   review={\MR{2355715}},
   doi={10.1090/gsm/087},
}
		
\bib{lemmens18}{article}{
   author={Lemmens, Alexander},
   title={On the $n$-th row of the graded Betti table of an $n$-dimensional
   toric variety},
   journal={J. Algebraic Combin.},
   volume={47},
   date={2018},
   number={4},
   pages={561--584},
   issn={0925-9899},
   review={\MR{3813640}},
   doi={10.1007/s10801-017-0786-y},
}

\bib{M2}{misc}{
    label={M2},
    author={Grayson, Daniel~R.},
    author={Stillman, Michael~E.},
    title = {Macaulay 2, a software system for research
	    in algebraic geometry},
    note = {Available at \url{http://www.math.uiuc.edu/Macaulay2/}},
}

\bib{oeding17}{article}{
   author={Oeding, Luke},
   author={Raicu, Claudiu},
   author={Sam, Steven V},
   title={On the (non-)vanishing of syzygies of Segre embeddings},
   date={2017},
   note={ArXiv pre-print: \url{https://arxiv.org/abs/1708.03803}}
}

\bib{ottavianiPaoletti}{article}{
   author={Ottaviani, Giorgio},
   author={Paoletti, Raffaella},
   title={Syzygies of Veronese embeddings},
   journal={Compositio Math.},
   volume={125},
   date={2001},
   number={1},
   pages={31--37},
   issn={0010-437X},
   review={\MR{1818055}},
   doi={10.1023/A:1002662809474},
}

\bib{stacks-project}{misc}{
  author       = {The {Stacks project authors}},
  title        = {The Stacks project},
  howpublished = {\url{https://stacks.math.columbia.edu}},
  year         = {2019},
}

\end{biblist}
\end{bibdiv}
\end{document}